\documentclass[11pt]{article}
\usepackage[utf8]{inputenc}
\usepackage{amsmath, amssymb, amsthm}
\usepackage{tabularx}
\usepackage{enumerate,float,subcaption,xcolor,multirow}
\usepackage{subcaption}
\usepackage{hyperref,tikz, tikzpagenodes}
\usetikzlibrary{through,intersections}
\usepackage[sorting=nyt,sortcites=true,maxbibnames=9,autopunct=true,autolang=hyphen,hyperref=true,abbreviate=false, backend=biber]{biblatex}
\usepackage{mathrsfs}
\addbibresource{enumeration.bib}
\usepackage{tikz-cd}
\usepackage{comment}
\colorlet{ss}{black!20}
\colorlet{sslabel}{black!50}
\colorlet{backline}{black!10}

\newtheorem{theorem}{Theorem}
\newtheorem{corollary}[theorem]{Corollary}
\newtheorem{lemma}[theorem]{Lemma}

\newtheorem*{remark}{Remark}
\theoremstyle{definition}
\newtheorem{definition}{Definition}

\usepackage[left=30mm,right=30mm,top=20mm,bottom=20mm]{geometry}

\def\0{{\bf 0}}
\def\1{{\bf 1}}
\def\v{{\bf v}}
\def\x{{\bf x}}
\def\y{{\bf y}}
\def\z{{\bf z}}

\def\t{{\bf t}}
\def\q{{\bf q}}
\def\s{{\bf s}}
\def\c{{\bf c}}
\def\o{{\bf 0}}
\def\j{{\bf j}}

\newcommand{\R}{{\mathbb {R}}}

\newcommand{\GM}[1]{$\mathrm{GM}_{#1}$}
\newcommand{\WQH}[1]{$\mathrm{WQH}_{#1}$}
\newcommand{\AH}[1]{$\mathrm{AH}_{#1}$}
\newcommand{\aut}{\mathrm{Aut}}

\newcommand{\erase}[2][says]
\makeatletter
\newcommand{\rref}[2]{\hyperref[#2]{{#1}~\ref*{#2}}}

\usepackage{xcolor}
\usepackage{eurosym}
\title{Counting cospectral graphs obtained via switching}
\author{Aida Abiad\thanks{\texttt{a.abiad.monge@tue.nl}, Department of Mathematics and Computer Science, Eindhoven University of Technology, The Netherlands; Department of Mathematics and Data Science of Vrije Universiteit Brussel, Belgium} \and Nils van de Berg\thanks{\texttt{n.p.v.d.berg@tue.nl}, Department of Mathematics and Computer Science, Eindhoven University of Technology, The Netherlands} \and Robin Simoens\thanks{\texttt{Robin.Simoens@UGent.be},  Department of Mathematics: Analysis, Logic and Discrete Mathematics, Ghent University, Belgium; Department of Mathematics, Universitat Polit\`{e}cnica de Catalunya, Spain}}

\date{}

\begin{document}

\maketitle

\begin{abstract}
Switching is an operation on a graph that does not change the spectrum of the adjacency matrix, thus producing cospectral graphs. An important activity in the field of spectral graph theory is the characterization of graphs by their spectrum. Thus switching provides a tool for disproving the existence of such a characterization.

This paper presents a general framework for counting the number of graphs that have a non-isomorphic cospectral graph through a switching method, expanding on the work by Haemers and Spence [European Journal of Combinatorics, 2004]. Our  framework is based on a different counting approach, which allows it to be used for all known switching methods for the adjacency matrix. From this, we derive asymptotic results, which we complement with computer enumeration results for graphs up to 10 vertices.\\

\noindent \textbf{Keywords:} Graph,
Eigenvalue, Enumeration, Switching\\
\noindent \textbf{MCS:} 05C50

\end{abstract}

\section{Introduction}

One of the major open problems in spectral graph theory is Haemers' conjecture (2003), which states that almost all graphs are determined by their adjacency spectrum. More precisely, if $f(n)$ is the fraction of non-isomorphic graphs on $n$ vertices that are uniquely determined by the spectrum of the their adjacency matrix, then the conjecture says that $f(n)\rightarrow 1$ if
$n\rightarrow \infty$. This conjecture plays a special role in the famous graph isomorphism problem, for more details see \cite{which}. Moreover, computational evidence {in favor of the conjecture has been provided in \cite{cospectral12, level2mats}, but it remains far from being
solved. One reason is that it is not clear how to show (without some kind of complete enumeration) that
for a given graph there is no other non-isomorphic graph with the same spectrum (cospectral mate) \cite{Hconference2024}. As a consequence, not many graphs are known to be determined by their spectrum.
It has only been shown successfully for specific families of graphs, the largest such family consisting of specific unicyclic graphs \cite{exponential}. Wang et al. have produced a line of research that culminated in showing that an algebraic condition on the graph does imply that the graph is determined by its spectrum \cite{wangSimple}. 

A lot of attention has also gone to the other side; the construction of cospectral graphs. Two graphs are cospectral if they have the same spectrum and they are called cospectral mates if they are cospectral and non-isomorphic. Schwenk \cite{schwenktrees} made progress in this direction by showing that almost all trees are not determined by their spectrum by providing a construction to obtain cospectral trees. The construction of cospectral graphs does not only have relevance for the Haemers' conjecture, but it is also useful to prove that certain graph properties are not determined by the spectrum, like Hamiltonicity \cite{LIU2020199}, having a perfect matching \cite{blazsik2015cospectral}, the chromatic index for regular graphs \cite{YAN2022} or the zero forcing number \cite{abiadzf}. For general cospectral graphs, the most fruitful construction is the switching method developed by Godsil and McKay \cite{GMswitching} (GM for short). Switching is an operation on a graph that does not change the spectrum of the adjacency matrix. For this operation to work the graph needs a special structure, called a switching set. Since Godsil and McKay's seminal paper, more switching methods for the adjacency matrix have been found, such as Wang-Qiu-Hu (WQH) switching \cite{WQHswitching} and switching methods associated with regular orthogonal matrices of level 2 such as Abiad-Haemers (AH) switching \cite{AHswitching} and its recent extensions \cite{MaoSimilar,switchingpaper}.

In this paper, we exploit the common structure of these switching methods to present an asymptotic expression for counting cospectral graphs obtained via such a switching method. This count is applicable to all switching methods for the adjacency matrix. This expression is then calculated for all known irreducible switching methods up to size $8$. We also make the first comparative study of the frequency that the different switching methods occur in small graphs. Our paper can be considered a sequel to Haemers and Spence's article \cite{enumeration} on the enumeration of cospectral graphs by using Godsil-McKay switching, which in turn is also a sequel to the work of Godsil and McKay \cite{GMswitching}.

In \cite{enumeration} the authors obtained a lower bound on the number of graphs on $n$ vertices that have a cospectral mate. Let $g_n$ be the number of graphs on $n$ vertices up to isomorphism.

\begin{theorem}[{\cite[Theorem~3]{enumeration}}]\label{thm:gmcount}
     There are at least $n^3g_{n-1}(\frac1{24}-o(1))$ graphs on $n$ vertices with a cospectral mate.
\end{theorem}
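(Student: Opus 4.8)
The plan is to exhibit, for most $(n-1)$-vertex graphs $H$, a large family of $n$-vertex graphs built from $H$ that each admit a Godsil--McKay switching set, and then argue that only a bounded number of these can be isomorphic to one another. The natural construction: take $H$ on vertex set $\{1,\dots,n-1\}$, add a new vertex $v$, and join $v$ to some subset $S$ of $V(H)$. For $v$ together with three further vertices to form a GM-switching set of size $4$, we need the $4\times 4$ principal structure and the adjacency pattern to the rest of the graph to satisfy the Godsil--McKay condition: the switching set $C$ induces a regular subgraph, and every vertex outside $C$ is adjacent to $0$, $|C|/2=2$, or all $4$ vertices of $C$. So I would fix a $4$-element set $C = \{v, a, b, c\}$ with $a,b,c \in V(H)$, and count the ways to choose $H$, the triple $\{a,b,c\}$, and the neighbourhood of $v$ so that the GM condition holds.

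**Why we get the factor $n^3/24$.**

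The dominant contribution should come from choosing the three vertices $a,b,c$ inside the $(n-1)$-vertex graph: there are $\binom{n-1}{3} \sim n^3/6$ such triples. For a generic choice of $H$ and triple, I would arrange that $\{a,b,c\}$ together with $v$ forms a valid switching set by an appropriate choice of the edges among $a,b,c$, the edges from $v$ to $a,b,c$, and the edges from $v$ to $V(H)\setminus\{a,b,c\}$ — this last choice being forced (up to the switch itself) by requiring each remaining vertex to see $0$, $2$, or $4$ of the set. The switching operation then produces a graph that is, in the vast majority of cases, \emph{not} isomorphic to the original, so each such configuration genuinely contributes a graph with a cospectral mate. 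The remaining factor-of-$4$ loss (turning $\frac16$ into $\frac1{24}$) comes from overcounting: the same $n$-vertex graph with a cospectral mate may arise from up to a bounded number of these $(H, \{a,b,c\}, N(v))$ data, and a crude bound on that multiplicity, together with the $g_{n-1}$ choices for $H$, yields the stated $n^3 g_{n-1}(\frac1{24} - o(1))$.

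**Handling the error terms.**

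The $o(1)$ absorbs several losses that must be shown to be lower-order: (i) graphs $H$ for which the construction fails to yield a switching set for ``too many'' triples — one needs that for almost all $H$, a positive fraction (in fact almost all) of the $\binom{n-1}{3}$ triples work, which follows from the fact that a random graph has very rigid local structure; (ii) configurations where switching returns an isomorphic graph — here I would invoke that almost all graphs are asymmetric and that the switched graph differs from the original in a way detectable by, e.g., counting edges or degrees, so isomorphism is a rare coincidence; and (iii) the multiplicity with which a given target graph is produced, which must be bounded by an absolute constant (hence the clean $24$) or at least by $n^{o(1)}$. The main obstacle is step (ii) combined with the overcounting analysis in (iii): one has to be careful that distinct data $(H,\{a,b,c\},N(v))$ really do give distinct graphs-with-mates except in a negligible fraction of cases, and that the isomorphism classes are not collapsed by hidden symmetry. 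Controlling this cleanly — rather than just up to an unspecified constant — is what pins down the exact denominator $24$, and I expect the bulk of the work to lie there; the asymptotic enumeration of the switching configurations themselves is comparatively routine counting.
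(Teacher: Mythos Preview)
Your outline is essentially the original Haemers--Spence argument: build an $n$-vertex graph from a graph $H$ on $n-1$ vertices by adding a vertex $v$, choose a triple $\{a,b,c\}\subseteq V(H)$, and force $\{v,a,b,c\}$ to be a \GM4-switching set by selecting the edges from $v$ appropriately. The paper explicitly notes that this is how the cited result was obtained (``the proof for \GM4-switching as sketched by Haemers and Spence constructed a graph with \GM4-switching from a graph on $n-1$ vertices''), so as a sketch of \emph{that} proof your proposal is on the right track, though your justification of the exact constant $\tfrac{1}{24}$ via a ``factor-of-$4$ overcounting'' is too vague to be a proof as written.

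However, this paper does \emph{not} reprove the theorem that way. It instead derives the bound as a special case of its general framework (Theorem~\ref{thm:generalasymp}): one builds graphs from $g_{n-4}$ graphs on $n-4$ vertices (the complement of the switching set), attaches each outside vertex via one of the $|\mathcal{V}_Q|=8$ admissible $Q$-respecting vectors, and sums $\tfrac{1}{|\aut_Q(\Gamma)|}$ over the four regular graphs $\Gamma$ on four vertices to obtain $\tfrac13\cdot 8^{n-4}g_{n-4}(1+o(1))=\tfrac{1}{24}n^3g_{n-1}(1+o(1))$. The advantage of the paper's route is twofold: it yields a matching upper bound (not just the lower bound you are sketching), and it generalises uniformly to every $(Q,\Gamma)$-switching, whereas the $n-1$-vertex construction you describe exploits features specific to \GM4 and does not extend to, e.g., \WQH6 or \GM6 (the paper remarks on this explicitly). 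Your approach buys a more concrete, hands-on argument for the one case; the paper's buys generality and tightness.
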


The bound in Theorem~\ref{thm:gmcount} was derived by counting the number of cospectral mates by GM-switching on a switching set of size $4$. Therefore, it is also a lower bound on the number of graphs which have a cospectral mate via \GM4-switching.  

Using a different approach we prove a general method to count the number of cospectral graphs that are obtained by a switching method for the adjacency matrix, see Theorem \ref{thm:generalasymp}. The result contains some parameters of the orthogonal matrix $Q$ related to the switching method and $\Gamma$, the induced subgraph on the switching set. It also uses some new terminology which is introduced in Section~\ref{sec:prelimswitching}. 

\begin{theorem}\label{thm:generalasymp}
    If $Q$ is an $m\times m$ orthogonal matrix such that $(Q,\Gamma)$-switching is distinguishing and produces at least one cospectral mate, then there are 
   \begin{equation}
       \frac{1}{|\aut_Q(\Gamma)|}|\mathcal{V}_Q|^{n-m}g_{n-m}(1+o(1))
   \end{equation}
    graphs on $n$ vertices that have a cospectral mate through $(Q,\Gamma)$-switching.
\end{theorem}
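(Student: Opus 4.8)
The plan is to count \emph{configurations}: pairs $(G,S)$ in which $G$ is a graph on the labeled vertex set $\{1,\dots,n\}$ and $S$ is a $(Q,\Gamma)$-switching set of $G$. From this I would deduce the theorem in three moves: (i) evaluate the number of configurations exactly; (ii) pass from labeled configurations to isomorphism classes of graphs, which already yields the upper bound, since any graph counted by the theorem carries at least one switching set; (iii) show that for almost every configuration the switched graph $G^S$ is genuinely non-isomorphic to $G$ and that $G$ has essentially only that one switching set, which upgrades the upper bound to an equality. The hypotheses that $(Q,\Gamma)$-switching is distinguishing and produces at least one cospectral mate enter only in move (iii).

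For move (i), the point is that ``$S$ is a switching set'' splits into independent local conditions. The induced graph $G[S]$ must be a copy of $\Gamma$ compatible with $Q$; and, since switching acts on the neighbourhood-on-$S$ vector $\v\in\{0,1\}^m$ of each outside vertex by $\v\mapsto Q\v$ (up to the convention fixed in Section~\ref{sec:prelimswitching}), each of the $n-m$ vertices outside $S$ must have its neighbourhood pattern on $S$ lie in $\mathcal V_Q=\{\v\in\{0,1\}^m:\ Q\v\in\{0,1\}^m\}$. Counting an ordered placement of $\Gamma$ on an $m$-subset, an arbitrary graph on the other $n-m$ vertices, and one attachment vector from $\mathcal V_Q$ for each such vertex, and then dividing by the $|\aut_Q(\Gamma)|$ re-orderings that produce the same structure, I get
\[
   \#\{\text{configurations on }n\text{ vertices}\}=\frac{n!}{(n-m)!\,|\aut_Q(\Gamma)|}\,|\mathcal V_Q|^{\,n-m}\,2^{\binom{n-m}{2}}.
\]
Dividing by $n!$ and using that almost all graphs are asymmetric together with $g_k\sim 2^{\binom k2}/k!$, the number of isomorphism classes of configurations is $\tfrac{1}{|\aut_Q(\Gamma)|}|\mathcal V_Q|^{n-m}g_{n-m}(1+o(1))$, and hence so is an upper bound for the quantity in the theorem.

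For the matching lower bound in move (iii) I would control two error terms. First, almost every configuration $(G,S)$ has $S$ as the \emph{unique} switching set of $G$: a second-moment estimate shows that for distinct candidate sets $S\ne S'$ the events ``$S$ is a switching set'' and ``$S'$ is a switching set'' are, up to lower-order factors, independent for a uniformly random $G$, so the expected number of ordered pairs of distinct switching sets is of smaller order than the expected number of switching sets; consequently the number of graphs carrying a switching set is asymptotic to the number of configuration classes. Second, almost every configuration has $G^S\not\cong G$: an isomorphism $G\to G^S$ that fixes $S$ setwise forces every attachment vector $\v$ to satisfy $Q\v=P_\pi\v$ for a single $\pi\in\aut_Q(\Gamma)$ (the identity when $G^S=G$), where $P_\pi$ is the permutation matrix of $\pi$; but the hypothesis that $(Q,\Gamma)$-switching produces at least one cospectral mate says precisely that, for every such $\pi$, the set $\{\v\in\mathcal V_Q:\ Q\v=P_\pi\v\}$ is a \emph{proper} subset of $\mathcal V_Q$, so the fraction of configurations all of whose $n-m$ attachment vectors lie in it decays like $c^{\,n-m}$ with $c<1$; summing over the finitely many $\pi$ this is negligible, and the isomorphisms that move $S$ are ruled out in the same way after invoking that $(Q,\Gamma)$-switching is distinguishing and that $S$ is typically unique. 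Combining the two bounds then gives the claimed asymptotics.

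I expect the main obstacle to be making the error estimates in move (iii), and the passage to isomorphism classes in move (ii), quantitatively tight enough. Because $\mathcal V_Q$ is a proper subset of $\{0,1\}^m$, the configuration count $|\mathcal V_Q|^{n-m}2^{\binom{n-m}{2}}$ is exponentially smaller than $2^{\binom n2}$, so the standard fact that almost all $n$-vertex graphs are asymmetric is not by itself strong enough: one must combine automorphism-invariance (or a repeated switching set, or a trivial switch) with the $n-m$ constraints defining a switching set to see that these degenerate configurations form an $o\big(|\mathcal V_Q|^{n-m}2^{\binom{n-m}{2}}\big)$ fraction. Pinning down exactly how \emph{distinguishing} is used to handle isomorphisms $G\to G^S$ that do not preserve $S$ is, I expect, the most delicate part.
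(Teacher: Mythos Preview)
Your approach is the same in spirit as the paper's---count configurations, then argue that generically the switching set is unique and the switch is non-trivial---but the execution diverges at exactly the point you flag as the main obstacle, and the paper's route around it is different from what you propose.

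The paper does \emph{not} count labeled $n$-vertex graphs and divide by $n!$. Instead it fixes the switching set to be $[m]$ and parametrizes by (a canonical representative $C\in\mathcal A_{n-m}$ of the isomorphism class of the outside graph) $\times$ (the attachment matrix $V\in\mathcal V_Q^{\,n-m}$). This gives the set $\mathcal T_n$ with $|\mathcal T_n|=|\mathcal V_Q|^{n-m}g_{n-m}$ exactly. The advantage is that the relevant asymmetry is now that of the $(n-m)$-vertex graph $C$, for which the classical ``almost all graphs are asymmetric'' applies verbatim; you never need to know that almost all graphs \emph{carrying a switching set} are asymmetric. Your division by $n!$ only gives a \emph{lower} bound on the number of configuration classes, so the chain ``theorem's quantity $\le$ \#configuration classes $\approx$ \#labeled$/n!$'' does not close without that extra asymmetry fact.

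You also misplace where ``distinguishing'' is used. It is needed for the \emph{upper} bound: restricting to the subset $\mathcal S_n\subset\mathcal T_n$ where $C$ is asymmetric and every vector of $\mathcal V_Q$ occurs as a column, the group $\aut_Q(\Gamma)$ acts by permuting the first $m$ rows/columns, and distinguishing says this action is \emph{free} on $\mathcal S_n$. Hence every isomorphism class meeting $\mathcal S_n$ has at least $|\aut_Q(\Gamma)|$ representatives there, yielding the factor $1/|\aut_Q(\Gamma)|$.

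For the lower bound the paper does not use a second-moment estimate. The key device is a deterministic ``unique extension'' lemma: if $Y$ is any $m$-tuple with some $y\in Y\setminus[m]$, then the entire $y$-row of the adjacency matrix is forced once the rest is specified (because $Q$ has no integral column). This immediately bounds the number of matrices in $\mathcal S_n$ admitting a second switching tuple by $m\cdot m!\binom{n}{m}|\mathcal S_{n-1}|=o(|\mathcal S_n|)$, and the same argument bounds the \emph{exceptional} matrices---those whose \emph{switched} graph has a $(Q,\Gamma)$-switching set other than $[m]$. It is this non-exceptionality, not uniqueness of $S$ in $G$, that rules out isomorphisms $G\to G^S$ moving $S$: such an isomorphism would carry $[m]$ to a second switching set of $G^S$. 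Once the isomorphism is forced to fix $[m]$ setwise and to be the identity on the asymmetric outside part, it is a permutation of $[m]$ that simultaneously sends $A(\Gamma)$ to $Q^TA(\Gamma)Q$ and each $\v\in\mathcal V_Q$ to $Q^T\v$; the existence of a single cospectral mate (switching-distinguishing) forbids this.
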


It should be noted that $o(1)$ is related to $n$ going to infinity, but $m$ should remain constant and $o(1)$ depends on this constant.

Theorem~\ref{thm:generalasymp} generalizes Theorem~\ref{thm:gmcount} and gives an asymptotically matching upper bound. Our proof takes a different approach than the one by Haemers and Spence, because their proof relies on the specifics of \GM4-switching, which do not extend to all other switching methods.  Theorem~\ref{thm:generalasymp} applies to every switching method that is \emph{distinguishing}, see Definition~\ref{def:distinguishing}, which is a condition that is satisfied by all currently known switching methods for the adjacency matrix (\GM4, \GM6, \GM8, \WQH6, \WQH8, \AH6 and Fano switching).
Concerning the enumeration of specific switching methods, Theorem~\ref{thm:generalasymp} gives an asymptotically tight bound.

This paper is structured as follows. In Section~\ref{sec:prelim}, we introduce the definitions needed to understand the statement and proof of Theorem~\ref{thm:generalasymp}. We also provide an overview of results and switching methods from the literature. Theorem~\ref{thm:generalasymp} is proven in Section~\ref{sec:thmproof}, and it is applied to several switching methods in Section~\ref{sec:consequences}. In Section~\ref{sec:computer}, we provide computer results for switching in small graphs, complementing the asymptotic results for large graphs that follow from Theorem~\ref{thm:generalasymp}. Finally, in Section~\ref{sec:conclusion}, we discuss the possibility of generalizing our main result and its use for obtaining new insights towards Haemers' conjecture.

\section{Preliminaries}\label{sec:prelim}

In this paper, a graph $G = (V,E)$ is considered simple and loopless, with vertex set $V = [n] := \{1,\dots,n\}$. The graphs are used interchangeably with their \emph{adjacency matrix}, the symmetric matrix $A = (a_{ij})$ where $a_{ij} = 1$ if and only if there is an edge joining $i$ and $j$, and $0$ otherwise. The \emph{spectrum} of a graph is the multiset of eigenvalues of its adjacency matrix. Graphs are \emph{cospectral} if they have the same spectrum. If two  cospectral graphs are non-isomorphic, they are \emph{ cospectral mates}. A graph is determined by its spectrum if it has no cospectral mates. In this paper, we study graphs  that are not determined by their spectrum by enumerating the graphs that have a cospectral mate.

If two graphs are cospectral, then there is an orthogonal matrix $Q$ such that their adjacency matrices $A$ and $A'$ satisfy $Q^TAQ = A'$. If this matrix $Q$ is an (0,1)-matrix, then it is a permutation matrix and the graphs are isomorphic. An orthogonal matrix is \emph{regular} if it has constant row sum. We say that two matrices $U,V$ are \emph{equivalent} if there are permutation matrices $P,R$ such that $U = P^TVR$. Equivalent orthogonal matrices produce the same pair of cospectral graphs up to isomorphism. The \emph{level} of an orthogonal matrix Q is the smallest positive integer \(\ell\) such that \(\ell Q\) is an integral matrix, or \(\infty\) if $Q$ has irrational entries. A matrix is \emph{decomposable} if it is equivalent to a non-trivial block-diagonal matrix; otherwise, it is \emph{indecomposable}. For example, the adjacency matrix of a disconnected graph is decomposable. The matrices in this paper are often described as a block matrix consisting of smaller matrices. These smaller $n\times n$ matrices consist of the identity matrix $I_n$, the all-zeros matrix $O_n$, the all-ones matrix $J_n$ and $Y_n=nI_n-J_n$. The columns of $J_n$ are the all-ones vector $\j_n$. If the size is clear, we omit the subscript. 

If $A+rJ$ and $A'+rJ$ have the same spectrum for all real numbers $r$, then $A$ and $A'$ (and the associated graphs) are \emph{$\R$-cospectral}. Johnson and Newman \cite{rcospectral} showed that $A$ and $A'$ are $\R$-cospectral if and only if there is a regular rational orthogonal $Q$ such that $Q^TAQ = A'$. Furthermore, they showed that this is equivalent to the graphs being \emph{generalized cospectral}, that is, being cospectral and having cospectral complements.

\subsection{Enumeration}

 The enumeration of graphs with a cospectral mate relies on the enumeration of graphs up to isomorphism. A graph is \emph{asymmetric} if it has no non-trivial automorphism. We use the following known results:

\begin{theorem}[{\cite[Theorem~2]{asymmetricgraphs}}]\label{thm:asymmetric}
    Almost all graphs are asymmetric. This means that the proportion of asymmetric graphs on $n$ vertices goes to $1$ as $n \to \infty$.
\end{theorem}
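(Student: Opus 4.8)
The plan is to reconstruct the classical probabilistic argument of Erdős and Rényi. I would work first in the labeled model: equip the set of all $2^{\binom n2}$ labeled graphs on vertex set $[n]$ with the uniform distribution, and aim to show that the probability a uniformly random labeled graph $G$ admits a non-identity automorphism tends to $0$. The central quantity is
\[
T_n := \sum_{\substack{\pi \in S_n \\ \pi \neq \mathrm{id}}} 2^{\,c(\pi) - \binom n2},
\]
where $c(\pi)$ denotes the number of orbits of $\pi$ acting on the $\binom n2$ unordered pairs of vertices. The key observation is that a fixed permutation $\pi$ is an automorphism of $G$ precisely when every such orbit is \emph{monochromatic} (all its pairs are edges, or all are non-edges), so $\Pr[\pi \in \aut(G)] = 2^{c(\pi)-\binom n2}$, and hence by the union bound $\Pr[\,|\aut(G)|>1\,] \le T_n$. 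I would then prove $T_n \to 0$ and transfer the conclusion to the unlabeled count $g_n$.

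For the transfer, Burnside's lemma gives $g_n = \frac{1}{n!}\sum_{\pi}2^{c(\pi)} = \frac{2^{\binom n2}}{n!}(1+T_n)$, since $2^{c(\pi)}$ counts the labeled graphs fixed by $\pi$. On the other hand the number of labeled asymmetric graphs is $2^{\binom n2}\big(1-\Pr[\,|\aut(G)|>1\,]\big)\ge 2^{\binom n2}(1-T_n)$, and as each asymmetric isomorphism class has exactly $n!$ labelings, the number $a_n$ of asymmetric graphs up to isomorphism satisfies $a_n \ge \frac{2^{\binom n2}}{n!}(1-T_n)$. Comparing the two estimates yields $a_n/g_n \ge (1-T_n)/(1+T_n)$, which tends to $1$ as soon as $T_n \to 0$. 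So the whole theorem reduces to the single estimate $T_n \to 0$.

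The heart of the argument is a uniform lower bound on the \emph{defect} $d(\pi):=\binom n2 - c(\pi)=\sum_O(|O|-1)$, the sum running over the orbits $O$ of $\pi$ on pairs. I would show that if $\pi$ moves exactly $m\ge 2$ vertices (call this set $S$), then $d(\pi)\ge \tfrac14 m(n-2)$. To see this, split the pairs by how they meet $S$: each of the $m(n-m)$ pairs $\{a,c\}$ with $a\in S$ moved and $c\notin S$ fixed lies in an orbit of size $\ge 2$, since $\{a,c\}\mapsto\{\pi(a),c\}\neq\{a,c\}$; and among the $\binom m2$ pairs inside $S$ at most $\lfloor m/2\rfloor$ can be fixed, because a fixed pair $\{a,b\}\subseteq S$ forces $\pi(a)=b,\pi(b)=a$, and the derangement $\pi|_S$ has at most $m/2$ two-cycles. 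Since every nontrivial orbit of size $k$ contributes $k-1\ge k/2$ to $d(\pi)$, this gives $d(\pi)\ge \tfrac12\big(m(n-m)+\binom m2-\lfloor m/2\rfloor\big)$, which simplifies (using $m\le n$) to $\tfrac12 m\big(n-\tfrac m2-1\big)\ge \tfrac14 m(n-2)$. The both-endpoints-in-$S$ term is exactly what keeps this bound from degenerating when $m$ is close to $n$ and $\pi$ has few or no fixed points; handling that regime is the step I expect to require the most care, as the naive ``one moved, one fixed'' count alone is useless there.

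Finally I would feed this into the union bound. The number of permutations of $[n]$ moving exactly $m$ points is at most $\binom nm m!\le n^m$, so
\[
T_n \le \sum_{m=2}^{n} n^m\, 2^{-m(n-2)/4}=\sum_{m=2}^{n}\big(n\,2^{-(n-2)/4}\big)^m .
\]
Writing $q_n:=n\,2^{-(n-2)/4}$, we have $q_n\to 0$, so for large $n$ we get $q_n<\tfrac12$ and the geometric tail is bounded by $q_n^2/(1-q_n)\le 2q_n^2 = 2n^2\,2^{-(n-2)/2}\to 0$. Hence $T_n\to 0$, and by the two preceding paragraphs $a_n/g_n\to 1$, i.e.\ the proportion of asymmetric graphs on $n$ vertices tends to $1$.
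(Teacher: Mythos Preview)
Your argument is correct: the union-bound quantity $T_n$, the defect estimate $d(\pi)\ge\tfrac14 m(n-2)$ via the split into pairs with one or two moved endpoints, and the geometric-series tail all go through as written, and the Burnside transfer from the labeled to the unlabeled count is clean. This is exactly the classical Erd\H{o}s--R\'enyi probabilistic proof.

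There is nothing to compare against, however: the paper does not prove this statement. Theorem~\ref{thm:asymmetric} is quoted from the literature (the reference \texttt{asymmetricgraphs}) and used as a black box, chiefly inside Lemma~\ref{lem:sizeS} to conclude that almost all choices of the block $C$ give an asymmetric graph. So your write-up supplies a proof where the paper simply invokes a citation; as far as the paper's own contribution is concerned, no proof of this theorem is expected.
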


\begin{corollary}[\cite{oberschelp67}]\label{cor:nonisomgraphs}
    There are $\frac{1}{n!}2^{\binom{n}{2}}(1 + o(1))$ non-isomorphic graphs on $n$ vertices.
\end{corollary}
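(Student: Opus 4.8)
The plan is to compare the number of \emph{labeled} graphs on $n$ vertices, which is exactly $2^{\binom{n}{2}}$, with the number $g_n$ of isomorphism classes, using the orbit–stabilizer relation. The symmetric group $S_n$ acts on the set of labeled graphs on $[n]$ by relabeling vertices; the orbits of this action are precisely the isomorphism classes, and the stabilizer of a labeled graph $G$ is exactly $\aut(G)$. Hence the orbit of $G$ has size $n!/|\aut(G)|$, and summing over a set of representatives of the $g_n$ isomorphism classes yields
\begin{equation}
  2^{\binom{n}{2}} \;=\; \sum_{G} \frac{n!}{|\aut(G)|}.
\end{equation}

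From this identity I would extract a two-sided bound on $\tfrac{1}{n!}2^{\binom{n}{2}}$. Since $|\aut(G)|\ge 1$ for every $G$, each summand is at most $n!$, which gives $\tfrac{1}{n!}2^{\binom{n}{2}} \le g_n$. For the matching lower bound, let $a_n$ denote the number of asymmetric graphs on $n$ vertices up to isomorphism; each such graph contributes a full summand $n!$ (and all summands are positive), so $\tfrac{1}{n!}2^{\binom{n}{2}} \ge a_n$. Combining,
\begin{equation}
  a_n \;\le\; \frac{1}{n!}\,2^{\binom{n}{2}} \;\le\; g_n .
\end{equation}

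The final step is to invoke Theorem~\ref{thm:asymmetric}: almost all graphs are asymmetric, i.e. $a_n/g_n \to 1$ as $n\to\infty$, so $a_n = (1-o(1))g_n$. Dividing the displayed chain of inequalities by $g_n$ gives $1-o(1) \le \tfrac{1}{n!\,g_n}2^{\binom{n}{2}} \le 1$, so the ratio tends to $1$; equivalently $g_n = \tfrac{1}{n!}2^{\binom{n}{2}}(1+o(1))$, which is the claim.

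I do not expect a genuine obstacle here once Theorem~\ref{thm:asymmetric} is granted: the orbit–stabilizer identity is standard and the rest is a squeeze argument. The only point requiring a little care is the bookkeeping of the error term — one must observe that $a_n \le \tfrac{1}{n!}2^{\binom{n}{2}} \le g_n$ together with $a_n/g_n \to 1$ forces $g_n \big/\big(\tfrac{1}{n!}2^{\binom{n}{2}}\big)$ to tend to $1$, which is exactly the asserted asymptotic equality. (One could instead cite Oberschelp's original asymptotic expansion directly, but deriving the statement from Theorem~\ref{thm:asymmetric} keeps the argument self-contained.)
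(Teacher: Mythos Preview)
Your argument is correct: the orbit--stabilizer identity gives the two-sided bound $a_n \le \tfrac{1}{n!}2^{\binom{n}{2}} \le g_n$, and Theorem~\ref{thm:asymmetric} squeezes the ratio to $1$. The paper does not give its own proof but simply cites the result as a corollary of Theorem~\ref{thm:asymmetric}, which is precisely the dependence your derivation makes explicit.
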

Let \(g_n\) be the number of 
graphs on \(n\) (unlabelled) vertices, that is, \(g_n=\frac{1}{n!}2^{\binom{n}{2}}(1 + o(1))\). 

 The approach in \cite{enumeration} for enumerating graphs with a cospectral mate can be generalized to all switching methods for the adjacency matrix. 

\subsection{Overview of switching methods}\label{sec:switching}
In this section, we outline the different switching methods that are considered in this paper. We focus on switching methods for the adjacency matrix (\cite{GMswitching, WQHswitching, AHswitching, MaoSimilar, switchingpaper}), and each of such methods is associated with a regular orthogonal matrix, so by \cite{rcospectral} we know that they create graphs that are $\mathbb{R}$-cospectral (generalized cospectral). Hence the same switching method can always be applied to the complement of a graph. Moreover, all the switching methods we consider are associated with an indecomposable orthogonal matrix.

GM- and WQH-switching have found applications for finding new strongly regular graphs \cite{ihringer2019new, barwick} and for showing that distance regularity is not determined by the spectrum \cite{VANDAM20061805}.

\begin{theorem}[Godsil-McKay switching \cite{GMswitching}]
    In a graph $G = (V,E)$, a set of $2k$ vertices $X$ forms a GM-switching set if
    \begin{itemize}
        \item the induced subgraph $G[X]$ is regular,
        \item each vertex $v$ in $V\setminus X$ has $0,k$ or $2k$ neighbors in $X$.
    \end{itemize}
    For every vertex $v$ outside the switching set with $k$ neighbors in $X$ and every vertex $x$ in $X$ change the adjacency. This creates a new graph $G'$ which is cospectral with $G$.
\end{theorem}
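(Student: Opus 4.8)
The plan is to produce an orthogonal matrix $Q$ with $Q^TAQ$ equal to the adjacency matrix of $G'$; since conjugation by an orthogonal matrix preserves eigenvalues, cospectrality of $G$ and $G'$ follows immediately, and the only real work is checking that this conjugate is again the $(0,1)$ adjacency matrix of the switched graph.

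First I would fix the block structure. Reorder the vertices so that $X=\{1,\dots,2k\}$ comes first, and write
\[
A=\begin{pmatrix} B & N \\ N^T & C\end{pmatrix},
\]
where $B$ is the adjacency matrix of the $d$-regular graph $G[X]$, the matrix $C$ is the adjacency matrix of $G[V\setminus X]$, and each column $\mathbf{u}$ of $N$ is the characteristic vector inside $X$ of the neighbourhood of the corresponding vertex $v\in V\setminus X$. Set
\[
Q=\begin{pmatrix} Q_X & O \\ O & I\end{pmatrix},\qquad Q_X=\tfrac1k J_{2k}-I_{2k}.
\]
Using $J_{2k}^2=2kJ_{2k}$ one checks $Q_X^2=\tfrac1{k^2}(2k)J_{2k}-\tfrac2kJ_{2k}+I_{2k}=I_{2k}$, and $Q_X$ is symmetric, so $Q$ is an orthogonal involution; it also has constant row sum $\tfrac1k\cdot 2k-1=1$, hence is regular.

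Second I would compute
\[
Q^TAQ=\begin{pmatrix} Q_XBQ_X & Q_XN \\ N^TQ_X & C\end{pmatrix}
\]
one block at a time. For the top-left block, $d$-regularity of $G[X]$ gives $BJ_{2k}=J_{2k}B=dJ_{2k}$, and expanding $Q_XBQ_X=(\tfrac1kJ-I)B(\tfrac1kJ-I)$ while using this together with $J^2=2kJ$ collapses everything to $Q_XBQ_X=B$; so the edges inside $X$ are untouched. For the off-diagonal block, apply $Q_X$ to a single column $\mathbf{u}$ of $N$: the second hypothesis says $J_{2k}\mathbf{u}\in\{\mathbf{0},\,k\j,\,2k\j\}$ according as the corresponding $v$ has $0$, $k$ or $2k$ neighbours in $X$, hence $Q_X\mathbf{u}=\tfrac1kJ\mathbf{u}-\mathbf{u}$ equals $\mathbf{0}$, $\j-\mathbf{u}$, or $\j$ respectively. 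In other words $Q_XN$ leaves the $X$-adjacency of every outside vertex with $0$ or $2k$ neighbours in $X$ unchanged and replaces that of every outside vertex with exactly $k$ neighbours by its complement --- which is precisely the switch described in the statement. Thus $Q^TAQ$ is a symmetric $(0,1)$-matrix with zero diagonal, namely the adjacency matrix of $G'$, and since $Q$ is orthogonal, $G'$ and $G$ have the same spectrum.

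All of this is routine; the content is entirely in the choice $Q_X=\tfrac1kJ_{2k}-I_{2k}$ and in recognising that the two bullet conditions are exactly what force the products $Q_XBQ_X$ and $Q_XN$ back into $(0,1)$ form. The only thing to watch is careful bookkeeping with the block multiplications and with the factor $2k$ coming out of $J_{2k}^2$; there is no genuine obstacle.
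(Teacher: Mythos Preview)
Your proof is correct and is the standard argument: the paper does not actually prove this theorem but cites it from \cite{GMswitching}, and the matrix you use, $Q_X=\tfrac1kJ_{2k}-I_{2k}$, is precisely the matrix the paper writes as $Q_{\mathrm{GM}_{2k}}=\tfrac1k\bigl(\begin{smallmatrix}-Y_k&J_k\\J_k&-Y_k\end{smallmatrix}\bigr)$ (recall $Y_k=kI_k-J_k$). Your block computations are accurate and nothing more is needed.
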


A version of GM-switching can also be applied to the Laplacian matrix, but in this paper we only focus on the adjacency matrix.

\begin{theorem}[Wang-Qiu-Hu switching \cite{WQHswitching}]
    In a graph $G = (V,E)$ two disjoint sets $C_0,C_1$ of $k$ vertices form a WQH-switching set if
    \begin{itemize}
        \item For each $i \in \{0,1\}$ and vertex $v \in C_i$ the difference $\deg(v,C_{1-i}) - \deg(v, C_i)$ has the same value. 
        \item Any vertex $v$ in $V\setminus (C_0\cup C_1)$ has the same number of neighbors modulo $k$ in $C_0$ and $C_1$.
    \end{itemize}
    For any vertex $v$ outside the switching set with $\{\deg(v,C_0), \deg(v,C_1)\} = \{0,k\}$ switch the adjacencies between $v$ and $C_0\cup C_1$. This creates a new graph $G'$ which is cospectral with $G$.
\end{theorem}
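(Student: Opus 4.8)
The plan is to prove cospectrality the way every adjacency switching method is ultimately justified: by exhibiting a single orthogonal matrix $\hat Q$ with $\hat Q^T A\hat Q=A'$, where $A,A'$ are the adjacency matrices of $G$ and $G'$. Since orthogonal similarity preserves the spectrum, this at once gives that $G$ and $G'$ are cospectral. Concretely, I would order the vertices so that $C_0$ occupies the first $k$ coordinates, $C_1$ the next $k$, and $R:=V\setminus(C_0\cup C_1)$ the remaining $n-2k$, set
\[
 Q=\frac1k\begin{pmatrix} Y_k & J_k\\ J_k & Y_k\end{pmatrix},\qquad \hat Q=\begin{pmatrix} Q & O\\ O & I_{n-2k}\end{pmatrix},
\]
and partition $A=\begin{pmatrix} A_X & B\\ B^T & A_R\end{pmatrix}$ accordingly, where $X=C_0\cup C_1$. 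First I would record the elementary identities $Y_k^2+J_k^2=k^2 I_k$ and $Y_kJ_k=J_kY_k=O$, from which $Q^2=I_{2k}$ and $Q\j=\j$ follow; hence $Q$ is a symmetric, regular, orthogonal involution and $\hat Q$ is orthogonal. It then remains to check that $\hat Q^T A\hat Q=\begin{pmatrix} QA_XQ & QB\\ B^TQ & A_R\end{pmatrix}$ is exactly the adjacency matrix of $G'$.

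The heart of the argument is the off-diagonal block $QB$, which encodes the new adjacencies between $X$ and $R$. For an outside vertex $v\in R$ with adjacency vector $\beta=\binom{b_0}{b_1}$ (where $b_i$ lists the adjacencies of $v$ to $C_i$) and degrees $n_i=\j^T b_i=\deg(v,C_i)$, a direct computation using $J_k b_i=n_i\j$ gives
\[
 Q\beta=\begin{pmatrix} b_0+\tfrac{n_1-n_0}{k}\,\j\\[2pt] b_1+\tfrac{n_0-n_1}{k}\,\j\end{pmatrix}.
\]
Writing $\delta=(n_0-n_1)/k$, I would first note that $Q\beta\in\{0,1\}^{2k}$ forces $\delta\in\mathbb Z$, i.e.\ $n_0\equiv n_1\pmod k$, which is exactly the second WQH condition. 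Conversely, since $0\le n_i\le k$ forces $\delta\in\{-1,0,1\}$, a short case analysis then shows this condition is precisely what makes $QB$ a valid $(0,1)$-block and pins down the switch: $\delta=0$ (equivalently $n_0=n_1$) leaves $\beta$ unchanged, while $\delta=\pm1$ forces $\{n_0,n_1\}=\{0,k\}$ and sends $\binom{\j}{\0}\mapsto\binom{\0}{\j}$ and $\binom{\0}{\j}\mapsto\binom{\j}{\0}$. This is exactly the prescribed operation — complementing the adjacencies to $C_0\cup C_1$ of the $\{0,k\}$-vertices and fixing all others — so $QB$ is the between-block of $G'$.

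What remains is the diagonal block $QA_XQ$, which should reproduce the unchanged induced subgraph $A_X$; since $Q^2=I$, this is equivalent to $Q$ commuting with $A_X$. Expanding $QA_X-A_XQ$ in $2\times2$ block form and repeatedly using $Y_k=kI_k-J_k$ together with $MJ_k=(M\j)\j^T$, each block collapses to a sum of terms of the form $u\,\j^T-\j\,u^T$, where $u$ is a vector of differences $\deg(\cdot,C_1)-\deg(\cdot,C_0)$ on $C_0$ (respectively $\deg(\cdot,C_0)-\deg(\cdot,C_1)$ on $C_1$); the first WQH condition says exactly that these difference vectors are constant, so each $u$ is a multiple of $\j$ and every such term vanishes. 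I expect the main obstacle to sit in the off-diagonal blocks of this commutator, where the constants coming from $C_0$ and from $C_1$ enter with opposite signs: there a stray multiple $(t_0-t_1)J_k$ survives unless the common difference on $C_0$ equals that on $C_1$, so the one delicate point is to read the first WQH condition as a single shared value and verify the hypothesis indeed guarantees it. Once $QA_XQ=A_X$ is established, $A_R$ is untouched and $QA_XQ$ inherits the zero diagonal of $A_X$, so $\hat Q^T A\hat Q$ is a symmetric $(0,1)$-matrix with zero diagonal equal to the adjacency matrix of $G'$; as $\hat Q$ is orthogonal, $A'=\hat Q^T A\hat Q$ is orthogonally similar to $A$, and therefore $G$ and $G'$ are cospectral.
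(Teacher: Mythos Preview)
The paper does not actually prove this theorem; it is quoted from \cite{WQHswitching} as background, and the only thing the paper adds is the explicit display of the associated orthogonal matrix $Q_{\text{\WQH{2k}}}=\tfrac1k\left(\begin{smallmatrix}Y_k&J_k\\J_k&Y_k\end{smallmatrix}\right)$. Your proposal is therefore not competing with a proof in the paper but rather supplying one, and it does so exactly along the lines the paper's framework suggests: exhibit $\hat Q=Q\oplus I$ and check that $\hat Q^T A\hat Q$ is the adjacency matrix of $G'$.

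Your argument is correct. The computation of $Q\beta$ for an outside vertex is right and cleanly explains both why the second condition is needed (integrality of $Q\beta$) and why the switch only touches the $\{0,k\}$-vertices. For the inside block you correctly reduce $QA_X=A_XQ$ to the vanishing of rank-one terms $u\,\j^T-\j\,u^T$ with $u$ the difference-of-degrees vector, and you are right to flag the one genuine subtlety: the off-diagonal blocks of the commutator leave a residual $(t_0-t_1)J_k$, so the hypothesis must be read as a \emph{single} common value of $\deg(v,C_{1-i})-\deg(v,C_i)$ across both $i$. That is indeed how the statement is intended (``has the same value'' quantifies over all $i$ and $v$), and it is not automatic from the separate constancy on $C_0$ and on $C_1$, since $t_0=t_1$ is equivalent to $|E(C_0)|=|E(C_1)|$. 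With that reading, your proof goes through.
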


If the switching set, $X$ or $C_0 \cup C_1$, is of size $2k$, the switching is called a \GM{2k} or \WQH{2k} switching respectively. The orthogonal matrices associated  with \GM{2k} and \WQH{2k} switching are the level $k$ matrices

\begin{equation*}
    Q_{\text{\GM{2k}}} = \frac{1}{k}\begin{pmatrix} -Y_k & J_k \\ J_k & -Y_k \end{pmatrix} \quad \text{ and }\quad Q_{\text{\WQH{2k}}} =  \frac{1}{k}\begin{pmatrix} Y_k & J_k \\ J_k & Y_k \end{pmatrix}. 
\end{equation*}

Note that \WQH{2k} switching coincides with \GM{2k} switching for $k \leq 2$, because then the orthogonal matrices are equivalent. For $k \geq 3$ the corresponding orthogonal matrices are not equivalent and they are different switching methods. \GM2-switching has level $1$ and is the same as a permutation of the two vertices in the switching set, so \GM4-switching (or equivalently \WQH4-switching) is the smallest of these switching methods that can produce non-isomorphic graphs. Both these switching methods have also been proposed in a more general form \cite{GMswitching, QIU2020265}, but the description stated here covers the case where the orthogonal matrix has one indecomposable block. The other known small switching methods were first described in \cite{AHswitching}. These switching methods were given a combinatorial description in \cite{switchingpaper}. In the paper \cite{switchingpaper}, we analyzed which of these switching methods reduced to smaller switching methods. We call a method \emph{irreducible} if it does not reduce to smaller switching methods, see Definition~\ref{def:reducibility}. Any graph that has a cospectral mate through a reducible switching already has one through one of the smaller switching methods. This is why we only consider the irreducible switching methods. In particular, we only use \AH6 and Fano switching to refer to the irreducible cases.

\begin{theorem}[\AH6-switching, \cite{switchingpaper,AHswitching}]\label{thm:ahswitching}
    Let $G$ be a graph with a triple $(C_1,C_2,C_3)$ of disjoint pairs of vertices such that  
    \begin{enumerate}[i)]
        \item the induced subgraph on $C_1\cup C_2 \cup C_3$ is
    \begin{tikzpicture}[baseline=(O.base)]
    \path (0,-.5) node (O) {};
    \path (-1.5,-.1) rectangle (0,0);
    \fill[ss] (-1,-.4) ellipse (.3 and .7)
    (0,-.4) ellipse (.3 and .7)
    (1,-.4) ellipse (.3 and .7);
    \path[every node/.append style={circle, fill=black, minimum size=5pt, label distance=2pt, inner sep=0pt}]
    (-1,0) node (1) {}
    (-1,-.8) node[label={[label distance=5pt]270:\color{sslabel}\small\(C_1\)}] (2) {}
    (0,0) node (3) {}
    (0,-.8) node[label={[label distance=5pt]270:\color{sslabel}\small\(C_2\)}] (4) {}
    (1,0) node (5) {}
    (1,-.8) node[label={[label distance=5pt]270:\color{sslabel}\small\(C_3\)}] (6) {};
    \draw (2) edge (4) edge (3) edge[out=-20,in=200] (6)
    (4) edge (5) edge (6)
    (6) edge (1);
\end{tikzpicture}
    \item any vertex $v$ outside $C_1\cup C_2 \cup C_3$ satisfies $\deg(v, C_1) \equiv \deg(v,C_2) \equiv \deg(v,C_3) \pmod{2}$.
    \end{enumerate}
     Let \(\pi\) be the permutation on \(C_1\cup C_2\cup C_3\) that shifts the vertices cyclically to the left. For every \(v\in D\) that has exactly one neighbor \(w\) in each \(C_i\), replace each edge \(\{v,w\}\) by \(\{v,\pi(w)\}\).
    Replace the induced subgraph on \(C_1\cup C_2\cup C_3\) by:
    \[\begin{tikzpicture}[baseline=(O.base)]
    \path (0,-.5) node (O) {};
    \path (-1.5,-.1) rectangle (0,0);
    \fill[ss] (-1,-.4) ellipse (.3 and .7)
    (0,-.4) ellipse (.3 and .7)
    (1,-.4) ellipse (.3 and .7);
    \path[every node/.append style={circle, fill=black, minimum size=5pt, label distance=2pt, inner sep=0pt}]
    (-1,0) node (1) {}
    (-1,-.8) node[label={[label distance=5pt]270:\color{sslabel}\small\(C_1\)}] (2) {}
    (0,0) node (3) {}
    (0,-.8) node[label={[label distance=5pt]270:\color{sslabel}\small\(C_2\)}] (4) {}
    (1,0) node (5) {}
    (1,-.8) node[label={[label distance=5pt]270:\color{sslabel}\small\(C_3\)}] (6) {};
    \draw (2) edge (4) edge (5) edge[out=-20,in=200] (6)
    (4) edge (1) edge (6)
    (6) edge (3);
\end{tikzpicture}\]
    The resulting graph is \(\mathbb{R}\)-cospectral with \(G\).
\end{theorem}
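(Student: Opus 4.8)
The plan is to realise \AH6-switching as conjugation by a regular rational orthogonal matrix and then to invoke the Johnson--Newman theorem \cite{rcospectral}. Write $S = C_1\cup C_2\cup C_3$ and $D = V\setminus S$, and let $R$ be the $6\times 6$ regular rational orthogonal matrix of level $2$ associated with \AH6-switching (available from \cite{AHswitching,switchingpaper}), normalised so that $R\j=\j$ (replacing $R$ by $-R$ if necessary). Set $\hat R = R\oplus I_{n-6}$, block-diagonal with respect to $V = S\sqcup D$; then $\hat R$ is again regular, rational and orthogonal, and since it is rational and regular it will be enough to show $\hat R^{T}\!A\hat R = A'$, where $A$ and $A'$ are the adjacency matrices of $G$ and of the switched graph $G'$.

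Writing $A$ in block form with diagonal blocks $\Gamma$ (the induced subgraph on $S$) and $B$ (the induced subgraph on $D$) and off-diagonal block $N$ recording the $S$--$D$ adjacencies, one gets that $\hat R^{T}\!A\hat R$ has the same block shape with $\Gamma$ replaced by $R^{T}\Gamma R$, with $N$ replaced by $R^{T}N$, and with $B$ unchanged. So the proof splits into two verifications. The first is the diagonal block: I would plug the explicit $\Gamma$ from hypothesis (i) and the explicit $R$ into a $6\times 6$ multiplication and read off that $R^{T}\Gamma R = \Gamma'$, the adjacency matrix of the graph in the second diagram.

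The second verification concerns the off-diagonal block $R^{T}N$. Fix $v\in D$ with column $a\in\{0,1\}^{6}$ of $N$, and group the coordinates of $a$ by the three pairs, so that the $i$-th block contains $d_i := \deg(v,C_i)\in\{0,1,2\}$ ones. Since $R$ has level $2$, the vector $2R^{T}a$ is integral, so a priori $R^{T}a\in\tfrac12\mathbb{Z}^{6}$; hypothesis (ii), namely $d_1\equiv d_2\equiv d_3\pmod 2$, is precisely what forces the half-integer parts to cancel, so that $R^{T}a$ is integral. Now orthogonality gives $\sum_j (R^{T}a)_j^{2} = \|a\|^{2} = d_1+d_2+d_3$ and $R\j=\j$ gives $\sum_j (R^{T}a)_j = \j^{T}a = d_1+d_2+d_3$; subtracting, $\sum_j (R^{T}a)_j((R^{T}a)_j-1)=0$, a sum of products of consecutive integers, so every term vanishes and $R^{T}a\in\{0,1\}^{6}$. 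It then remains to identify which $0/1$ vector it is, over the finitely many admissible patterns: the patterns with all $d_i$ even, for which I expect $R^{T}a = a$ (so $v$'s adjacencies are unchanged), and the pattern $d_1=d_2=d_3=1$ together with its $2^{3}$ refinements recording which vertex $w_i$ of $C_i$ is adjacent to $v$, for which I expect $R^{T}(e_{w_1}+e_{w_2}+e_{w_3}) = e_{\pi(w_1)}+e_{\pi(w_2)}+e_{\pi(w_3)}$, matching the prescribed edge replacement $\{v,w_i\}\mapsto\{v,\pi(w_i)\}$. Once this is done, $\hat R^{T}\!A\hat R = A'$, and \cite{rcospectral} yields that $G$ and $G'$ are $\R$-cospectral.

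The hard part will be this last bookkeeping. Since $R$ is not a permutation matrix, none of its columns is a $0/1$ vector, so the fact that a sum of three columns of $R^{T}$ collapses onto the indicator of three $\pi$-shifted vertices rests on cancellations that occur only for the admissible patterns; conditions (i) and (ii) are exactly what gets consumed to make these cancellations happen and to keep the switched object a graph. Everything else --- the regularity, rationality and orthogonality of $\hat R$, the block multiplication, and the appeal to \cite{rcospectral} --- is routine, so the whole argument reduces to a finite, mildly tedious verification with the explicit $6\times 6$ matrix $R$.
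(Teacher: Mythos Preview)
Your proposal is correct and is exactly the approach the paper indicates: the paper does not give a self-contained proof of this theorem (it is cited from \cite{AHswitching,switchingpaper}), but it explicitly records the associated regular rational orthogonal matrix $Q_{AH_6}$ and states that all such switching methods yield $\mathbb{R}$-cospectral graphs via Johnson--Newman \cite{rcospectral}, which is precisely your $\hat R^{T}A\hat R=A'$ verification. Your norm-and-row-sum trick to force $R^{T}a\in\{0,1\}^{6}$ is a clean way to handle the off-diagonal block, and the remaining finite case-check against the explicit $Q_{AH_6}$ is routine.
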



\begin{theorem}[Fano switching, \cite{AHswitching,switchingpaper}]\label{thm:fano}
    Let \(G\) be a graph and \(C=\{v_1,\dots,v_7\}\) a subset of its vertices. Let \(\pi\) be the cyclic permutation \((v_i\mapsto v_{i+1\pmod{7}})\). Define \(\ell:=\{v_1,v_2,v_4\}\), \(\mathcal{O}:=\{v_3,v_5,v_6\}\), and \(\ell_i=\pi^i(\ell)\) and \(\mathcal{O}_i=\pi^i(\mathcal{O})\) for all \(i\in\mathbb{Z}/7\mathbb{Z}\). The set \(C\), together with the ``lines'' \(\ell_i\), form the Fano plane \(\mathrm{PG}(2,2)\).
    Suppose that: 
    \begin{enumerate}[(i)]
        \item The induced subgraph on \(C\) is (a) \(\newcommand{\radius}{1.2}
\begin{tikzpicture}[baseline=(O.base)]
    \path (0,0) node (O) {};
    \draw[thick,backline] (-30:\radius) -- coordinate (P1)
    (90:\radius) -- coordinate (P2)
    (210:\radius) -- coordinate (P3) cycle;
    \draw[thick,backline] (210:\radius) -- (P1) (-30:\radius) -- (P2) (90:\radius) -- (P3);
    \node[draw,thick,backline] at (O) [circle through=(P1)] {};
    \path[every node/.append style={circle, fill=black, minimum size=5pt, label distance=0pt, inner sep=0pt}]
    (O) node[label={[xshift=2pt]90:\(v_6\)}] (6) {}
    (P1) node[label={0:\(v_7\)}] (7) {}
    (P2) node[label={[label distance=3pt,yshift=3pt]180:\(v_4\)}] (4) {}
    (P3) node[label={[label distance=2pt]270:\(v_5\)}] (5) {}
    (-30:\radius) node[label={-40:\(v_3\)}] (3) {}
    (90:\radius) node[label={[label distance=0pt]0:\(v_1\)}] (1) {}
    (210:\radius) node[label={[label distance=0pt]220:\(v_2\)}] (2) {};
    \draw (1) edge[out=-135,in=75] (2)
    (2) edge[out=-15,in=195] (3)
    (3) edge[out=135,in=-15] (4)
    (4) edge (5)
    (5) edge (6)
    (6) edge (7)
    (7) edge (1);
\end{tikzpicture}\quad\text{ or }\quad\text{(b)}
\begin{tikzpicture}[baseline=(O.base)]
    \path (0,0) node (O) {};
    \draw[thick,backline] (-30:\radius) -- coordinate (P1)
    (90:\radius) -- coordinate (P2)
    (210:\radius) -- coordinate (P3) cycle;
    \draw[thick,backline] (210:\radius) -- (P1) (-30:\radius) -- (P2) (90:\radius) -- (P3);
    \node[draw,thick,backline] at (O) [circle through=(P1)] {};
    \path[every node/.append style={circle, fill=black, minimum size=5pt, label distance=0pt, inner sep=0pt}]
    (O) node[label={[xshift=2pt]90:\(v_6\)}] (6) {}
    (P1) node[label={0:\(v_7\)}] (7) {}
    (P2) node[label={[label distance=3pt,yshift=3pt]180:\(v_4\)}] (4) {}
    (P3) node[label={[label distance=2pt]270:\(v_5\)}] (5) {}
    (-30:\radius) node[label={-40:\(v_3\)}] (3) {}
    (90:\radius) node[label={[label distance=0pt]0:\(v_1\)}] (1) {}
    (210:\radius) node[label={[label distance=0pt]220:\(v_2\)}] (2) {};
    \draw (4) edge[out=-45,in=165] (3) edge (5)
    (6) edge (1) edge (3) edge (5)
    (7) edge (1) edge[out=225,in=15] (2) edge (3) edge (4) edge (6);
\end{tikzpicture}\)
        \item Every vertex outside \(C\) is adjacent to either:
        \begin{enumerate}[1.]
            \item All vertices of \(C\).
            \item No vertices of \(C\).
            \item Three vertices of \(C\) contained in a line.
            \item Four vertices of \(C\) not contained in a line.
        \end{enumerate}
    \end{enumerate}
    For every \(v\) outside \(C\) that is (non)adjacent to the three vertices of a line \(\ell_i\), make it (non)adjacent to the three vertices of \(\mathcal{O}_i\). In case (b) above, replace the induced subgraph on \(C\) by: \[\newcommand{\radius}{1.2}
\begin{tikzpicture}[baseline=(O.base)]
    \path (0,0) node (O) {};
    \draw[thick,backline] (-30:\radius) -- coordinate (P1)
    (90:\radius) -- coordinate (P2)
    (210:\radius) -- coordinate (P3) cycle;
    \draw[thick,backline] (210:\radius) -- (P1) (-30:\radius) -- (P2) (90:\radius) -- (P3);
    \node[draw,thick,backline] at (O) [circle through=(P1)] {};
    \path[every node/.append style={circle, fill=black, minimum size=5pt, label distance=0pt, inner sep=0pt}]
    (O) node[label={[xshift=2pt]90:\(v_6\)}] (6) {}
    (P1) node[label={0:\(v_7\)}] (7) {}
    (P2) node[label={[label distance=3pt,yshift=3pt]180:\(v_4\)}] (4) {}
    (P3) node[label={[label distance=2pt]270:\(v_5\)}] (5) {}
    (-30:\radius) node[label={-40:\(v_3\)}] (3) {}
    (90:\radius) node[label={[label distance=0pt]0:\(v_1\)}] (1) {}
    (210:\radius) node[label={[label distance=0pt]220:\(v_2\)}] (2) {};
    \draw (4) edge[out=-15,in=135] (3) edge (5)
    (2) edge[out=-15,in=195] (3) edge (5) edge[out=15,in=-135] (7)
    (1) edge[out=-135,in=75] (2) edge (4) edge[out=-105,in=105] (5) edge (6) edge (7);
\end{tikzpicture}\] The resulting graph is \(\mathbb{R}\)-cospectral with \(G\).
\end{theorem}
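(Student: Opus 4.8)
The plan is to realize Fano switching as conjugation by one fixed regular rational orthogonal matrix and then invoke the Johnson--Newman criterion recalled in Section~\ref{sec:prelim} (see \cite{rcospectral}): to prove that $G$ and the switched graph $G'$ are $\mathbb{R}$-cospectral it suffices to exhibit a regular rational orthogonal matrix $\widehat{Q}$ with $\widehat{Q}^{\top}A\widehat{Q}=A'$, where $A,A'$ are the adjacency matrices of $G,G'$. Order the vertices so that $C$ comes first and split $A$ into the block $A_C$ (the subgraph induced on $C$), the block $A_D$ (induced on $D:=V\setminus C$), and the off-diagonal block $N$ (the $C$--$D$ adjacencies). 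Looking for $\widehat{Q}=Q_7\oplus I_{n-7}$ with $Q_7$ a $7\times 7$ matrix, the identity $\widehat{Q}^{\top}A\widehat{Q}=A'$ is equivalent to the two identities $Q_7^{\top}A_C Q_7=A_C'$ and $Q_7^{\top}N=N'$, where $A_C'$ and $N'$ encode $G'$ on and around $C$.

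First I would write down $Q_7$ explicitly. Identify $C=\{v_1,\dots,v_7\}$ with $\mathbb{Z}/7\mathbb{Z}$ so that $\ell=\ell_0=\{v_1,v_2,v_4\}$ becomes the Singer difference set $S=\{1,2,4\}$, each $\ell_i$ becomes $i+S$, and $\mathcal{O}_0=\{v_3,v_5,v_6\}$ becomes $-S$, so $\mathcal{O}_i=i-S$. Let $B$ be the $7\times 7$ circulant $(0,1)$-matrix whose $i$-th column is the indicator $\mathbf{1}_{\ell_i}$. Since $S$ is a perfect difference set modulo $7$, one has $BB^{\top}=B^{\top}B=2I+J$ and $B+B^{\top}=J-I$. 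Put $Q_7:=\tfrac12(B^{\top}-I)$ and $\widehat{Q}:=Q_7\oplus I_{n-7}$. From those two identities, $Q_7Q_7^{\top}=\tfrac14\!\left((2I+J)-(J-I)+I\right)=I$, while $Q_7\mathbf{j}=Q_7^{\top}\mathbf{j}=\mathbf{j}$ and $2Q_7=B^{\top}-I$ is integral; hence $\widehat{Q}$ is regular, rational (of level $2$), and orthogonal.

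For the off-diagonal identity, note that each column of $N$ is the neighbourhood indicator $\mathbf{n}_v$ in $C$ of some $v\in D$, and condition~(ii) says exactly $\mathbf{n}_v\in\{\mathbf{0},\ \mathbf{j},\ \mathbf{1}_{\ell_i},\ \mathbf{j}-\mathbf{1}_{\ell_i}\}$, the switching rule replacing $\mathbf{n}_v$ by $\mathbf{0},\ \mathbf{j},\ \mathbf{1}_{\mathcal{O}_i},\ \mathbf{j}-\mathbf{1}_{\mathcal{O}_i}$ respectively. As $Q_7^{\top}$ is linear and fixes $\mathbf{0}$ and $\mathbf{j}$, proving $Q_7^{\top}N=N'$ reduces to $Q_7^{\top}\mathbf{1}_{\ell_i}=\mathbf{1}_{\mathcal{O}_i}$ for all $i$. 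Since $Q_7$ is circulant it commutes with the cyclic shift $\pi$, so only the case $i=0$ is needed; equivalently $Q_7^{\top}B=B'$, where $B'$ is the circulant with columns $\mathbf{1}_{\mathcal{O}_i}$. In our coordinates $B'=B^{\top}$, so this is the circulant identity $(B-I)B=2B^{\top}$, that is $B^2=B+2B^{\top}$, which one reads directly off the sumset $S+S$ in $\mathbb{Z}/7\mathbb{Z}$.

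It remains to handle the block on $C$. In case~(i)(a) the subgraph induced on $C$ is the $7$-cycle $v_1v_2\cdots v_7$, whose adjacency matrix is again circulant; circulants commute, so $Q_7^{\top}A_C Q_7=A_C Q_7^{\top}Q_7=A_C$, consistent with the rule leaving the induced subgraph untouched in case~(a). In case~(i)(b) the matrix $A_C$ and its prescribed replacement $A_C'$ are the two explicit (non-regular) $7$-vertex graphs in the statement, and $Q_7^{\top}A_C Q_7=A_C'$ is a finite $7\times7$ matrix multiplication. Combining the three computations gives $\widehat{Q}^{\top}A\widehat{Q}=A'$ with $\widehat{Q}$ regular rational orthogonal, so $G$ and $G'$ are $\mathbb{R}$-cospectral. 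The one step without a short structural reason is the case-(b) identity; it is the main, though bounded, computational obstacle. (If one also wants to see that (i)--(ii) are precisely the hypotheses under which $\widehat{Q}^{\top}A\widehat{Q}$ is again a $(0,1)$-adjacency matrix, one classifies the $(0,1)$-vectors $w$ with $Q_7^{\top}w\in\{0,1\}^7$ --- there are $16$, giving the four cases of (ii) --- and the symmetric $(0,1)$-matrices whose $Q_7$-conjugate is of the same type, giving graphs (a) and (b); this refinement is not needed for the stated implication.)
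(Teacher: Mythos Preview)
Your approach is correct and is precisely the argument implicit in the paper. The paper does not give a self-contained proof of this theorem; it cites \cite{AHswitching,switchingpaper} and immediately afterwards records the explicit matrix $Q_{\text{Fano}}$, relying on the general $Q$-switching framework of Definition~\ref{def:qswitching} together with the Johnson--Newman criterion from Section~\ref{sec:prelim}. Your matrix $Q_7=\tfrac12(B^{\top}-I)$ is exactly $Q_{\text{Fano}}$ (compare your description of the rows of $2Q_7+I$ with the displayed matrix), and your verification that $Q_7$ is regular orthogonal, that $Q_7^{\top}\mathbf{1}_{\ell_i}=\mathbf{1}_{\mathcal{O}_i}$ via the sumset identity $B^2=B+2B^{\top}$, and that $Q_7$ commutes with the circulant $C_7$ in case~(a), fills in the details the paper leaves to the references. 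The only residual step, the finite $7\times7$ check $Q_7^{\top}A_CQ_7=A_C'$ in case~(b), is acknowledged as such; this is consistent with how \cite{switchingpaper} handles it.
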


The associated regular orthogonal matrices of level $2$ are called $Q_{AH_6}$ and $Q_{Fano}$ and are as follows:
\[
Q_{AH_6} = \frac{1}{2}\left[
\arraycolsep=2.5pt\def\arraystretch{.9}\begin{array}{rrrrrr}
1 & 1     & 0 & 0        & 1 & -1     \\
1 & 1     & 0 & 0        & -1 & 1     \\
1 & -1       & 1 & 1      & 0 & 0       \\
-1 & 1       & 1 & 1      & 0 & 0       \\
0 & 0       & 1 & -1      & 1 & 1    \\
0 & 0       & -1 & 1      & 1 & 1    
\end{array}
\right]\text{ and }  Q_{Fano} = \frac{1}{2}\left[
\arraycolsep=4pt\def\arraystretch{.9}\begin{array}{rrrrrrr}
-1 & 1 & 1 & 0 & 1 & 0 & 0 \\
0 & -1 & 1 & 1 & 0 & 1 & 0 \\
0 & 0 & -1 & 1 & 1 & 0 & 1 \\
1 & 0 & 0 & -1 & 1 & 1 & 0 \\
0 & 1 & 0 & 0 & -1 & 1 & 1 \\
1 & 0 & 1 & 0 & 0 & -1 & 1 \\
1 & 1 & 0 & 1 & 0 & 0 & -1
\end{array}
\right].
\]

 It was shown in \cite{switchingpaper} that \GM4, \AH6 and Fano switching are the only irreducible switching methods of level 2 that use orthogonal matrices of dimension at most 8.

\subsection{Switching}\label{sec:prelimswitching}

In this subsection we give a definition of `a switching method'  that unifies the methods in the previous section. Our framework, which works for all adjacency switching methods, needs a version of this general definition that we introduce in Definition~\ref{def:qgamma}. We also introduce the other terminology needed to understand Theorem~\ref{thm:generalasymp}.
 
\begin{definition}\label{def:qswitching}
    Let $Q$ be a real orthogonal matrix with no integral columns and let \(G\) be a graph with adjacency matrix $A$. A \emph{$Q$-switching} on \(G\) is an operation which returns the graph $G'$ that has adjacency matrix $(Q \oplus I)^TA(Q\oplus I)$, where $I$ is an identity matrix of the appropriate dimension. The rows and columns of $Q \oplus I$ can be labelled by the vertices of $G$. The vertices of $G$ that correspond to the rows and columns of $Q$, form the \emph{switching set}. The induced subgraph on the switching set is the \emph{$Q$-switching graph} and is often denoted by $\Gamma$. 
\end{definition}

An integral column in a real orthogonal matrix is a column with exactly one $1$. A matrix with such columns is equivalent up to permutation of the rows and columns to a matrix of the form $Q \oplus I$. Hence we exclude the matrix $Q$ from having integral columns.

Given a $Q$-switching on $G$, we can partition the vertices of $G$ into the switching set and its complement. Let $$A = \begin{pmatrix} B& V \\ V^T & C \end{pmatrix} $$ be the associated block form of the adjacency matrix, where $B$ is the adjacency matrix of the $Q$-switching graph. Now the graph obtained by switching has adjacency matrix 
\[ 
        A' = \begin{pmatrix} Q^T & 0 \\ 0 & I \end{pmatrix}A\begin{pmatrix} Q & 0 \\ 0 & I\end{pmatrix} =  \begin{pmatrix}
            Q^TBQ & Q^TV \\ V^TQ & C
        \end{pmatrix}.
\]

Therefore, one can list all possible $Q$-switching graphs by testing whether $Q^TBQ$ is the adjacency matrix of a graph and checking that $Q^TV$ (and thus also $V^TQ$) are (0,1)-matrices. 
\begin{definition}
    The set of all $Q$-switching graphs is denoted by $\mathcal{B}_Q$. Given an $m\times m$ orthogonal matrix $Q$, we call a vector $\v \in \{0,1\}^m$ a \emph{$Q$-respecting vector} if $Q^T\v \in \{0,1\}^m$. The set of all $Q$-respecting vectors is denoted by $\mathcal{V}_Q$.
\end{definition} 
Note that all the columns of $V$ must be $Q$-respecting vectors. Our main result, Theorem~\ref{thm:generalasymp}, performs the enumeration of cospectral mates generated by each $Q$ per $Q$-switching graph. The following definition makes this distinction precise.

\begin{definition}\label{def:qgamma}
    For an $m\times m$ real orthogonal matrix $Q$ and a labelled graph $\Gamma$ with vertex set $[m]$, a \emph{$(Q,\Gamma)$-switching} of a graph $G$ is a $Q$-switching of $G$ which can be represented by $(Q \oplus I)^TA(G)(Q \oplus I)$, where the ordering of the vertices starts with a tuple $X = (x_1,\dots, x_m)$ such that $A(X) = A(\Gamma)$. The tuple $X$ is the \emph{$(Q,\Gamma)$-switching tuple}.
\end{definition}

 A $(Q, \Gamma)$-switching only exists if $\Gamma$ is a $Q$-switching graph of $Q$. With this definition we can now be more precise about irreducibility.
\begin{definition}\label{def:reducibility}
    Let $Q$ be an indecomposable orthogonal matrix. A $(Q,\Gamma)$-switching method is \emph{reducible} if it can be obtained by a sequence of \emph{smaller switching methods}, that is, switching methods coming from matrices whose largest indecomposable block is smaller than the size of \(Q\).
    Otherwise, it is \emph{irreducible}.

\end{definition}

In terms of enumeration, we have to distinguish between different $Q$-switching graphs $\Gamma$, because the $Q$-switching graphs have different amounts of symmetry. 

\begin{definition}
    Consider a $(Q,\Gamma)$-switching. The group $\aut(\Gamma)$ acts on $\{0,1\}^m$ by permuting the entries according to the permutation of the vertices in $\Gamma$. Let \emph{$\aut_Q(\Gamma)$} be the subgroup that sends $\mathcal{V}_Q$ to itself.
\end{definition}

We note here that $\aut_Q(\Gamma)$ contains at least all elements of $\aut(\Gamma)$ that fix the matrix $Q$. Associated to each permutation $p \in \aut(\Gamma)$ is a matrix $P$. The action of $p$ on a matrix $Q$ sends the matrix to $P^TQP$. If this action fixes $Q$, that is, $P^TQP = Q$, then for any $\v \in \mathcal{V}_Q$ it holds that $QP^T\v = P^TQ^T\v$ is a (0,1)-vector, so $P^T\v \in \mathcal{V}_Q$. Hence any permutation $p \in \aut(\Gamma)$ that fixes $Q$ lies in $\aut_Q(\Gamma)$.

In a large graph with a $(Q,\Gamma)$-switching the automorphisms of $\aut_Q(\Gamma)$ might extend to isomorphisms of the whole graph. In the proof of Theorem~\ref{thm:generalasymp}, we introduce conditions to control the isomorphism classes of the graphs we consider. In particular, we want the graphs we construct to be asymmetric. If there is a non-trivial permutation in $\aut_Q(\Gamma)$ that sends each $Q$-respecting vector to itself, then this would always be an automorphism of the whole graph $G$. We also want to make sure there is no permutation that changes the graph exactly like the switching method, because then the switching method would never create non-isomorphic graphs.

\begin{definition}\label{def:distinguishing}
    A $(Q,\Gamma)$-switching method is \emph{distinguishing} if $\aut_Q(\Gamma)$ acts faithfully on $\mathcal{V}_Q$. It is \emph{switching-distinguishing} if there is no permutation $\phi : [m] \to [m]$ such that $\phi \cdot A(\Gamma) = Q^TA(\Gamma)Q$ and such that $\v_i = (Q^T\v)_{\phi(i)}$  for all $Q$-respecting vectors $\v$ and all $1\leq i \leq m$.
\end{definition}

In Section \ref{sec:consequences} we show that GM, WQH and some other switching methods have matrices for which any switching method is distinguishing and switching-distinguishing. One can show this in one go by simply showing that there is no permutation that induces the appropriate fixing of the $Q$-respecting vectors. In fact any proper switching method is switching-distinguishing.

\begin{lemma}
    If a $(Q,\Gamma)$-switching method produces a pair of cospectral mates then it is switching-distinguishing.
\end{lemma}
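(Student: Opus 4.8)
The plan is to prove the contrapositive: assuming the switching method is \emph{not} switching-distinguishing, I will show that $G$ and its switch $G'$ are always isomorphic, so no pair of cospectral mates is produced. By Definition~\ref{def:distinguishing}, failing to be switching-distinguishing means there is a permutation $\phi:[m]\to[m]$ with $\phi\cdot A(\Gamma) = Q^TA(\Gamma)Q$ and $\v_i = (Q^T\v)_{\phi(i)}$ for every $Q$-respecting vector $\v$ and every $i$. The first condition says $\phi$ is an isomorphism from $\Gamma$ to the switched switching graph (whose adjacency matrix is $Q^TBQ$, with $B = A(\Gamma)$); the second condition says $\phi$ transforms each column $\v$ of the off-diagonal block $V$ into the corresponding column $Q^T\v$ of the switched block $Q^TV$. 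So the idea is that $\phi$, extended by the identity on $V\setminus X$, realizes the switching operation as a graph isomorphism.

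Concretely, write $A(G) = \begin{pmatrix} B & V \\ V^T & C\end{pmatrix}$ in the block form of Definition~\ref{def:qgamma}, where the first $m$ rows/columns are indexed by the switching tuple $X = (x_1,\dots,x_m)$ with $A(X)=A(\Gamma)=B$, and recall
\[
A(G') = \begin{pmatrix} Q^TBQ & Q^TV \\ V^TQ & C\end{pmatrix}.
\]
Let $P_\phi$ be the permutation matrix of $\phi$ on the first $m$ coordinates, and consider the permutation matrix $R = P_\phi \oplus I$ of $G$. I would then check that $R^T A(G') R = A(G)$ block by block. The top-left block becomes $P_\phi^T(Q^TBQ)P_\phi = P_\phi^T(\phi\cdot B)P_\phi = B$, using the first property of $\phi$ (here $\phi\cdot B$ denotes the adjacency matrix of $\Gamma$ permuted by $\phi$, so conjugating back by $P_\phi$ undoes it). The off-diagonal block: applying $P_\phi^T$ to the rows of $Q^TV$ sends, column by column, the vector $Q^T\v$ to the vector whose $i$-th entry is $(Q^T\v)_{\phi(i)} = \v_i$, i.e.\ back to $\v$; hence $P_\phi^T(Q^TV) = V$, and by symmetry $(V^TQ)P_\phi = V^T$. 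The bottom-right block $C$ is fixed by the identity. Therefore $R^TA(G')R = A(G)$, so $G$ and $G'$ are isomorphic, and the switching produces no cospectral mate — contradicting the hypothesis. This proves the lemma.

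The main thing to be careful about is the bookkeeping of the two actions of $\phi$: the action on $m\times m$ matrices ($B \mapsto P_\phi^T B P_\phi$, the one appearing in ``$\phi\cdot A(\Gamma)$'') versus the action on vectors/columns ($\v \mapsto$ the vector with entries $\v_{\phi(i)}$, i.e.\ $P_\phi^T\v$), and in particular getting the direction of $\phi$ versus $\phi^{-1}$ right so that the two conditions of Definition~\ref{def:distinguishing} really do compose to give $R^TA(G')R = A(G)$ rather than some twisted version. One should also note that the argument needs $\phi$ to be independent of which $Q$-respecting vector $\v$ is plugged in — which is exactly what the definition guarantees, since the columns of $V$ are all $Q$-respecting vectors — so the single permutation $R$ works simultaneously for every column of $V$. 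No genuine obstacle beyond this indexing care; the statement is essentially an unpacking of Definition~\ref{def:distinguishing}.
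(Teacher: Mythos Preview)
Your proof is correct and follows exactly the same contrapositive approach as the paper: extend the permutation $\phi$ by the identity outside the switching set and observe that this extended permutation is an isomorphism between $G$ and the switched graph $G'$. The paper's version is terser (it simply asserts that the extended permutation ``is an isomorphism between $G$ and the switched graph'' without writing out the block-by-block verification), but the content is identical.
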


\begin{proof}
    By contraposition. Let $G$ be a graph with a $(Q,\Gamma)$-switching. Any permutation $\phi \in \aut_Q(\Gamma)$ that satisfies $\v_i = (Q^T\v)_{\phi(i)}$ for all $Q$-respecting vectors can be extended to a permutation on the vertices of $G$ by acting as the identity outside the switching set. This permutation, by definition of the switching, is an isomorphism between $G$ and the switched graph. This holds for any graph $G$, so $(Q,\Gamma)$-switching does not produce cospectral mates.
\end{proof}

It is not known to the authors which $(Q,\Gamma)$-switching methods are distinguishing and hence this property is needed as an assumption in Theorem~\ref{thm:generalasymp}.

\section{Asymptotic number of cospectral graphs per switching method}\label{sec:thmproof}

In this section we prove Theorem~\ref{thm:generalasymp}.

 \newtheorem*{thm:main}{Theorem \ref{thm:generalasymp}}
\begin{thm:main}
    If $Q$ is an $m\times m$ orthogonal matrix such that $(Q,\Gamma)$-switching is distinguishing and produces at least one cospectral mate, then there are 
   \begin{equation}\label{eq:asympformula}
       \frac{1}{|\aut_Q(\Gamma)|}|\mathcal{V}_Q|^{n-m}g_{n-m}(1+o(1))
   \end{equation}
    graphs on $n$ vertices that have a cospectral mate through $(Q,\Gamma)$-switching.
\end{thm:main}

The proof consists of the upper bound, which is proved in Lemma~\ref{lem:upperbound} and the lower bound, which is proved in Lemma~\ref{lem:lowerbound}. 
\subsection{Proof setup}

Theorem~\ref{thm:generalasymp} counts graphs up to isomorphism, but this is equivalent to counting adjacency matrices up to conjugation by permutation matrices. Although in the proof we use the matrix formulation, we use the word isomorphism to mean graph isomorphism between the adjacency matrices.

We look at the set $\mathcal{T}_n$ of all vertex-labelled graphs with vertex set $[n]$ for which $(1,\dots,m)$ is a $(Q,\Gamma)$-switching tuple. To be precise, for each isomorphism class of graphs on $k$ vertices, we choose one representative. Let $\mathcal{A}_k$ 
be the set of adjacency matrices of these representatives. If $A(\Gamma)$ denotes the adjacency matrix of the labelled graph $\Gamma$, then each graph of order $n$ with a $(Q,\Gamma)$-switching is isomorphic to a graph whose adjacency matrix is in the set
\[
 \mathcal{T}_n(Q,\Gamma) := \left\{ \begin{pmatrix} A(\Gamma) & V \\ V^T & C\end{pmatrix} : C \in \mathcal{A}_{n-m} \text{ and columns of } V \text{ are in } \mathcal{V}_Q\right\}.
\]

We want to focus on the subset of those graphs where: 
\begin{enumerate}
    \item The set of vertices outside the switching set induce an asymmetric graph.
    \item All of the $Q$-respecting vectors appear as a column in the top-right block of the adjacency matrix.
\end{enumerate}

In terms of matrices this is defined as follows.
\begin{definition}
    Let $\mathcal{S}_n(Q,\Gamma) \subseteq \mathcal{T}_n(Q,\Gamma)$ be the subset of matrices where each $Q$-respecting vector appears as a column in $V$ and where $C$ is the adjacency matrix of an asymmetric graph.
\end{definition}

In the rest of this section, we fix an orthogonal matrix $Q$ and a $Q$-switching graph $\Gamma$ and write $ \mathcal{S}_n, \mathcal{T}_n$ for $\mathcal{S}_n(Q,\Gamma)$ and $\mathcal{T}_n(Q,\Gamma)$. The following observation is needed for both the upper and lower bound.

\begin{lemma}\label{lem:sizeS}
For any order $n > m$,
    \[
    |\mathcal{T}_n| = |\mathcal{V}_Q|^{n-m}g_{n-m}
    \]
    and as $n \to \infty$,
    $$|\mathcal{S}_n| = |\mathcal{V}_Q|^{n-m}g_{n-m}(1-o(1)).$$
\end{lemma}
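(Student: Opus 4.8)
The first equality is essentially a counting bookkeeping exercise, so I would dispatch it first. An element of $\mathcal{T}_n$ is a block matrix $\begin{pmatrix} A(\Gamma) & V \\ V^T & C\end{pmatrix}$ where the top-left block is fixed to be $A(\Gamma)$, the block $C$ ranges over the chosen set $\mathcal{A}_{n-m}$ of isomorphism-class representatives of graphs on $n-m$ vertices, and each of the $n-m$ columns of $V$ is an arbitrary element of $\mathcal{V}_Q$. Since $|\mathcal{A}_{n-m}| = g_{n-m}$ and the columns of $V$ are chosen independently, we get $|\mathcal{T}_n| = |\mathcal{V}_Q|^{n-m}\, g_{n-m}$. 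One small point to check here is that distinct choices of $(V,C)$ really do give distinct matrices in $\mathcal{T}_n$ — this is immediate since the matrix literally displays $V$ and $C$ as sub-blocks, so there is no overcounting.

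For the second statement I would bound $|\mathcal{T}_n| - |\mathcal{S}_n|$ from above by splitting the "bad" matrices into two families: (a) those for which $C$ is the adjacency matrix of a graph with a non-trivial automorphism, and (b) those for which some $Q$-respecting vector fails to appear among the columns of $V$. For family (a), the number of such $C$ is $g_{n-m} \cdot o(1)$ by Theorem~\ref{thm:asymmetric}, and for each such $C$ there are at most $|\mathcal{V}_Q|^{n-m}$ choices of $V$, so family (a) has size at most $|\mathcal{V}_Q|^{n-m} g_{n-m}\, o(1)$. For family (b), fix a particular $\v \in \mathcal{V}_Q$; the probability, for $V$ with i.i.d.\ uniform columns from $\mathcal{V}_Q$, that $\v$ is missing is $\left(1 - \tfrac{1}{|\mathcal{V}_Q|}\right)^{n-m}$, which decays exponentially in $n$; summing over the (constant number) $|\mathcal{V}_Q|$ of vectors $\v$ via a union bound gives a bound of $|\mathcal{V}_Q| \left(1-\tfrac{1}{|\mathcal{V}_Q|}\right)^{n-m}$ on the fraction of $V$'s that miss some vector, hence family (b) has size at most $|\mathcal{V}_Q|^{n-m} g_{n-m} \cdot |\mathcal{V}_Q|\left(1-\tfrac{1}{|\mathcal{V}_Q|}\right)^{n-m}$, which is $|\mathcal{V}_Q|^{n-m} g_{n-m}\, o(1)$. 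Adding the two contributions, $|\mathcal{T}_n| - |\mathcal{S}_n| = |\mathcal{V}_Q|^{n-m} g_{n-m}\, o(1)$, and combining with the exact formula for $|\mathcal{T}_n|$ yields $|\mathcal{S}_n| = |\mathcal{V}_Q|^{n-m} g_{n-m}(1 - o(1))$.

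I do not expect any serious obstacle here; the only things to be slightly careful about are (i) that $m$ is held constant so $|\mathcal{V}_Q|$ is a constant and the exponential term $\left(1-\tfrac{1}{|\mathcal{V}_Q|}\right)^{n-m}$ genuinely is $o(1)$ (and even dominated by the $o(1)$ from asymmetry), and (ii) the edge case $|\mathcal{V}_Q| = 1$, in which case $\mathcal{V}_Q = \{\0\}$ trivially appears in every $V$ so family (b) is empty and the argument is unaffected (though in the situations of interest $|\mathcal{V}_Q| \geq 2$ since $Q$ has no integral columns). The union-bound step in family (b) is the only place requiring a genuine (if routine) estimate, so if anything is "the hard part" it is just writing that cleanly.
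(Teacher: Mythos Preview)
Your proposal is correct and matches the paper's proof essentially line for line: the paper also counts $|\mathcal{T}_n|$ directly from the block description, then bounds the complement of $\mathcal{S}_n$ by the same union bound $|\mathcal{V}_Q|(|\mathcal{V}_Q|-1)^{n-m}$ on the matrices $V$ missing some $Q$-respecting vector together with Theorem~\ref{thm:asymmetric} for the non-asymmetric $C$'s. Your extra care about the injectivity of $(V,C)\mapsto A$ and the edge case $|\mathcal{V}_Q|=1$ is not in the paper but is harmless.
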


\begin{proof}
    By definition $\mathcal{A}_{n-m}$ has size $g_{n-m}$ and $V$ consists of $n-m$ columns which can be chosen from a set of size $|\mathcal{V}_Q|$. This gives $|\mathcal{T}_n| = |\mathcal{V}_Q|^{n-m}g_{n-m}$, because the matrix $A(\Gamma)$ is fixed. For any $Q$-respecting vector there are $\left(|\mathcal{V}_Q|-1\right)^{n-m}$ choices of $V$ that do not contain this $Q$-respecting vector as a column. The upper bound $|\mathcal{V}_Q|\left(|\mathcal{V}_Q|-1\right)^{n-m}$ on all $V$ that miss a $Q$-respecting vector is $o(1)$ of $|\mathcal{V}_Q|^{n-m}$. By Theorem~\ref{thm:asymmetric}, almost all of the graphs in $\mathcal{A}_{n-m}$ are asymmetric. Therefore, $\mathcal{S}_n$ contains almost all matrices in $\mathcal{T}_n$, so $|\mathcal{S}_n| =  |\mathcal{V}_Q|^{n-m}g_{n-m}(1-o(1))$.
\end{proof}

For the lower bound we need to control the isomorphisms even better. For this we need the following to hold:
\begin{enumerate}\addtocounter{enumi}{2}
    \item After switching there is no other $(Q,\Gamma)$-switching set in the graph.
    \item The set $[m]$ is the unique set on which you can perform a $(Q,\Gamma)$-switching in this graph.
\end{enumerate}

In terms of matrices we give the following definition.
\begin{definition}\label{def:snprime}
An element $A \in \mathcal{S}_n$ is \emph{exceptional} if the switched matrix $(Q\oplus I)^TA(Q \oplus I)$ has a $(Q,\Gamma)$-switching set that is not formed by the first $m$ rows and columns. An element $A \in \mathcal{S}_n$ has a \emph{unique switching} if $[m]$ is the only $(Q,\Gamma)$-switching set in the graph.  Let $\mathcal{S}_n'\subseteq \mathcal{S}_n$ be the subset of non-exceptional matrices that have a unique switching. 
\end{definition}

In this paragraph we explain the choices of the definitions for $\mathcal{S}_n$ and $\mathcal{S}_n'$. Lemma~\ref{lem:sizeS} shows that the set $\mathcal{T}_n$ already implies the upper bound $|\mathcal{V}_Q|^{n-m}g_{n-m}$. The problem with $\mathcal{T}_n$ is that it contains graphs which are unique up to isomorphism, such as many copies of the complete graph, and isomorphism classes for which many copies are in $\mathcal{T}_n$. To get the proper upper bound, we use Conditions $1$ and $2$ in combination with the assumption that $(Q,\Gamma)$-switching is distinguishing to lower bound the isomorphism classes. The proof of the lower bound needs more preparation. We need to find a subset of $\mathcal{T}_n$ that has nearly all graphs, where switching always gives a non-isomorphic graph and where we can control the sizes of the isomorphism classes in the set from above. This subset is $\mathcal{S}_n'$. Lemma~\ref{lem:sizeS'}, shows that $\mathcal{S}_n'$ only misses an asymptotically small fraction of the elements of $\mathcal{T}_n$. Conditions 1,2 and 4 are used to determine how large the isomorphism classes are in $\mathcal{S}_n'$, see Lemma~\ref{lem:isodoubless'}. It was shown in \cite{aidaiso} that there are graphs that are isomorphic to their switched counterpart but where the isomorphism cannot fix the switching set. This implies that something akin to Condition 4 is necessary for GM-switching. In the appendix, we show that this is not a special feature of GM-switching by giving an example of isomorphic graphs that are related by WQH-switching, but where no isomorphism fixes the switching set. 

Conditions 1, 2 and 3 are necessary to ensure that $(Q,\Gamma)$-switching on the graphs in $\mathcal{S}_n'$ always creates a non-isomorphic graph. These conditions suffice as long as the switching method is switching-distinguishing. 

\subsection{Upper bound}

\begin{lemma}\label{lem:upperbound}
    Given an orthogonal $m\times m$-matrix $Q$ and a distinguishing $(Q,\Gamma)$-switching method, then there are at most

     \[
    \frac{1}{|\aut_Q(\Gamma)|}|\mathcal{V}_Q|^{n-m}g_{n-m}(1+o(1))
    \]
    non-isomorphic graphs on $n$ vertices that have a cospectral mate through $(Q,\Gamma)$-switching.
\end{lemma}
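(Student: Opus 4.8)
The plan is to bound the number of isomorphism classes of graphs that admit a $(Q,\Gamma)$-switching — a family that certainly contains every graph with a cospectral mate through such a switching — by counting isomorphism classes that meet $\mathcal{T}_n$, and then to exploit the symmetry group $\aut_Q(\Gamma)$ to show that once we restrict to $\mathcal{S}_n$, each such class contributes at least $|\aut_Q(\Gamma)|$ matrices. First I would record the routine splitting: every isomorphism class meeting $\mathcal{T}_n$ either meets $\mathcal{S}_n$ or has all of its representatives in $\mathcal{T}_n\setminus\mathcal{S}_n$ (and distinct classes give disjoint representatives there), so the number of classes meeting $\mathcal{T}_n$ is at most the number meeting $\mathcal{S}_n$ plus $|\mathcal{T}_n\setminus\mathcal{S}_n|$; by Lemma~\ref{lem:sizeS} the last term is $o\bigl(|\mathcal{V}_Q|^{n-m}g_{n-m}\bigr)$. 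Hence it suffices to bound the number of isomorphism classes meeting $\mathcal{S}_n$ by $\tfrac{1}{|\aut_Q(\Gamma)|}|\mathcal{V}_Q|^{n-m}g_{n-m}$, for which it is enough to show that every isomorphism class meets $\mathcal{S}_n$ in a set whose size is a positive multiple of $|\aut_Q(\Gamma)|$.

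To that end I would introduce the action of $\aut_Q(\Gamma)$ on matrices: for $p\in\aut_Q(\Gamma)$ with permutation matrix $P$, set $p\cdot A=(P\oplus I)^TA\,(P\oplus I)$. Writing $A=\begin{pmatrix}A(\Gamma)&V\\ V^T&C\end{pmatrix}$, this equals $\begin{pmatrix}A(\Gamma)&P^TV\\ (P^TV)^T&C\end{pmatrix}$ since $p\in\aut(\Gamma)$ fixes $A(\Gamma)$; because $p$ maps $\mathcal{V}_Q$ bijectively onto itself, the columns of $P^TV$ again lie in $\mathcal{V}_Q$ and still run through all of $\mathcal{V}_Q$, while $C$ is untouched, so the action preserves both $\mathcal{T}_n$ and $\mathcal{S}_n$, and $p\cdot A$ is isomorphic to $A$. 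The heart of the argument is that this action is \emph{free} on $\mathcal{S}_n$: if $p\cdot A=A$, then comparing top-right blocks gives $P^TV=V$, so $P^T$ fixes every column of $V$; since $A\in\mathcal{S}_n$ those columns include every $Q$-respecting vector, so $p$ acts trivially on $\mathcal{V}_Q$, and because $(Q,\Gamma)$-switching is distinguishing, $p$ is the identity. Consequently every orbit inside $\mathcal{S}_n$ has size exactly $|\aut_Q(\Gamma)|$, the intersection of any isomorphism class with $\mathcal{S}_n$ is a disjoint union of such orbits, and therefore the number of isomorphism classes meeting $\mathcal{S}_n$ is at most $|\mathcal{S}_n|/|\aut_Q(\Gamma)|\le\tfrac{1}{|\aut_Q(\Gamma)|}|\mathcal{V}_Q|^{n-m}g_{n-m}$.

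Assembling the two estimates gives at most $\tfrac{1}{|\aut_Q(\Gamma)|}|\mathcal{V}_Q|^{n-m}g_{n-m}+o\bigl(|\mathcal{V}_Q|^{n-m}g_{n-m}\bigr)=\tfrac{1}{|\aut_Q(\Gamma)|}|\mathcal{V}_Q|^{n-m}g_{n-m}(1+o(1))$ isomorphism classes, as claimed. The only genuinely delicate point is the freeness of the action, where both Condition~2 (that $V$ contains every $Q$-respecting vector as a column, so that fixing all columns of $V$ forces fixing all of $\mathcal{V}_Q$) and the distinguishing hypothesis are indispensable; Condition~1 (asymmetry of $C$) enters only indirectly, via Lemma~\ref{lem:sizeS}, to guarantee that $\mathcal{S}_n$ accounts for all but a vanishing fraction of $\mathcal{T}_n$.
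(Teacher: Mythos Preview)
Your proof is correct and follows essentially the same approach as the paper: define the action of $\aut_Q(\Gamma)$ on $\mathcal{T}_n$ by permuting the first $m$ rows and columns, show it is free on $\mathcal{S}_n$ using the distinguishing hypothesis, conclude each isomorphism class in $\mathcal{S}_n$ has at least $|\aut_Q(\Gamma)|$ representatives, and absorb the remainder $\mathcal{T}_n\setminus\mathcal{S}_n$ into the $o(1)$ term via Lemma~\ref{lem:sizeS}. You spell out in more detail why the action preserves $\mathcal{S}_n$ and why the stabilizer is trivial, but the argument is the same.
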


\begin{proof}
    Recall that all graphs of order $n$ with a $(Q,\Gamma)$-switching are isomorphic to a graph in $\mathcal{T}_n$. The group $\aut_Q(\Gamma)$ acts on $\mathcal{T}_n$ by permuting the first $m$ rows and columns of the matrices. This action has trivial stabilizer for all elements in $\mathcal{S}_n$, because the switching method is distinguishing by assumption. Hence each isomorphism class of graphs that appears in $\mathcal{S}_n$ has at least $|\aut_Q(\Gamma)|$ representatives in $\mathcal{S}_n$. Thus the amount of non-isomorphic graphs represented in $\mathcal{T}_n$ is at most
    \begin{align*}
    \frac{1}{|\aut_Q(\Gamma)|}|\mathcal{S}_n| + |\mathcal{T}_n\setminus \mathcal{S}_n| &= \frac{1}{|\aut_Q(\Gamma)|}|\mathcal{V}_Q|^{n-m}g_{n-m}(1-o(1)) + |\mathcal{V}_Q|^{n-m}g_{n-m}o(1)\\
    &= \frac{1}{|\aut_Q(\Gamma)|}|\mathcal{V}_Q|^{n-m}g_{n-m}(1+o(1)).
    \end{align*}
   
    This is an upper bound on the non-isomorphic graphs that have a $(Q,\Gamma)$-switching tuple and hence also an upper bound on the graphs that have a cospectral mate through such a switching.    
\end{proof}

Note that the need for the switching method to be distinguishing in Lemma~\ref{lem:upperbound} is only relevant for the factor $\frac{1}{|\aut_Q(\Gamma)|}$. Therefore, we still get the upper bound $|\mathcal{V}_Q|^{n-m}g_{n-m}(1+o(1))$ without assuming that $(Q,\Gamma)$-switching is distinguishing.
\subsection{Lower bound}
This section contains four preliminary lemmas building to the lower bound that is derived in Lemma~\ref{lem:lowerbound}.

\begin{lemma}\label{lem:uniqueext}
    Given a tuple $X$ of $m$ distinct vertices, a vertex $x \in X$ and a graph $G$ that contains all vertices in $X$ except $x$. There is at most one way to choose the neighbors of $x$ in $G$ such that $G + \{x\}$ is a graph with a $(Q,\Gamma)$-switching on the tuple $X$.
\end{lemma}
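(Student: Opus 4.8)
The plan is to reduce this statement to the following local fact about $Q$-respecting vectors, which is where essentially all of the content sits: \emph{if $Q$ has no integral column, then for every coordinate $j\in[m]$ there do not exist two $Q$-respecting vectors that agree in all coordinates except the $j$-th.} Granting this, the lemma is immediate, and I would organise the deduction as follows.

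Write $x=x_j$ for the slot occupied by $x$ in the tuple $X=(x_1,\dots,x_m)$, and let $D:=V(G)\setminus(X\setminus\{x\})$ be the vertices of $G$ outside the switching set. A neighbour of $x$ in $G+\{x\}$ is either a vertex of $X\setminus\{x\}$ or a vertex of $D$, and I would pin down each kind separately. For $G+\{x\}$ to admit a $(Q,\Gamma)$-switching with switching tuple $X$ (Definition~\ref{def:qgamma}), the induced subgraph on $X$ must equal $\Gamma$; this forces the adjacencies of $x=x_j$ to the remaining $x_i$ to be exactly the $j$-th row of $A(\Gamma)$ (and if the induced subgraph of $G$ on $X\setminus\{x\}$ does not already match $\Gamma$ with vertex $j$ deleted, there is no admissible $G+\{x\}$ and we are done). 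Writing the adjacency matrix of $G+\{x\}$ in block form $\begin{pmatrix} A(\Gamma) & V \\ V^T & C\end{pmatrix}$ with the rows and columns of $V$ indexed by $X$ and by $D$, the column of $V$ attached to a vertex $v\in D$ is the vector of adjacencies of $v$ to $x_1,\dots,x_m$; every entry of it except the $j$-th is already determined by $G$, and the $j$-th entry is precisely the undecided adjacency between $x$ and $v$. Admissibility requires this column to lie in $\mathcal{V}_Q$, so by the local fact at most one value of its $j$-th entry makes it $Q$-respecting. Hence every potential edge at $x$ is either forced or forbidden, leaving at most one choice of $N(x)$, that is, at most one graph $G+\{x\}$ with the desired property.

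For the local fact I would argue by contradiction. Suppose $\v,\v'\in\mathcal{V}_Q$ agree off coordinate $j$, say $\v=\v'+e_j$ with $e_j$ the $j$-th standard unit vector. Then $Q^Te_j=Q^T\v-Q^T\v'$ is a difference of two $(0,1)$-vectors, hence has all entries in $\{-1,0,1\}$; but $Q^Te_j$ is the $j$-th row of $Q$, which by orthogonality has Euclidean norm $1$, so it equals $\pm e_k$ for some $k$. Then column $k$ of $Q$ has its unique nonzero entry, namely $\pm1$, in row $j$, so it equals $\pm e_j$. If column $k$ is $e_j$, it is an integral column, contradicting the hypothesis. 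If column $k$ is $-e_j$, then for every $\mathbf{u}\in\{0,1\}^m$ the $k$-th coordinate of $Q^T\mathbf{u}$ equals $-\mathbf{u}_j$, which can lie in $\{0,1\}$ only when $\mathbf{u}_j=0$; thus every $Q$-respecting vector has $j$-th coordinate $0$, so $\v$ and $\v'$ cannot differ there — again a contradiction.

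I expect the main obstacle to be exactly this local fact, and within it the case of a column equal to $-e_j$: such a column is not an ``integral column'' in the narrow sense used in the paper (a column with exactly one $1$), so it must be excluded by the separate observation that it would pin the $j$-th coordinate of every $Q$-respecting vector to $0$. Everything else is bookkeeping, since once a $Q$-respecting vector is known to be determined by all but one of its coordinates, each edge incident to the new vertex $x$ — inside the switching set via $A(\Gamma)$, outside it via the $Q$-respecting columns of $V$ — is nailed down independently.
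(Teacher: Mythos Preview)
Your proof is correct and follows essentially the same route as the paper: adjacencies of $x$ inside $X$ are forced by $A(\Gamma)$, and for each $v\notin X$ the $Q$-respecting constraint on its column in $V$ pins down the $j$-th entry because a non-integral row of $Q$ cannot be added to a $(0,1)$-vector and land in $\{0,1\}^m$ for both choices. Your treatment is in fact slightly more careful than the paper's: you explicitly dispose of the case where the $j$-th row of $Q$ equals $-e_k$ (so column $k$ of $Q$ is $-e_j$, which is \emph{not} an ``integral column'' under the paper's narrow definition) by observing that then every $Q$-respecting vector has $j$-th coordinate $0$, whereas the paper's one-line appeal to ``no integral columns'' leaves this edge case implicit.
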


\begin{proof}
    The edges in $X$ containing $x$ are determined, because we must have $A(X) = A(\Gamma)$. For a vertex $y$ outside $X$ we consider the corresponding $Q$-respecting vector $\v$, which becomes $Q^T\v$ after switching. If $\q_x$ is the column of $Q^T$ corresponding to $x$ and $v_x \in \{0,1\}$ is the entry of $\v$ corresponding to $x$, then $Q^T\v = \c + \q_xv_x$ where $\c$ is some vector independent of $x$. By definition of $Q$-switching, see Definition~\ref{def:qswitching}, any column of $Q$ is not an integral vector, so $\c$ and $\c+\q_x$ cannot both be integral column vectors. Thus there is at most one value of $v_x$ such that $Q^T\v$ is an integral vector and the adjacency between $y$ and $x$ is determined. This holds for all $y$ outside $x$. Therefore, all edges that contain $x$ are determined.
\end{proof}

Recall by Definition~\ref{def:snprime} that an element $A \in \mathcal{S}_n$ is exceptional if the switched matrix $(Q\oplus I)^TA(Q \oplus I)$ has a $(Q,\Gamma)$-switching set that is not formed by the first $m$ rows and columns, so a matrix is non-exceptional if there are no other switching sets after switching on one set. Similarly, we said that an element $A \in \mathcal{S}_n$ has a unique switching if $[m]$ is the only $(Q,\Gamma)$-switching set in the graph. We let $\mathcal{S}_n'\subseteq \mathcal{S}_n$ be the subset of non-exceptional matrices that have a unique switching. 

\begin{lemma}\label{lem:sizeS'}
    $|\mathcal{S}'_n| = |\mathcal{S}_n|(1-o(1))$.
\end{lemma}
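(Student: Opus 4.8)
The plan is to show that the set $\mathcal{S}_n \setminus \mathcal{S}'_n$ of matrices which are either exceptional or fail to have a unique switching is asymptotically negligible compared to $|\mathcal{S}_n| = |\mathcal{V}_Q|^{n-m}g_{n-m}(1-o(1))$. Since $\mathcal{S}_n \setminus \mathcal{S}'_n$ is the union of the exceptional matrices and the matrices without unique switching, it suffices to bound each of these two sets by $|\mathcal{V}_Q|^{n-m}g_{n-m}\,o(1)$ and apply the union bound.

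First I would handle the matrices $A \in \mathcal{S}_n$ that do not have a unique switching, i.e.\ those for which some set $Y \neq [m]$ of size $m$ is also a $(Q,\Gamma)$-switching set. I would stratify by the size $j = |Y \cap [m]| \in \{0,1,\dots,m-1\}$, and further by which vertices of $[m]$ lie in $Y$ and which $n-m$ further choices make up $Y$. For a \emph{fixed} such $Y$, Lemma~\ref{lem:uniqueext} is the key tool: the $Q$-switching graph on $Y$ being isomorphic (as a labelled graph, via some ordering) to $\Gamma$ forces the adjacencies of each vertex of $Y \setminus [m]$ — there are $m - j \geq 1$ of them — to every other vertex of $G$, in at most one way each (once the rest of the graph is fixed and up to the finitely many orderings $X$ of $Y$ realizing $A(X)=A(\Gamma)$ and the finitely many ways the outside vertices' $Q$-respecting vectors could be consistent). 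Concretely, deleting a vertex $x \in Y \setminus [m]$ from a putative $A$, the number of ways to re-insert $x$ so that $Y$ is a switching set is at most a constant (depending only on $m$ and $Q$), whereas the number of ways to re-insert $x$ with \emph{arbitrary} adjacencies is roughly $|\mathcal{V}_Q|$ choices for its interaction with $[m]$ (it must be $Q$-respecting since $[m]$ is still a switching set) times $2^{\,\text{(something)}}$ for the rest — in any case a factor that grows, while the constrained count does not. Summing the resulting bound over all $O(n^m)$ choices of $Y$ and all strata gives a count that is $|\mathcal{V}_Q|^{n-m}g_{n-m}$ times a factor going to $0$; here I would be a little careful that $g_{n-m}/g_{n-m-1}$ and $|\mathcal{T}_{n}|/|\mathcal{T}_{n-1}|$ grow at least like a fixed power of $2$, so that a polynomial-in-$n$ overhead from choosing $Y$ is absorbed.

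The exceptional matrices — those where $(Q\oplus I)^TA(Q\oplus I)$ acquires a \emph{new} $(Q,\Gamma)$-switching set $Y' \neq [m]$ after switching — are handled identically after observing that switching is an involution on the relevant adjacency data: $A$ is exceptional exactly when the switched matrix $A'$ fails to have a unique switching (its switching set $[m]$ being one, and $Y'$ another), and switching is a bijection from $\mathcal{S}_n$-type configurations with switching set $[m]$ to configurations with switching set $[m]$. So the same stratify-by-overlap-and-apply-Lemma~\ref{lem:uniqueext} argument, run on $A'$, bounds the number of exceptional $A$ by $|\mathcal{V}_Q|^{n-m}g_{n-m}\,o(1)$. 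Adding the two $o(1)$ bounds and combining with $|\mathcal{S}_n| = |\mathcal{V}_Q|^{n-m}g_{n-m}(1-o(1))$ from Lemma~\ref{lem:sizeS} yields $|\mathcal{S}'_n| = |\mathcal{S}_n|(1-o(1))$, as desired.

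The main obstacle I anticipate is making the counting over all candidate alternative switching sets $Y$ genuinely uniform: one must show that the per-$Y$ savings from Lemma~\ref{lem:uniqueext} (a constant-factor-to-a-growing-factor ratio on each of the $\geq 1$ "extra" vertices of $Y$) really beats the $\binom{n}{m-j}\binom{m}{j} = O(n^{m})$ blow-up from the number of choices of $Y$, for \emph{every} overlap size $j$ including $j=0$ (where $Y$ is disjoint from $[m]$ and one might worry the two switching sets interact only weakly). The cleanest way is probably to fix $m-j$ of the "free" rows/columns of the matrix, bound the number of completions on those rows by $O(1)^{m-j}$ (iterating Lemma~\ref{lem:uniqueext}), compare to the $\Omega(2^{n})$ completions available without the constraint, and note $O(n^{m}) \cdot O(1)^{m} / 2^{n} \to 0$. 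I would also need to double check that the finitely many labelled orderings of $Y$ realizing $\Gamma$, and the finitely many admissible sign choices in the "$Q^T\v = \c + \q_x v_x$" step, really do contribute only an $m$- and $Q$-dependent constant and nothing depending on $n$.
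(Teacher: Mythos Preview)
Your proposal is correct and follows essentially the same approach as the paper: bound separately the exceptional matrices and the matrices without unique switching by fixing an alternative switching set $Y\neq[m]$, picking a vertex $y\in Y\setminus[m]$, invoking Lemma~\ref{lem:uniqueext} to see that the $y$-row/column is determined by the rest, and then summing over the $O(n^m)$ choices of $Y$ to get a bound of order $m\cdot m!\binom{n}{m}|\mathcal{S}_{n-1}|=o(|\mathcal{S}_n|)$. Your stratification by $j=|Y\cap[m]|$ is finer than necessary (one vertex in $Y\setminus[m]$ always exists and suffices), but the argument is the same.
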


\begin{proof}
    For any tuple $Y$ of $m$ distinct vertices that do not form the set $[m]$, we count the adjacency matrices $A\in \mathcal{S}_n'$ for which $Y$ is a $(Q,\Gamma)$-switching tuple of the switched graph $A'$. Pick a $y \in Y \setminus [m]$.  Let $B \in \mathcal{S}_{n-1}$ and $B' = (Q\oplus I)B(Q\oplus I)^T$. Lemma~\ref{lem:uniqueext} implies there is at most one way to choose the adjacencies of $y$ in $B'$ to ensure that $Y$ is a $(Q,\Gamma)$-switching tuple. Adding the $y$th row and column to $B'$ gives $A'$. From $A'$ one can construct $A = (Q \oplus I)A'(Q \oplus I)^T$. Therefore, each choice of $Y$ and $y \in Y\setminus [m]$ gives at most $1$ exceptional matrix in $\mathcal{S}_n$ for each matrix in $\mathcal{S}_{n-1}$. This gives at most $m\cdot m!\binom{n}{m}|\mathcal{S}_{n-1}|$ exceptional matrices in $\mathcal{S}_n$.
    
    For each $y \in Y\setminus [m]$ and $B \in \mathcal{S}_{n-1}$ there is also at most one way to add a $y$th row and column to $B$ to get a matrix in $\mathcal{S}_{n-1}$ that has $Y$ as a $(Q,\Gamma)$-switching tuple. This shows that $m \cdot m!\binom{n}{m}|\mathcal{S}_{n-1}|$ is also an upper bound on the number of matrices that do not have a unique switching. Lemma~\ref{lem:sizeS} implies that $|\mathcal{S}_n|$ grows more than exponentially in $n$, so $m\cdot m!\binom{n}{m}|\mathcal{S}_{n-1}| = o(1)|\mathcal{S}_n|$. We conclude that there are only $o(1)|\mathcal{S}_n|$ matrices that are not in $\mathcal{S}'_n$.
\end{proof}

\begin{lemma}\label{lem:isodoubless'}
    Each isomorphism class has at most $|\aut_Q(\Gamma)|$ elements in $\mathcal{S}_n'$.
\end{lemma}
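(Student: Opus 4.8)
The plan is to show that any isomorphism between two elements $A, \widehat{A} \in \mathcal{S}_n'$ must restrict to a bijection between their switching sets, and in fact (since both switching sets are $[m]$ by definition of $\mathcal{S}_n'$) must fix $[m]$ setwise; from there the constraint that it respect the $Q$-structure forces it to lie in $\aut_Q(\Gamma)$, and the claim follows by the usual orbit-counting argument.

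First I would take $A, \widehat{A}\in\mathcal{S}_n'$ with an isomorphism $\sigma\colon [n]\to[n]$ satisfying $\sigma\cdot A = \widehat{A}$. The key observation is that $\sigma$ maps the switching set of $A$ onto a set on which one can perform a $(Q,\Gamma)$-switching in $\widehat{A}$: indeed, conjugating the block decomposition of $A$ by the permutation matrix of $\sigma$ exhibits $\sigma([m])$ as a $(Q,\Gamma)$-switching set of $\widehat A$, with switching tuple $\bigl(\sigma(1),\dots,\sigma(m)\bigr)$ (here I use that $\sigma$ carries $A(\Gamma)$ on $[m]$ to $A(\Gamma)$ on $\sigma([m])$ only up to relabelling, so strictly one needs the induced subgraph on $\sigma([m])$ to be isomorphic to $\Gamma$ via the restriction of $\sigma$). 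Since $\widehat{A}$ has a unique switching, $\sigma([m]) = [m]$. Thus $\sigma$ restricts to a permutation $p$ of $[m]$ and acts as some permutation on the complement.

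Next I would argue that $p \in \aut_Q(\Gamma)$. Because $p$ is the restriction of a graph isomorphism, it is an automorphism of the induced subgraph on $[m]$, i.e. $p\in\aut(\Gamma)$. To see that $p$ sends $\mathcal{V}_Q$ to itself, note that for every vertex $y$ outside $[m]$, the column of $V$ (resp. of $\widehat{V}$) indexed by $y$ (resp. $\sigma(y)$) is a $Q$-respecting vector, and $\sigma$ relates them by the permutation $p$ of the coordinates; since (using Condition~2, that \emph{all} $Q$-respecting vectors occur as columns of $V$) every element of $\mathcal{V}_Q$ arises this way, $p$ permutes $\mathcal{V}_Q$, so $p\in\aut_Q(\Gamma)$. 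Conversely every $p\in\aut_Q(\Gamma)$ together with a suitable permutation of the complement can act within $\mathcal{S}_n'$. Now consider the action of $\aut_Q(\Gamma)$ on $\mathcal{S}_n'$ by permuting the first $m$ rows and columns (as in the proof of Lemma~\ref{lem:upperbound}); by the distinguishing hypothesis this action is free on $\mathcal{S}_n$, hence on $\mathcal{S}_n'$. The argument above shows that any two elements of $\mathcal{S}_n'$ in the same isomorphism class already lie in the same $\aut_Q(\Gamma)$-orbit: if $\sigma\cdot A=\widehat A$ then, replacing $\widehat A$ by $p^{-1}\cdot\widehat A$ for the $p\in\aut_Q(\Gamma)$ extracted above, the resulting isomorphism fixes $[m]$ pointwise, and an isomorphism of labelled graphs in $\mathcal{T}_n$ that fixes the first $m$ vertices pointwise and sends one element of $\mathcal{S}_n'$ to another must — by Lemma~\ref{lem:uniqueext} applied repeatedly, or directly since a labelled graph is its own unique representative — be the identity. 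Hence each isomorphism class meets $\mathcal{S}_n'$ in a single free $\aut_Q(\Gamma)$-orbit, which has exactly $|\aut_Q(\Gamma)|$ elements, proving the bound.

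The main obstacle I expect is the step "$\sigma([m])$ is a $(Q,\Gamma)$-switching set of $\widehat A$": one must be careful that a $(Q,\Gamma)$-switching set is not merely a switching set but one whose induced subgraph equals $A(\Gamma)$ \emph{as a labelled graph} under a suitable vertex ordering, so the bookkeeping between $\sigma$, the relabelling of $\Gamma$, and the uniqueness-of-switching hypothesis must be done cleanly; and one must invoke Condition~2 at exactly the right moment to conclude $p$ preserves all of $\mathcal{V}_Q$ rather than just the multiset of columns actually appearing. The freeness of the $\aut_Q(\Gamma)$-action is immediate from the distinguishing assumption, so that part is routine.
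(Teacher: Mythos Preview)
Your approach is essentially the paper's: show that any isomorphism $\sigma$ between two elements of $\mathcal{S}_n'$ must fix $[m]$ setwise (by uniqueness of the switching set), and that its restriction $p$ to $[m]$ lies in $\aut_Q(\Gamma)$ (using Condition~2). The paper then finishes in one line by a step you skip: since $A[[n]\setminus[m]]$ and $\widehat A[[n]\setminus[m]]$ are isomorphic via $\sigma$ and both lie in $\mathcal{A}_{n-m}$, they are equal; and since they are \emph{asymmetric} (Condition~1), $\sigma$ is the identity on $[n]\setminus[m]$. Hence $\sigma$ is completely determined by $p\in\aut_Q(\Gamma)$, giving at most $|\aut_Q(\Gamma)|$ possible targets $\widehat A$.

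Your argument has a genuine gap at exactly this point. You assert that an isomorphism between two elements of $\mathcal{S}_n'$ fixing $[m]$ pointwise must be the identity, citing ``Lemma~\ref{lem:uniqueext} applied repeatedly'' or ``a labelled graph is its own unique representative''. Neither suffices: Lemma~\ref{lem:uniqueext} concerns how adjacencies of a single vertex are forced by a switching tuple, not automorphisms of the complement; and the one-representative-per-class property of $\mathcal{A}_{n-m}$ only yields $C=\widehat C$, not that $\sigma|_{[n]\setminus[m]}$ is trivial. The missing ingredient is precisely Condition~1. Without asymmetry the conclusion is false: a nontrivial automorphism $\tau$ of $C$, extended by the identity on $[m]$, permutes the columns of $V$ and can send $A$ to a genuinely different element of $\mathcal{T}_n$, so the isomorphism class could exceed $|\aut_Q(\Gamma)|$. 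Note also that the paper's proof never invokes the distinguishing hypothesis; you use it for freeness, but since the lemma only asserts ``at most'', that is unnecessary.
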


Given two subsets $X$ and $Y$ of $[n]$, let $A[X, Y]$ denote the submatrix consisting of rows indexed by $X$ and columns indexed by $Y$. Write $A[X]$ for $A[X,X]$, such that $A[X]$ is the adjacency matrix of the induced subgraph $G[X]$.

\begin{proof}
    If $\phi: A \to B$ is an isomorphism between two graphs in $\mathcal{S}'_n$, then $\phi([m]) = [m]$, because the only $(Q,\Gamma)$-switching set of $B$ is $[m]$. Furthermore $A[[n]\setminus [m]]$ and $B[[n]\setminus [m]]$ are asymmetric elements of $\mathcal{A}_{n-m}$, so any isomorphism between them must be the identity. This implies that $\phi$ can only permute the elements of $[m]$. This needs to be an automorphism of $\Gamma$, because $A[[m]] = B[[m]] = A(\Gamma)$. Furthermore, the columns of $A[[m],[n]\setminus [m]]$ and $B[[m],[n]\setminus [m]]$ contain all $Q$-respecting vectors, so in fact $\phi$ needs to act like an element of $\aut_Q(\Gamma)$ on $[m]$. Therefore, there are at most $|\aut_Q(\Gamma)|$ graphs in $\mathcal{S}_n'$ isomorphic to $A$.
\end{proof}

Using Lemmas~\ref{lem:sizeS'} and \ref{lem:isodoubless'} we are ready to prove the lower bound.

\begin{lemma}\label{lem:lowerbound}
    For an orthogonal $m\times m$-matrix $Q$ and a $Q$-switching graph $\Gamma$ there are either $0$ or at least 
    \[
    \frac{1}{|\aut_Q(\Gamma)|}|\mathcal{V}_Q|^{n-m}g_{n-m}(1-o(1))
    \]
    graphs that have a cospectral mate through $(Q,\Gamma)$-switching.
\end{lemma}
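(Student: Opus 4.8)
The plan is to combine the three preliminary lemmas into a counting argument. First I would dispose of the trivial case: if $(Q,\Gamma)$-switching produces no cospectral mate at all, the count is $0$ and there is nothing to prove. So assume it produces at least one cospectral mate; by the Lemma preceding Definition~\ref{def:distinguishing} (the switching-distinguishing lemma), this means $(Q,\Gamma)$-switching is switching-distinguishing.

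The key step is to show that every matrix $A \in \mathcal{S}_n'$ genuinely has a cospectral mate through $(Q,\Gamma)$-switching, i.e.\ that the switched matrix $A' = (Q\oplus I)^TA(Q\oplus I)$ is \emph{not} isomorphic to $A$. Suppose for contradiction that $\psi: A' \to A$ is an isomorphism. Since $A$ has a unique $(Q,\Gamma)$-switching set, namely $[m]$, and $A$ is non-exceptional so that $[m]$ is the only $(Q,\Gamma)$-switching set of $A'$ as well, the isomorphism $\psi$ must map $[m]$ to $[m]$. Outside the switching set, $A[[n]\setminus[m]] = A'[[n]\setminus[m]] = C$ is asymmetric, so $\psi$ restricts to the identity there; hence $\psi$ permutes only $[m]$, via some permutation $\phi$. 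Because $A[[m]] = A(\Gamma)$ and $A'[[m]] = Q^TA(\Gamma)Q$, this $\phi$ satisfies $\phi \cdot A(\Gamma) = Q^TA(\Gamma)Q$. Moreover, matching up the columns of $A'[[m],[n]\setminus[m]]$ with those of $A[[m],[n]\setminus[m]]$: since all $Q$-respecting vectors appear among these columns, for every $\v \in \mathcal{V}_Q$ the vector $Q^T\v$ (a column of $A'$) must equal, after applying $\phi^{-1}$ to its entries, the corresponding column $\v$ of $A$, which gives $\v_i = (Q^T\v)_{\phi(i)}$ for all $i$ and all $Q$-respecting $\v$. This is exactly the configuration forbidden by switching-distinguishing, a contradiction. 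Hence $A' \not\cong A$, so $A$ has a cospectral mate through $(Q,\Gamma)$-switching.

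Now I would finish by counting. By Lemma~\ref{lem:sizeS} and Lemma~\ref{lem:sizeS'},
\[
|\mathcal{S}_n'| = |\mathcal{S}_n|(1-o(1)) = |\mathcal{V}_Q|^{n-m}g_{n-m}(1-o(1)).
\]
Every element of $\mathcal{S}_n'$ is (the adjacency matrix of) a graph with a cospectral mate through $(Q,\Gamma)$-switching, and by Lemma~\ref{lem:isodoubless'} each isomorphism class contributes at most $|\aut_Q(\Gamma)|$ elements to $\mathcal{S}_n'$. Therefore the number of non-isomorphic graphs on $n$ vertices with a cospectral mate through $(Q,\Gamma)$-switching is at least
\[
\frac{|\mathcal{S}_n'|}{|\aut_Q(\Gamma)|} = \frac{1}{|\aut_Q(\Gamma)|}|\mathcal{V}_Q|^{n-m}g_{n-m}(1-o(1)),
\]
which is the claimed lower bound.

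The main obstacle I expect is the contradiction argument in the second paragraph: one must be careful that the isomorphism $\psi: A' \to A$ really has no choice but to fix $[m]$ setwise (this uses \emph{both} uniqueness of the switching set in $A$ \emph{and} non-exceptionality so that $A'$ also has $[m]$ as its only $(Q,\Gamma)$-switching set), and then that the induced permutation $\phi$ of $[m]$ genuinely witnesses a violation of switching-distinguishing --- in particular that $\phi$ lies in $\aut_Q(\Gamma)$ (it preserves $\mathcal{V}_Q$ because it must match the full list of $Q$-respecting columns on both sides) and that the index bookkeeping $\v_i = (Q^T\v)_{\phi(i)}$ comes out with $\phi$ and not $\phi^{-1}$, matching Definition~\ref{def:distinguishing} exactly. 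The remaining steps are bookkeeping with the already-established cardinality estimates.
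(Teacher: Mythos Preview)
Your proposal is correct and follows essentially the same approach as the paper: assume the method is switching-distinguishing, show every $A\in\mathcal{S}_n'$ switches to a non-isomorphic graph by forcing any hypothetical isomorphism to fix $[m]$ setwise and act trivially outside it, then contradicting switching-distinguishing, and finally divide $|\mathcal{S}_n'|$ by $|\aut_Q(\Gamma)|$ using the three preparatory lemmas. The only cosmetic difference is that the paper orients the isomorphism as $\phi:A\to A'$ (so that non-exceptionality alone forces $\phi([m])=[m]$), whereas you take $\psi:A'\to A$ and invoke both uniqueness and non-exceptionality; either direction works and the index bookkeeping you flag is harmless since the definition of switching-distinguishing rules out any such permutation.
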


\begin{proof} 
Assume that there is at least one graph with a cospectral mate by $(Q,\Gamma)$-switching. By Lemma~\ref{lem:isodoubless'} the set $\mathcal{S}'_n$ can be partitioned into isomorphism classes of size at most $|\aut_Q(\Gamma)|$. Together with Lemmas \ref{lem:sizeS} and \ref{lem:sizeS'} this implies that there are
    \begin{equation*}\label{eq:sizesprime}
        \frac{1}{|\aut_Q(\Gamma)|}|\mathcal{V}_Q|^{n-m}g_{n-m}(1-o(1))
    \end{equation*}
    
    non-isomorphic matrices in $\mathcal{S}'_n$. We only need to show that switching creates a non-isomorphic graph for all matrices in $\mathcal{S}'_n$. Let $A \in \mathcal{S}'_n$ be a matrix and $A'$ be the matrix after $(Q,\Gamma)$-switching. Suppose that there is an isomorphism $\phi: A \to A'$. Now $\phi([m]) = [m]$, because $A'$ does not have a $(Q,\Gamma)$-switching set besides $[m]$ since $A$ is non-exceptional. The switching acts on $[n]\setminus [m]$ as the identity, so $A[[n]\setminus [m]] = A'[[n]\setminus [m]]$, which is an asymmetric graph. Thus $\phi$, which fixes $[n]\setminus [m]$, must also act like the identity on this set, so $\phi$ can only permute the set $[m]$. Any column $\v$ of $A[[m], [n]\setminus [m]]$ gets sent to the same column in $A'$ by $\phi$, although potentially permuted. It must satisfy $\v_{\phi(i)} = (Q^T\v)_i$ for all $i$. This needs to hold for all $Q$-respecting vectors, because they all appear as one of the columns. Furthermore, $\phi$ restricts to an isomorphism between $A[[m]]$ and $A'[[m]]$. Such a $\phi$ cannot exist, because $(Q, \Gamma)$-switching is switching-distinguishing since it produces at least one cospectral pair. Hence no isomorphism between $A$ and $A'$ exists. This implies that all the matrices in $\mathcal{S}'_n$ have a cospectral mate through $(Q,\Gamma)$-switching, which proves the lemma.
\end{proof}

\section{Applications}\label{sec:consequences}

For all $(Q,\Gamma)$-switching methods one needs to take a few steps in order to apply the framework proposed in Theorem~\ref{thm:generalasymp}: 
\begin{itemize}
    \item Show that the method is distinguishing and switching-distinguishing.
    \item Determine $|\mathcal{V}_Q|$.
    \item Calculate $|\aut_Q(\Gamma)|$.
\end{itemize}

In case we want to aggregate the results per $Q$-switching we simply do the last step for each $Q$-switching graph $\Gamma$ and add the results, see Section~\ref{sec:addbounds} for more details.

In this section we perform these steps for all the switching methods presented in Section~\ref{sec:switching}. A summary of the outcome can be seen in Table \ref{tab:appliedformula}.

\begin{table}[H]
    \centering
    $\begin{array}{l| l}
         \text{switching} & \text{number of cospectral graphs}\\[0.1cm]
         \hline
         \mathrm{GM}_4 & 8^{n-4}g_{n-4}(\tfrac13+o(1))\\[0.1cm]
         \mathrm{WQH}_6 & 22^{n-6}g_{n-6}(\tfrac{43}{18}+o(1)) \\[0.1cm]
         \mathrm{GM}_6 & 22^{n-6}g_{n-6}(\tfrac{43}{180}+o(1))\\[0.1cm]
         \mathrm{AH}_6 & \tfrac23 16^{n-6}g_{n-6}(1+o(1))\\[0.1cm]
         \mathrm{Fano} & \frac{16}{7}16^{n-7}g_{n-7}(1+o(1))\\[0.1cm]
         \mathrm{WQH}_8 & \tfrac{20213}{576}72^{n-8}g_{n-8}(1+o(1)) 
         \\[0.1cm]
         \mathrm{GM}_8 & \tfrac{319}{280}72^{n-8}g_{n-8}(1+o(1))
            
    \end{array}$
    \caption{Asymptotic number of non-isomorphic graphs that have a cospectral mate per $Q$-switching.}
    \label{tab:appliedformula}
\end{table}
A switching method with a larger switching set enforces more conditions on the vertices in the graph, so intuitively we expect a method with a larger switching set to produce fewer cospectral graphs. The obtained expressions in Table     \ref{tab:appliedformula} clearly support this intuition.

We also note that each of the switching methods, except Fano switching, fall within the same family that we introduce in Section \ref{sec:infinitefam}. We can treat this family in a unified way to avoid repeating arguments for $\mathcal{V}_Q$ and being distinguishing for each individual switching method.

\subsection{Adding bounds for different switching methods}\label{sec:addbounds}

In this section we discuss when we can add the asymptotic bounds of Theorem~\ref{thm:generalasymp} for different switching methods. This allows us to aggregate the results per $Q$-switching, which is the normal way to distinguish the switching methods.

\begin{lemma}\label{lem:combine}
    If $(Q, \Gamma_1)$-switching and $(Q,\Gamma_2)$-switching are switching methods such that $\Gamma_1$ and $\Gamma_2$ are non-isomorphic graphs, then the number of graphs that have a cospectral mate through either $(Q, \Gamma_1)$-switching or $(Q,\Gamma_2)$-switching is
    \[
    \left(\frac{1}{\aut_Q(\Gamma_1)} + \frac{1}{\aut_Q(\Gamma_2)}\right)|\mathcal{V}_Q|^{n-m}g_{n-m}(1-o(1)).
    \]
\end{lemma}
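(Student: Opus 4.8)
The plan is to reduce Lemma~\ref{lem:combine} to two applications of Theorem~\ref{thm:generalasymp}, together with the observation that the two families of graphs being counted are almost disjoint up to isomorphism. First I would restate the quantity to be bounded: a graph on $n$ vertices that has a cospectral mate through either $(Q,\Gamma_1)$-switching or $(Q,\Gamma_2)$-switching is counted, and by inclusion–exclusion this count equals
\[
N_1 + N_2 - N_{12},
\]
where $N_i$ is the number of graphs with a cospectral mate via $(Q,\Gamma_i)$-switching and $N_{12}$ is the number of graphs that admit \emph{both} such switchings. Theorem~\ref{thm:generalasymp} (applicable since both methods are distinguishing and produce a cospectral mate — one should note these as hypotheses of the lemma, or inherit them) gives $N_i = \frac{1}{|\aut_Q(\Gamma_i)|}|\mathcal{V}_Q|^{n-m}g_{n-m}(1+o(1))$. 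So the whole lemma comes down to showing $N_{12} = o(1)|\mathcal{V}_Q|^{n-m}g_{n-m}$, i.e.\ that only an asymptotically negligible fraction of graphs carry switching sets inducing \emph{both} $\Gamma_1$ and $\Gamma_2$.

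Second, I would bound $N_{12}$ by reusing the machinery already set up in Section~\ref{sec:thmproof}, in particular Lemma~\ref{lem:sizeS'} and the uniqueness-of-switching ideas behind $\mathcal{S}_n'$. The point is: for a graph that admits a $(Q,\Gamma_1)$-switching on the tuple $[m]$ (which, up to isomorphism, we may assume), admitting \emph{also} a $(Q,\Gamma_2)$-switching means there is some $m$-subset $Y$ of $[n]$ whose induced subgraph is $\Gamma_2$ and on which $Q$-switching is possible. Since $\Gamma_1\not\cong\Gamma_2$, this set $Y$ is different from $[m]$; pick $y\in Y\setminus[m]$. Then exactly as in the proof of Lemma~\ref{lem:sizeS'}, Lemma~\ref{lem:uniqueext} shows that the adjacencies of $y$ are forced once the graph on $[n]\setminus\{y\}$ and the set $Y$ are fixed, so the number of such graphs is at most $m!\binom{n}{m}|\mathcal{T}_{n-1}|$ — which is $o(1)|\mathcal{T}_n|$ because $|\mathcal{T}_n|=|\mathcal{V}_Q|^{n-m}g_{n-m}$ grows faster than exponentially in $n$ by Lemma~\ref{lem:sizeS}. (One could also phrase this directly: the graphs with both switchings lie, up to isomorphism, in $\mathcal{T}_n(Q,\Gamma_1)\setminus\mathcal{S}_n'(Q,\Gamma_1)$, which Lemma~\ref{lem:sizeS'} already shows is an $o(1)$-fraction.)

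Third, I would assemble the estimate: combining the two applications of Theorem~\ref{thm:generalasymp} with the bound on $N_{12}$,
\[
N_1+N_2-N_{12} = \left(\frac{1}{|\aut_Q(\Gamma_1)|}+\frac{1}{|\aut_Q(\Gamma_2)|}\right)|\mathcal{V}_Q|^{n-m}g_{n-m}(1+o(1)) - o(1)|\mathcal{V}_Q|^{n-m}g_{n-m},
\]
and since $\frac{1}{|\aut_Q(\Gamma_1)|}+\frac{1}{|\aut_Q(\Gamma_2)|}$ is a positive constant, the error terms absorb into a single $(1\pm o(1))$ factor. I would also remark that the statement as written uses $\aut_Q(\Gamma_i)$ to mean $|\aut_Q(\Gamma_i)|$ in the fractions, and presumably means $(1+o(1))$ (or is stated as a lower bound with $(1-o(1))$ matching Lemma~\ref{lem:lowerbound} and an upper bound with $(1+o(1))$ as in Lemma~\ref{lem:upperbound}); I would make that consistent. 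The main obstacle is the disjointness argument for $N_{12}$: one must be careful that ``has a cospectral mate through $(Q,\Gamma_1)$-switching'' is a property of an isomorphism class, and that the counting of graphs admitting a second switching set on a different tuple $Y$ really does fall under the forcing argument of Lemma~\ref{lem:uniqueext} — in particular one must handle the case where $Y$ overlaps $[m]$, which is why choosing $y\in Y\setminus[m]$ (nonempty since $\Gamma_1\not\cong\Gamma_2$ forces $Y\neq[m]$, but $|Y\setminus[m]|\geq 1$ also needs $Y\neq[m]$, which holds) and applying the lemma to the vertex $y$ is the right move. Everything else is bookkeeping with the asymptotic orders already established in Lemmas~\ref{lem:sizeS} and~\ref{lem:sizeS'}.
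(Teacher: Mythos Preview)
Your proposal is correct and follows essentially the same route as the paper's proof: both arguments reduce the lemma to showing that the overlap $N_{12}$ is $o(|\mathcal{V}_Q|^{n-m}g_{n-m})$, and both establish this by observing that $\Gamma_1\not\cong\Gamma_2$ forces any $(Q,\Gamma_2)$-switching tuple $Y$ to satisfy $Y\neq[m]$, then picking $y\in Y\setminus[m]$ and invoking Lemma~\ref{lem:uniqueext} exactly as in the proof of Lemma~\ref{lem:sizeS'}. The only cosmetic difference is that the paper phrases the upper and lower bounds directly in terms of $\mathcal{S}_n'(Q,\Gamma_1)$ and $\mathcal{S}_n'(Q,\Gamma_2)$ rather than via inclusion--exclusion on the $N_i$, but the substance is identical.
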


\begin{proof}
    The given expression, which is $|\mathcal{S}_n'(Q,\Gamma_1)|+|\mathcal{S}_n'(Q,\Gamma_2)|$, is clearly an upper bound. Recall that $[m]$ is the $(Q,\Gamma_1)$-switching tuple for each of the graphs in $\mathcal{S}_n(Q,\Gamma_1)$. For any $A \in \mathcal{S}_n(Q,\Gamma_1)$ the $(Q,\Gamma_2)$-switching set $Y$ cannot be the set $[m]$, because $A[[m]]$ is the adjacency matrix of $\Gamma_1$, which is not isomorphic to $\Gamma_2$. Hence we can choose a vertex $y$ in $Y \setminus [m]$. By Lemma~\ref{lem:uniqueext} the $y$ column in $A$ is uniquely determined by the rest of the matrix, so the same argument as used in Lemma~\ref{lem:sizeS'} shows that the number of graphs in $\mathcal{S}_n(Q,\Gamma_1)$ that have a $(Q,\Gamma_2)$-switching set is $o(|\mathcal{S}_n(Q_1,\Gamma_1)|)$. Thus the number of isomorphism classes of graphs that are in $\mathcal{S}_n'(Q,\Gamma_1)$ and $\mathcal{S}_n'(Q,\Gamma_2)$ is negligible compared to the number of isomorphism classes in either set. Therefore, the sum $|\mathcal{S}_n'(Q,\Gamma_1)|+|\mathcal{S}_n'(Q,\Gamma_2)|$ is also a lower bound up to a $o(1)$ term.
\end{proof}

Lemma~\ref{lem:combine} is useful for adding switching methods for the same $Q$. By induction, this addition property holds for any finite amount of $Q$-switching graphs, so Lemma~\ref{lem:combine} can be used to combine results for the same $Q$ as we will see in the next sections.

\begin{remark}
    One can wonder if it is necessary to assume that $\Gamma_1$ and $\Gamma_2$ are non-isomorphic in Lemma~\ref{lem:combine}. This is not necessary as long as we can deduce that the $(Q,\Gamma_2)$-switching set cannot be the set $[m]$. However, it is a priori possible that $\Gamma_1$ and $\Gamma_2$ are distinct but isomorphic graphs that have the same $Q$-respecting vectors but such that no isomorphism fixes the $Q$-respecting vectors. In this case they have to be considered separate switching methods that cannot be added together. Note that this is exactly why we needed to talk about $(Q,\Gamma)$-switching tuples instead of sets. If such $\Gamma_1$ and $\Gamma_2$ exist, then the $(Q,\Gamma_1)$- and $(Q,\Gamma_2)$-switching applied to the same graph could produce non-isomorphic graphs. No example of $\Gamma_1$ and $\Gamma_2$ like this is known to the authors.
\end{remark}

The situation is also more tricky if one wishes to combine results for different $Q$. If a certain $(Q_1,\Gamma_1)$-switching is reducible, then by Definition~\ref{def:reducibility} there is a $(Q_2,\Gamma_2)$-switching that produces a cospectral mate for the same graph. Even for two indecomposable matrices $Q_1,Q_2$ of the same size there can still be significant overlap if $\mathcal{V}_{Q_1}$ and $\mathcal{V}_{Q_2}$ are the same up to a permutation that also sends $\Gamma_1$ to $\Gamma_2$. The authors are not aware of any such pair of matrices, but could not prove these do not exist.

\subsection{Infinite family}\label{sec:infinitefam}

All regular orthogonal matrices of level 2 are classified \cite{level2mats}. They consist of two sporadic cases and an infinite family of matrices given by 

\begin{equation*}
    \frac{1}{2}\left[
\arraycolsep=2.5pt\def\arraystretch{.9}\begin{array}{cccccc}
J       & O      & \cdots & \cdots & O      & Y      \\
Y       & J      & O      & \cdots & \cdots & O      \\
O       & Y      & J      & O      & \cdots & O      \\
        & \ddots & \ddots & \ddots & \ddots &        \\
O       & \cdots & O      & Y      & J      & O      \\
O       & \cdots & \cdots & O      & Y      & J
\end{array}
\right].
\end{equation*}
This infinite family can be extended to higher levels in such a way to include both GM- and WQH-switching. We define $Q(a,b,c)$ for integers $a,b$ and $c$ with $b\geq 2$, $c \in \{-1,1\}$ to be the $ab\times ab$-matrix (consisting of $b$ blocks per row) \begin{equation*} Q(a,b,c) :=
    \frac{1}{a}\left[
\arraycolsep=2.5pt\def\arraystretch{.9}\begin{array}{cccccc}
J_a       & O_a      & \cdots & \cdots & O_a      & cY_a      \\
cY_a       & J_a      & O_a      & \cdots & \cdots & O_a      \\
O_a       & cY_a      & J_a      & O_a      & \cdots & O_a      \\
        & \ddots & \ddots & \ddots & \ddots &        \\
O_a       & \cdots & O_a      & cY_a      & J_a      & O_a      \\
O_a       & \cdots & \cdots & O_a      & cY_a      & J_a
\end{array}
\right].
\end{equation*}  One can check that all these matrices are regular orthogonal, that $Q_{\text{\GM{2k}}} = Q(k,2,-1)$ and that $Q_{\text{\WQH{2k}}} = Q(k,2,1)$. Note that $Q(2,b,1)$ and $Q(2,b,-1)$ are equivalent matrices for any $b\geq 2$, so these give the same switching method. This family of orthogonal matrices includes all the matrices studied in \cite{switchingpaper} and \cite{MaoSimilar} except Fano switching.

The rest of this section prepares the proof of the values in Table~\ref{tab:appliedformula} for all $Q(a,b,c)$-switching methods. Lemma~\ref{lem:qrespinffam} and Lemma~\ref{lem:rqinffam} determine $|\mathcal{V}_Q|$. Whether the methods are switching-distinguishing and distinguishing is discussed in Lemma~\ref{lem:distinffam}.

\begin{lemma}\label{lem:qrespinffam}
    Let $Q=Q(a,b,c)$. For any subset $I \subseteq [m]$ and vector $\v \in \{0,1\}^m$, let $\sum v_I := \sum_{i \in I} v_i$. Let $C_i := \{(i-1)a+1,\dots, ia\}$ represent the set of indices corresponding to one block of $Q$. The $Q$-respecting vectors are exactly those vectors for which $\sum_{j \in C_i} v_j \equiv c\sum_{j \in C_{i+1}} v_j \pmod{a}$ for all $i \in \mathbb{Z}/b\mathbb{Z}$. 
\end{lemma}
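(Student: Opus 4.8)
The plan is to compute $Q^T\v$ explicitly using the block structure of $Q=Q(a,b,c)$ and check when it lands in $\{0,1\}^m$. Since $Q$ is regular orthogonal, $Q^T = Q^{-1}$ is again regular; concretely, $aQ^T$ is the transpose of the circulant-like block matrix, so its block in position $(i,j)$ is $J_a$ when $j=i$, $cY_a$ when $j=i-1$, and $O_a$ otherwise (indices mod $b$). Therefore, writing $\v=(\v_1,\dots,\v_b)$ in block form with $\v_i\in\{0,1\}^a$ indexed by $C_i$, the $i$-th block of $aQ^T\v$ equals $J_a\v_i + cY_a\v_{i-1} = (\sum v_{C_i})\j_a + c(a\v_{i-1} - (\sum v_{C_{i-1}})\j_a)$, using $Y_a = aI_a - J_a$.

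Next I would read off the integrality condition entrywise. The term $ca\v_{i-1}$ is already an integer multiple of $a$ (and is $\{0,ca\}$-valued entrywise since $\v_{i-1}$ is $\{0,1\}$-valued), so modulo $a$ the $i$-th block of $aQ^T\v$ is congruent to $\bigl(\sum v_{C_i} - c\sum v_{C_{i-1}}\bigr)\j_a$. Hence $Q^T\v$ has all integer entries if and only if $\sum v_{C_i} \equiv c\sum v_{C_{i-1}} \pmod a$ for every $i\in\mathbb{Z}/b\mathbb{Z}$, which (shifting the index) is the stated congruence $\sum_{j\in C_i} v_j \equiv c\sum_{j\in C_{i+1}} v_j \pmod a$. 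Then I would argue that integrality of $Q^T\v$ already forces it to be a $(0,1)$-vector: its entries sum to $\j^T Q^T \v$, and since $Q$ is regular with row sum $1$ (so $Q\j = \j$, giving $\j^T Q^T = \j^T$), this sum equals $\j^T\v = \sum_i v_i$, the same number of ones as $\v$; combined with the fact that $Q$ orthogonal preserves the Euclidean norm, so $\|Q^T\v\|^2 = \|\v\|^2 = \sum_i v_i$ as well, an integral vector with equal sum of entries and sum of squares of entries must be $(0,1)$-valued. This shows the $Q$-respecting condition ($Q^T\v\in\{0,1\}^m$) is equivalent to mere integrality, so the congruence characterization is complete.

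I expect the main obstacle to be purely bookkeeping: getting the transpose of the block-circulant $Q(a,b,c)$ right (the off-diagonal $cY_a$ blocks move from the sub-diagonal to the super-diagonal under transposition, with a corresponding shift in which $C_i$ couples to which), and correctly expanding $Y_a\v_{i-1}$ to isolate the part that survives modulo $a$. A secondary point worth stating carefully is why integrality suffices for the $(0,1)$ conclusion; the norm-and-sum argument above handles it cleanly, but one should note it uses both orthogonality and regularity of $Q$, both of which hold for $Q(a,b,c)$ as remarked in the text. No deeper difficulty is anticipated.
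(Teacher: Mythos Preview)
Your approach is essentially the paper's: compute $aQ^T\v$ blockwise, reduce modulo $a$, and read off the congruence. You did make the bookkeeping slip you anticipated: transposing $Q(a,b,c)$ moves the $cY_a$ blocks from the sub-diagonal to the super-diagonal, so block $(i,j)$ of $aQ^T$ is $cY_a$ when $j=i+1$, not $j=i-1$; the $i$-th block of $aQ^T\v$ is therefore $J_a\v_i + cY_a\v_{i+1}$, which yields $\sum v_{C_i}\equiv c\sum v_{C_{i+1}}\pmod a$ directly with no index shift needed. Your version still lands on the correct condition, but only because $c\in\{-1,1\}$ gives $c=c^{-1}$, so your ``shifting the index'' is silently also multiplying through by $c$; this deserves to be said explicitly.

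For the step ``integral $\Rightarrow$ $(0,1)$-valued'', the paper argues more concretely: each entry of $aQ^T\v$ lies in the interval $[1-a,\,2a-1]$ (the positive entries in any row of $aQ^T$ sum to $2a-1$ and the negative entries to $1-a$), and the only multiples of $a$ in that range are $0$ and $a$. Your norm-and-sum argument (an integral vector $w$ with $\sum w_i=\sum w_i^2$ must satisfy $\sum w_i(w_i-1)=0$, forcing each $w_i\in\{0,1\}$) is a clean and more general alternative that uses only regularity and orthogonality of $Q$, not its specific block shape.
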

\begin{proof}
    Consider a (0,1)-vector $\v$. This vector is $Q$-respecting exactly when each entry of $aQ^T\v$ is $0$ or $a$. Note that the sum of all positive entries in a row of $aQ^T$ is $2a-1$ and the sum of all negative entries is $1-a$. Thus any entry of $aQ^T\v$ is somewhere in the closed interval $[1-a,2a-1]$, so $\v$ is $Q$-respecting if and only if $aQ^T\v$ is zero modulo $a$. Any entry $(aQ^T\v)_j$ with $j \in \{1,\dots, a\}$ comes from $(J_a, cY_a,O,\dots,O)\v \equiv  (J_a, -cJ_a, O,\dots,O)\v \pmod{a}$. The vector $\v$ is $Q$-respecting when this is zero modulo $a$, so $\sum_{j \in C_1} v_j \equiv c\sum_{j \in C_2} v_j \pmod{a}$. The lemma follows from analogous statements for the other indices. 
\end{proof}

\begin{lemma}\label{lem:rqinffam}
Let $Q = Q(a,b,c)$, then
\[
     |\mathcal{V}_Q| = \begin{cases} 2^b + \binom{a}{a/2}^b, & \text{ if } (a,b,c) = (\text{even}, \text{odd}, -1),\\
    2^b, & \text{ if } (a,b,c) = (\text{odd}, \text{odd}, -1),\\
    2^b + \sum_{i = 1}^{a-1} \binom{a}{i}^b, & \text{ else.} \end{cases}
\]
\end{lemma}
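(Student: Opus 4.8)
The plan is to count the $Q$-respecting vectors using the characterization from Lemma~\ref{lem:qrespinffam}: a vector $\v \in \{0,1\}^m$ is $Q$-respecting if and only if, writing $s_i := \sum_{j \in C_i} v_j$ for the weight of $\v$ on block $C_i$, we have $s_i \equiv c\, s_{i+1} \pmod a$ for all $i \in \mathbb{Z}/b\mathbb{Z}$. The key observation is that the number of vectors with a prescribed weight vector $(s_1,\dots,s_b)$ is $\prod_{i=1}^b \binom{a}{s_i}$, so $|\mathcal{V}_Q|$ is a sum of such products over all admissible residue patterns. First I would classify which tuples $(s_1,\dots,s_b) \in \{0,\dots,a\}^b$ are admissible, then group them and evaluate the resulting sum of products of binomial coefficients, splitting into the three cases of the statement.

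For the case analysis, I would distinguish according to the relation $s_i \equiv c\, s_{i+1} \pmod a$. When $c = 1$, the condition forces all $s_i$ to be congruent modulo $a$; since each $s_i \in \{0,\dots,a\}$, this means either all $s_i \in \{0,a\}$ (giving $2^b$ such tuples, each contributing a product of $\binom{a}{0}$'s and $\binom{a}{a}$'s, i.e.\ $1$) or all $s_i$ equal to a common value $r \in \{1,\dots,a-1\}$ (contributing $\binom{a}{r}^b$ for each such $r$). Summing gives $2^b + \sum_{r=1}^{a-1}\binom{a}{r}^b$, which covers the ``else'' branch when $c=1$. When $c = -1$ and $b$ is even, composing the relations around the cycle gives $s_i \equiv s_i \pmod a$ back again, so again all $s_i$ are congruent modulo $a$ and the same count applies. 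When $c = -1$ and $b$ is odd, composing around the cycle yields $s_1 \equiv -s_1 \pmod a$, i.e.\ $2s_1 \equiv 0 \pmod a$; if $a$ is odd this forces $s_1 \equiv 0$, hence all $s_i \in \{0,a\}$ and we get only the $2^b$ term, while if $a$ is even the extra possibility $s_1 \equiv a/2$ appears, and then consistency around the odd cycle forces every $s_i = a/2$ (the value $a/2$ is self-negating mod $a$, and it is the unique value in $\{1,\dots,a-1\}$ that is), contributing $\binom{a}{a/2}^b$; together with the $2^b$ this gives the first branch.

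The main obstacle is the bookkeeping in the $c=-1$ odd-$b$ analysis: one must be careful that the chain of congruences $s_1 \equiv -s_2 \equiv s_3 \equiv \cdots$ around an odd cycle is consistent, and that when $a$ is even the only ``new'' residue beyond $\{0,a\}$ that survives is $a/2$ — in particular $s_i$ and $a - s_i$ are not independently free, because $s_i \equiv c\,s_{i+1}$ pins down the residue exactly, not just up to sign, once we fix one entry. I would verify this by tracking that an admissible residue pattern is determined by $s_1 \bmod a$ together with the requirement that every residue in the pattern lies in $\{0,\dots,a\}$ and that the product $c^b$ closes the loop; since $c^b = c = -1$ forces $s_1 \equiv -s_1$, the set of admissible values of $s_1 \bmod a$ is exactly the $2$-torsion of $\mathbb{Z}/a\mathbb{Z}$, which is $\{0\}$ for odd $a$ and $\{0, a/2\}$ for even $a$. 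Finally, I would double-check the formulas against $Q_{\mathrm{GM}_4} = Q(2,2,-1)$, $Q_{\mathrm{GM}_6} = Q(3,2,-1)$, and $Q_{\mathrm{WQH}_6} = Q(3,2,1)$ to confirm the counts $|\mathcal{V}_Q| = 8, 22, 22$ recorded in Table~\ref{tab:appliedformula}, which also serves as a sanity check on the edge conventions ($s_i \in \{0,\dots,a\}$ including both endpoints).
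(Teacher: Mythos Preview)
Your approach is the same as the paper's: invoke Lemma~\ref{lem:qrespinffam}, write $s_i = \sum_{j\in C_i} v_j$, and count admissible weight-tuples $(s_1,\dots,s_b)$ weighted by $\prod_i\binom{a}{s_i}$, splitting into cases on $(c,\text{parity of }b,\text{parity of }a)$. The cases $c=1$ and $(c,b)=(-1,\text{odd})$ are handled correctly and match the paper.

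There is, however, a genuine slip in your $c=-1$, $b$ even case. From $s_i \equiv -s_{i+1}\pmod a$ you get $s_i \equiv (-1)^{i-1}s_1$, so the closure condition around the even cycle is automatically satisfied --- but that does \emph{not} imply ``all $s_i$ are congruent modulo $a$''. They alternate: odd-indexed $s_i \equiv s_1$ and even-indexed $s_i \equiv -s_1$. If $s_1 = t \in \{1,\dots,a-1\}$ this forces $s_i = t$ for $i$ odd and $s_i = a-t$ for $i$ even; the product of binomials is then $\binom{a}{t}^{b/2}\binom{a}{a-t}^{b/2} = \binom{a}{t}^b$ by the symmetry $\binom{a}{t}=\binom{a}{a-t}$, which is the step that actually makes ``the same count apply''. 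The paper spells this out explicitly. Your final formula is correct, but the justification you wrote for that case is not; replace the false congruence claim by the alternation-plus-binomial-symmetry argument and the proof goes through.
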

\begin{proof}
    We consider the implications of Lemma~\ref{lem:qrespinffam} for the different cases in this lemma. Note that by induction, $\v$ is $Q$-respecting if and only if $\sum_{j \in C_i} v_j \equiv c^{k-i}\sum_{j \in C_k} v_j \pmod{a}$ for all $i,k$. 
\begin{description}
    \item[Case $c = 1$.] All $\sum_{j \in C_i} v_j$ are equal modulo $a$. Note that $0 \leq \sum_{j \in C_i} v_j \leq a$. There are $\binom{a}{t}$ ways for a block to have $\sum_{j \in C_i} v_j = t$. In case they all have the same value $t \in \{1,\dots, a-1\}$, there are $\binom{a}{t}^b$ options for $\v$. In case all $\sum_{j \in C_i} v_j$ are divisible by $a$ then each block can sum to $0$ or $a$, so all entries need to be the same. This gives $2^b$ options. Adding these options concludes this case.
    \item[Case $c = -1$ and $b$ even.] The blocks $C_i$ can be split into two classes depending on whether $i$ is even or odd. Lemma~\ref{lem:qrespinffam} then implies that all $\sum_{j \in C_i} v_j$ in the same class must have the same value  modulo $a$, which is minus the value of the opposite class. If $\sum_{j \in C_1} v_j = t \in \{1,\dots, a-1\}$, then $\sum_{j \in C_2} v_j = a-t$ and there are $\binom{a}{t} = \binom{a}{a-t}$ options for each of the $\sum_{j \in C_i} v_j$. This gives $\binom{a}{t}^b$ for each $t$. Again if $a \mid \sum_{j \in C_1} v_j$, then we get $2^b$ options in total.
    \item[Case $c = -1$ and $b$ odd.] In this case $$\sum_{j \in C_i} v_j \equiv \sum_{j \in C_{i+b}} v_j \equiv (-1)^b\sum_{j \in C_i} v_j \equiv -\sum_{j \in C_i} v_j \pmod{a},$$ so $a \mid 2\sum_{j \in C_i} v_j$ for each $i$ and $\sum_{j \in C_i} v_j \equiv \sum_{j \in C_k} v_j \pmod{a}$ for all $i,k$. If $a$ is odd, then $a \mid \sum_{j \in C_i} v_j$ for all $i$, which gives $2^b$ options. If $a$ is even, then one can also have $\sum_{j \in C_i} v_j = \frac{a}{2}$, which gives a further $\binom{a}{a/2}^b$ $Q$-respecting vectors.
\end{description}
\end{proof}

\begin{lemma}\label{lem:distinffam}
    Let $Q = Q(a,b,c)$ where \(a\geq2\) and $(a,b,c)$ is not $(\text{odd}, \text{odd}, -1)$. All $(Q,\Gamma)$-switchings are distinguishing and switching-distinguishing for all $Q$-switching graphs $\Gamma$.
\end{lemma}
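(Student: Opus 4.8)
The plan is to verify the two conditions from Definition~\ref{def:distinguishing} separately, and to use the explicit description of $\mathcal{V}_Q$ from Lemma~\ref{lem:qrespinffam} together with the block structure of $Q(a,b,c)$ to pin down exactly which permutations can possibly act appropriately. First I would observe that $\aut_Q(\Gamma)$ is by definition a group of permutations of $[m]$ that stabilises $\mathcal{V}_Q$ as a set, so to prove that it acts faithfully on $\mathcal{V}_Q$ it suffices to show that the only permutation in $\aut(\Gamma)$ that fixes every $Q$-respecting vector pointwise is the identity. The key structural input is that $\mathcal{V}_Q$ is large: by Lemma~\ref{lem:qrespinffam} it contains every $(0,1)$-vector whose block-sums are all equal (when $c=1$) or alternate between $t$ and $a-t$ (when $c=-1$), and in particular (taking all block-sums $0$ or $a$) it contains all $2^b$ vectors that are constant on each block $C_i$. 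These $2^b$ vectors already let me detect any permutation that moves a vertex from one block $C_i$ to another. To detect permutations that act within a single block, I would use vectors such as the indicator $\1_S$ of a set $S$ meeting each block in the same number $t$ of indices with $1\le t\le a-1$ — these lie in $\mathcal{V}_Q$ in every case under consideration — and note that a permutation fixing all of them pointwise must fix every index, since for any single coordinate $j$ I can find such an $S$ containing $j$ and another not containing $j$. This gives faithfulness, i.e. the distinguishing property.

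For the switching-distinguishing property I would argue by contradiction: suppose there is a permutation $\phi:[m]\to[m]$ with $\phi\cdot A(\Gamma) = Q^TA(\Gamma)Q$ and $v_i = (Q^T\v)_{\phi(i)}$ for all $\v\in\mathcal{V}_Q$ and all $i$. The second condition says that the linear map $\v\mapsto Q^T\v$, restricted to $\mathcal{V}_Q$, agrees with the coordinate permutation $P_\phi$. I would again feed in the $2^b$ block-constant vectors: for such a $\v$, writing $\v$ as the vector with value $\varepsilon_i\in\{0,1\}$ on block $C_i$, one computes $aQ^T\v$ block by block using the row pattern $(J_a,cY_a,O,\dots,O)$ etc., so the block of $Q^T\v$ on $C_i$ is again constant, equal to $\varepsilon_i - c\,\varepsilon_{i-1}$ reduced appropriately — in fact equal to $\varepsilon_i$ if $c\varepsilon_{i-1}=0$ and to $1-\varepsilon_i$ pattern otherwise; the upshot is that the map $(\varepsilon_1,\dots,\varepsilon_b)\mapsto$ (block-constant value vector of $Q^T\v$) is a specific affine bijection of $\{0,1\}^b$ that is visibly not a coordinate permutation (for $c=1$ it is $\varepsilon_i\mapsto \varepsilon_i\oplus\varepsilon_{i-1}$ composed with sign adjustments, for $c=-1$ similar with a shift), which already contradicts the existence of a permutation $\phi$ realising it. Alternatively, and perhaps more cleanly, I would invoke the Lemma just before Definition~\ref{def:distinguishing} stating that any switching method producing a cospectral mate is switching-distinguishing, and reduce to checking that each $Q(a,b,c)$ (with $(a,b,c)\neq(\text{odd},\text{odd},-1)$) does produce a cospectral mate — but since the clean self-contained route is to rule out $\phi$ directly via the block-constant vectors, I would present that.

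Concretely the order of steps is: (1) record that $\aut_Q(\Gamma)\le\aut(\Gamma)$ consists of permutations stabilising $\mathcal{V}_Q$; (2) exhibit, from Lemma~\ref{lem:qrespinffam}, enough explicit $Q$-respecting vectors — the $2^b$ block-constant ones, plus for each coordinate $j$ a vector in $\mathcal{V}_Q$ that is $1$ at $j$ and a companion that is $0$ at $j$ — to conclude that a permutation fixing $\mathcal{V}_Q$ pointwise is the identity, giving the distinguishing property; (3) for switching-distinguishing, suppose $\phi$ as in the definition exists, restrict the identity $v_i=(Q^T\v)_{\phi(i)}$ to the block-constant vectors, compute both sides via the block structure, and derive that $\phi$ would have to realise a non-permutation affine map of $\{0,1\}^b$, a contradiction; (4) remark that the excluded case $(\text{odd},\text{odd},-1)$ is exactly the degenerate one where $|\mathcal{V}_Q|=2^b$ consists only of block-constant vectors and the switching is trivial, which is why it must be excluded. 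The main obstacle I anticipate is step (3): making the computation of $Q^T\v$ on block-constant vectors clean enough to see transparently that no coordinate permutation can reproduce it, and handling the $c=1$ versus $c=-1$ (and $b$ even versus odd) cases uniformly rather than by a tedious case split — I would aim to phrase it so that the single identity "$Q^T$ acts on block-constant vectors as a fixed affine map $T$ of $\{0,1\}^b$, and $T$ is not induced by any permutation of blocks" does all the work, with the within-block freedom only used in step (2).
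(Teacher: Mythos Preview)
Your approach to the distinguishing property (step~2) is essentially right and close to the paper's, though there are two wrinkles: for $c=-1$ the vectors with all block-sums equal to~$t$ are not $Q$-respecting unless $2t\equiv0\pmod a$ (you noted this earlier but then slipped), and the justification you give --- finding an $S$ containing $j$ and another not --- is not the relevant condition: what you actually need is that for any distinct $j,k$ there is some $\v\in\mathcal{V}_Q$ with $v_j\neq v_k$, which is still easy from the explicit vectors but deserves a sentence.

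Step~(3), however, has a genuine error that breaks the argument. For a block-constant $\v$ with value $\varepsilon_i$ on $C_i$, the $C_i$-block of $aQ^T\v$ is $J(\varepsilon_i\j)+cY(\varepsilon_{i+1}\j)=a\varepsilon_i\j$, because $Y_a\j_a=(aI_a-J_a)\j_a=0$. Hence $Q^T\v=\v$ for \emph{every} block-constant $\v$, and your ``specific affine bijection of $\{0,1\}^b$'' is simply the identity. Restricting the condition $v_i=(Q^T\v)_{\phi(i)}$ to these vectors only forces $\phi$ to preserve the block partition --- no contradiction. The paper therefore has to work with non-block-constant vectors: it builds $\y,\z\in\mathcal{V}_Q$ from a vector $\s$ with $\lfloor a/2\rfloor$ ones (such vectors exist exactly when $(a,b,c)\neq(\mathrm{odd},\mathrm{odd},-1)$), arranges that position~$1$ is the unique coordinate carrying a $1$ in both $\x,\y$ and a $0$ in $\z$, computes $Q^T\x,Q^T\y,Q^T\z$ explicitly, and observes that no coordinate has that pattern among the images --- so no $\phi$ can exist. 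Your alternative route via the lemma that any mate-producing switching is switching-distinguishing would work in principle, but you dismissed it in favour of the block-constant computation, which does not.
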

\begin{proof}
    Let $\s\neq\t$ be any two vectors of length \(a\) with exactly $\left\lfloor\frac{a}{2}\right\rfloor$ ones and $\left\lceil\frac{a}{2}\right\rceil$ zeros such that \(\s\) has a one in the first position and \(\t\) has a zero in the first position (here we need \(a\geq2\)). Let $\o,\j$ be the all-zeros and all-ones vectors of length $a$ respectively. The following vectors are $Q$-respecting vectors whenever \((a,b,c)\neq(\text{odd}, \text{odd}, -1)\), see Lemma~\ref{lem:qrespinffam} and Lemma~\ref{lem:rqinffam}:
    \begin{align*}
        \x&=(\j,\o,\dots,\o)^T,\\
        \y&=\begin{cases}(\s,\s,\dots,\s,\s)^T&\text{if }c=1,\\(\s,\j-\s,\dots,\s,\j-\s)^T&\text{if }c=-1,\end{cases}\\
        \z&=\begin{cases}(\t,\s,\dots,\s,\s)^T&\text{if }c=1,\\(\t,\j-\s,\dots,\s,\j-\s)^T&\text{if }c=-1.\end{cases}
    \end{align*}
    The first entry is the only one that has a one in \(\y\) but not in \(\z\), so any permutation that sends $Q$-respecting vectors to themselves, needs to fix the first entry. For any other entry there exists a similar pair of vectors that ensure that this entry needs to be fixed (by symmetry of the condition in Lemma~\ref{lem:qrespinffam}). Therefore, the only permutation that maps $\v\to\v$ for every $\v\in\mathcal{V}_Q$ is the identity and thus any $(Q,\Gamma)$-switching is distinguishing. 

    Observe that \(Q^T\x=\x\), \(Q^T\y=\y\), and
    $$Q^T\z=\begin{cases}(\s,\s,\dots,\s,\t)^T&\text{if }c=1,\\(\s,\j-\s,\dots,\s,\j-\t)^T&\text{if }c=-1.\end{cases}$$
    The first entry has a one in \(\x\) and \(\y\) but not in \(\z\), while there are no entries that have a one in \(Q^T\x\) and \(Q^T\y\) but a zero in \(Q^T\z\). It follows that there is no permutation that sends $Q^T\v\to\v$ for each $\v\in \{\x,\y,\z\}$. Hence $Q$ is switching-distinguishing.
\end{proof}

We can now apply this general treatise of the infinite family to the particular cases of GM-, WQH- and AH-switching. We do this for all these switching methods up to size 8.

\subsubsection{\texorpdfstring{GM$_4$-switching}{GM4-switching}}

In \cite{enumeration} Haemers and Spence found $\tfrac{1}{24}n^3g_{n-1}(1-o(1))$ as a lower bound for the number of cospectral mates obtained by \GM4-switching. Theorem~\ref{thm:generalasymp} counts the graphs with a cospectral mate in a different way. Here we show that we arrive at the same bound.

\begin{corollary}
    The number of graphs on $n$ vertices with a cospectral mate through \GM4-switching is
\[
 \frac1{24}n^3g_{n-1}(1+o(1)).
\]
\end{corollary}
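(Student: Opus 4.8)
The plan is to apply Theorem~\ref{thm:generalasymp} to the matrix $Q = Q_{\text{\GM4}} = Q(2,2,-1)$, which is a $4\times 4$ matrix, so $m = 4$. We first need $|\mathcal{V}_Q|$. By Lemma~\ref{lem:rqinffam} with $(a,b,c) = (2,2,-1)$, which falls into the ``else'' case (since $b=2$ is even), we get $|\mathcal{V}_Q| = 2^2 + \binom{2}{1}^2 = 4 + 4 = 8$. Next, Lemma~\ref{lem:distinffam} applies (here $a = 2 \geq 2$ and $(2,2,-1)$ is not $(\text{odd},\text{odd},-1)$), so every $(Q,\Gamma)$-switching is distinguishing and switching-distinguishing; in particular, since \GM4-switching is known to produce a cospectral mate, the hypotheses of Theorem~\ref{thm:generalasymp} are met for each $Q$-switching graph $\Gamma$.

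Then I would enumerate the $Q$-switching graphs $\Gamma$ and compute $|\aut_Q(\Gamma)|$ for each. A \GM4-switching set has $2k = 4$ vertices inducing a regular graph, and the induced subgraph $B$ must satisfy that $Q^T B Q$ is again a (0,1)-adjacency matrix; the regular graphs on $4$ vertices are the empty graph $\overline{K_4}$, the perfect matching $2K_2$, the $4$-cycle $C_4$, and the complete graph $K_4$. One checks which of these are actually fixed (as sets of switching graphs) under the switching, i.e.\ which lie in $\mathcal{B}_Q$, and computes the relevant automorphism subgroup $\aut_Q(\Gamma)$ for each — this involves noting that $\aut(\Gamma)$ acts on $\{0,1\}^4$ and taking the subgroup preserving $\mathcal{V}_Q$ (which, by the remark after the definition of $\aut_Q(\Gamma)$, contains all permutations fixing $Q$). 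Using Lemma~\ref{lem:combine} (extended by induction to finitely many $\Gamma$), the total count is $\big(\sum_\Gamma \frac{1}{|\aut_Q(\Gamma)|}\big) 8^{n-4} g_{n-4}(1+o(1))$, and Table~\ref{tab:appliedformula} records this aggregated sum as $8^{n-4}g_{n-4}(\tfrac13 + o(1))$, so $\sum_\Gamma \frac{1}{|\aut_Q(\Gamma)|} = \tfrac13$.

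Finally I would convert $8^{n-4}g_{n-4}(\tfrac13+o(1))$ into the Haemers--Spence form $\tfrac{1}{24}n^3 g_{n-1}(1+o(1))$. This uses Corollary~\ref{cor:nonisomgraphs}: $g_k = \frac{1}{k!}2^{\binom{k}{2}}(1+o(1))$, so
\[
\frac{g_{n-4}}{g_{n-1}} = \frac{(n-1)!}{(n-4)!}\cdot 2^{\binom{n-4}{2} - \binom{n-1}{2}}(1+o(1)) = n^3(1+o(1))\cdot 2^{-(3n-9)}(1+o(1)),
\]
since $\binom{n-1}{2} - \binom{n-4}{2} = 3n-9$ and $(n-1)(n-2)(n-3) = n^3(1+o(1))$. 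As $8^{n-4} = 2^{3n-12}$, we have $8^{n-4}\cdot 2^{-(3n-9)} = 2^{-3} = \tfrac18$, so $8^{n-4} g_{n-4} = \tfrac18 n^3 g_{n-1}(1+o(1))$, and multiplying by $\tfrac13$ gives $\tfrac{1}{24}n^3 g_{n-1}(1+o(1))$, as claimed.

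The main obstacle is the middle step: correctly determining the set $\mathcal{B}_Q$ of \GM4-switching graphs and computing $|\aut_Q(\Gamma)|$ for each so that the sum of reciprocals comes out to exactly $\tfrac13$. The arithmetic conversion in the last paragraph is routine bookkeeping with factorials and powers of $2$, and the verification of the hypotheses is immediate from the infinite-family lemmas; but one must be careful about which regular graphs on $4$ vertices actually admit the switching and about the precise automorphism subgroup preserving $\mathcal{V}_Q$ (as opposed to the full $\aut(\Gamma)$), since $C_4$ in particular has a large automorphism group and its contribution to the sum depends sensitively on this.
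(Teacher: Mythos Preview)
Your approach matches the paper's exactly: apply Theorem~\ref{thm:generalasymp} via the infinite-family lemmas, sum over the four regular graphs on $4$ vertices using Lemma~\ref{lem:combine}, and convert $8^{n-4}g_{n-4}$ to the $n^3 g_{n-1}$ form. The first and last steps are fine.

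The gap is precisely where you flag it: you never actually compute $\sum_\Gamma 1/|\aut_Q(\Gamma)| = \tfrac13$, but instead read it off Table~\ref{tab:appliedformula}. That table is the \emph{output} of this corollary and its siblings, so citing it here is circular. Your concern about whether $\aut_Q(\Gamma)$ might be a proper subgroup of $\aut(\Gamma)$---especially for $C_4$---is exactly the issue, and you leave it unresolved.

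The paper closes this gap with one observation: $Q_{\mathrm{GM}_4} = \tfrac12(J_4 - 2I_4)$ is invariant under conjugation by \emph{every} permutation matrix, so by the remark following the definition of $\aut_Q(\Gamma)$ one has $\aut_Q(\Gamma) = \aut(\Gamma)$ for all $\Gamma$. Hence the four regular graphs $\overline{K_4}$, $2K_2$, $C_4$, $K_4$ contribute $\tfrac{1}{24}$, $\tfrac{1}{8}$, $\tfrac{1}{8}$, $\tfrac{1}{24}$ respectively (automorphism groups $S_4$, $S_2\wr S_2$, $D_4$, $S_4$), summing to $\tfrac13$. In particular, there is no subtlety for $C_4$: its full dihedral group of order $8$ preserves $\mathcal{V}_Q$. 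Once you add this observation, your argument is complete and identical to the paper's.
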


\begin{proof}
\GM4-switching corresponds to $Q(2,2,1)$-switching, so Lemma \ref{lem:distinffam} implies that all \GM4-switching methods are distinguishing and Lemma~\ref{lem:rqinffam} implies that $|\mathcal{V}_Q| = 8$. The matrix $Q_{\mathrm{GM}_4}$ is fully symmetric, so $|\aut_{Q_{\mathrm{GM}_4}}(\Gamma)| = |\aut(\Gamma)|$.  The \GM4-switching graphs are the four regular graphs on 4 vertices.
\begin{figure}[H]
        \centering
        \newcommand{\radius}{1.3}
\begin{subfigure}[b]{0.2\textwidth}
    \centering
    \begin{tikzpicture}[remember picture]
    \path (-\radius,-\radius) rectangle (\radius,\radius);
	\path[every node/.append style={circle, fill=black, minimum size=5pt, label distance=2pt, inner sep=0pt}]
    (45:1) node (0) {}
    (135:1) node (1) {}
    (225:1) node (2) {}
    (315:1) node[label={315:\(v\)}] (3) {};
    \end{tikzpicture}
\end{subfigure}
\hfill
\begin{subfigure}[b]{0.2\textwidth}
    \centering
    \begin{tikzpicture}
    \path (-\radius,-\radius) rectangle (\radius,\radius);
	\path[every node/.append style={circle, fill=black, minimum size=5pt, label distance=2pt, inner sep=0pt}]
    (45:1) node (0) {}
    (135:1) node (1) {}
    (225:1) node (2) {}
    (315:1) node[label={315:\(v\)}] (3) {};
    \draw (0) edge (1)
    (2) edge (3);
    \end{tikzpicture}
\end{subfigure}
\hfill
\begin{subfigure}[b]{0.2\textwidth}
    \centering
    \begin{tikzpicture}
    \path (-\radius,-\radius) rectangle (\radius,\radius);
	\path[every node/.append style={circle, fill=black, minimum size=5pt, label distance=2pt, inner sep=0pt}]
    (45:1) node (0) {}
    (135:1) node (1) {}
    (225:1) node (2) {}
    (315:1) node[label={315:\(v\)}] (3) {};
    \draw (0) edge (1)
    (1) edge (2)
    (2) edge (3)
    (3) edge (0);
    \end{tikzpicture}
\end{subfigure}
\hfill
\begin{subfigure}[b]{0.2\textwidth}
    \centering
    \begin{tikzpicture}
    \path (-\radius,-\radius) rectangle (\radius,\radius);
	\path[every node/.append style={circle, fill=black, minimum size=5pt, label distance=2pt, inner sep=0pt}]
    (45:1) node (0) {}
    (135:1) node (1) {}
    (225:1) node (2) {}
    (315:1) node[label={315:\(v\)}] (3) {};
    \draw (0) edge (1) edge (2)
    (1) edge (2) edge (3)
    (2) edge (3)
    (3) edge (0);
    \end{tikzpicture}
\end{subfigure}
        \caption{Regular graphs on 4 vertices.}
        \label{fig:boundpairs}
    \end{figure}
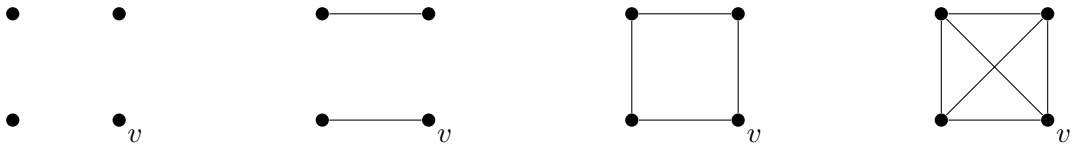
These have automorphism groups of sizes $24$, $8$, $8$ and $24$ respectively. Using Lemma~\ref{lem:combine} and Theorem~\ref{thm:generalasymp} we conclude that the number of graphs on $n$ vertices with a cospectral mate through \GM4-switching is
\begin{align*}
\left(\frac{1}{24}+ \frac{1}{8}+\frac18+\frac1{24}\right)8^{n-4}g_{n-4}(1+o(1)) &= \frac13\frac{2^{\binom{n-4}{2}+3(n-4)}}{(n-4)!}(1+o(1)) \\
&= \frac1{24}n^3g_{n-1}(1+o(1)).
\end{align*}
\end{proof}

\subsubsection{Six vertex switching}

The known switching methods on six vertices are \GM6, \WQH6 and \AH6. 

\begin{corollary}
    The number of graphs on $n$ vertices that have a cospectral mate through a switching method on $6$ vertices  is 
    \begin{itemize}
        \item $\frac{43}{18}22^{n-6}g_{n-6}(1+o(1))$ for \WQH6-switching.
        \item $\frac{43}{180}22^{n-6}g_{n-6}(1+o(1))$  for \GM6-switching.
        \item $\frac{2}{3}16^{n-6}g_{n-6}(1+o(1))$ for \AH6-switching.
    \end{itemize}
\end{corollary}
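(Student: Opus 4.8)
The plan is to apply Theorem~\ref{thm:generalasymp} to each of the three six-vertex switching methods, combining the per-$\Gamma$ counts via Lemma~\ref{lem:combine}. For \WQH6 and \GM6, these correspond to $Q(3,2,1)$ and $Q(3,2,-1)$ respectively, so Lemma~\ref{lem:distinffam} immediately gives that all $(Q,\Gamma)$-switchings are distinguishing and switching-distinguishing (here $a=3\geq 2$ and $(3,2,\pm 1)$ is not of the form $(\text{odd},\text{odd},-1)$). Lemma~\ref{lem:rqinffam} gives $|\mathcal{V}_Q| = 2^2 + \sum_{i=1}^{2}\binom{3}{i}^2 = 4 + 9 + 9 = 22$ in both cases (the $c=1$ branch, and the $c=-1$, $b$ even branch, yield the same value). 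For \AH6, one must separately verify that $(Q_{AH_6},\Gamma)$-switching is distinguishing and switching-distinguishing for each relevant $\Gamma$ — I would do this by exhibiting, for each entry $i\in[6]$, a pair of $Q$-respecting vectors agreeing off $i$ but differing at $i$ (ruling out non-trivial stabilizers), and a triple witnessing switching-distinguishingness, exactly as in the proof of Lemma~\ref{lem:distinffam} — and then compute $|\mathcal{V}_{Q_{AH_6}}|$ directly from the congruence conditions imposed by $Q_{AH_6}$, which should come out to $16$.

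Next I would enumerate the $Q$-switching graphs $\Gamma$ for each method, i.e. the induced subgraphs $B$ on the switching set for which $Q^TBQ$ is again a $(0,1)$ adjacency matrix. For \GM6 and \WQH6 these are constrained by the GM/WQH conditions (the induced subgraph is regular for GM, and satisfies the degree-difference condition for WQH), so the candidate list of graphs on $6$ vertices is short; for \AH6 the two switching graphs are essentially the two drawn in Theorem~\ref{thm:ahswitching}. For each such $\Gamma$, I would compute $|\aut_Q(\Gamma)|$: this is the subgroup of $\aut(\Gamma)$ that preserves $\mathcal{V}_Q$ (equivalently, by the discussion after the definition, it contains the automorphisms fixing $Q$, and one checks whether any further automorphisms of $\Gamma$ happen to preserve the set of $Q$-respecting vectors). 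Summing $1/|\aut_Q(\Gamma)|$ over all $\Gamma$ and multiplying by $|\mathcal{V}_Q|^{n-6}g_{n-6}$ via Lemma~\ref{lem:combine} gives the stated constants: $43/18$ for \WQH6, $43/180$ for \GM6 (the factor-of-$10$ discrepancy reflecting that $Q_{GM_6}$ has fewer symmetries than $Q_{WQH_6}$, so $\aut_Q(\Gamma)$ is correspondingly smaller), and $2/3$ for \AH6. Finally, each method must be shown to produce at least one cospectral mate — this follows from the original references \cite{GMswitching,WQHswitching,AHswitching,switchingpaper}, where explicit examples are given — so the $0$ alternative in Lemma~\ref{lem:lowerbound} does not occur.

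The main obstacle I anticipate is the bookkeeping in the $|\aut_Q(\Gamma)|$ computations, especially for \GM6, where the relevant automorphism group need not equal $\aut(\Gamma)$: one must carefully determine which permutations of the six vertices both fix $\Gamma$ and send $\mathcal{V}_{Q_{GM_6}}$ to itself. Getting the constant $43/180$ exactly right requires correctly matching each $\Gamma$ with its group order and checking that these are the \emph{only} $Q$-switching graphs. A secondary, smaller point of care is the direct computation of $|\mathcal{V}_{Q_{AH_6}}| = 16$ and the distinguishing property for $Q_{AH_6}$, which does not fall under the infinite-family Lemma~\ref{lem:distinffam} and so must be handled by an ad hoc argument on the explicit matrix; I would also double-check the \AH6 constant $2/3$ against the $16^{n-6}$ growth rate to make sure the two switching graphs contribute the claimed total.
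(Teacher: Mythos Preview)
Your overall plan is correct and matches the paper's: apply Theorem~\ref{thm:generalasymp} per $\Gamma$, combine via Lemma~\ref{lem:combine}, and read off $|\mathcal{V}_Q|$ and the distinguishing properties from the infinite-family lemmas. Two points where you diverge from the paper, one substantive and one a sign error in intuition:

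\textbf{AH$_6$ is in the infinite family.} You propose an ad hoc verification of distinguishing, switching-distinguishing, and $|\mathcal{V}_Q|=16$ for $Q_{AH_6}$, saying it ``does not fall under the infinite-family Lemma~\ref{lem:distinffam}''. In fact the paper observes that $Q_{AH_6}=Q(2,3,1)$ (three $2\times2$ blocks rather than two $3\times3$ blocks), so Lemma~\ref{lem:distinffam} applies directly (with $a=2\geq2$ and $(2,3,1)$ not of the excluded form), and Lemma~\ref{lem:rqinffam} gives $|\mathcal{V}_Q|=2^3+\binom{2}{1}^3=16$ with no extra work. Your ad hoc route would succeed, but it is unnecessary.

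\textbf{The factor of $10$ goes the other way.} You attribute the $43/180$ versus $43/18$ discrepancy to ``$Q_{GM_6}$ has fewer symmetries than $Q_{WQH_6}$, so $\aut_Q(\Gamma)$ is correspondingly smaller''. This is backwards on both counts. The GM matrix is fully symmetric in the sense that \emph{every} permutation of $[6]$ preserves $\mathcal{V}_{Q_{GM_6}}$, so $\aut_Q(\Gamma)=\aut(\Gamma)$ is as large as possible; for WQH only permutations respecting the $C_0/C_1$ partition preserve $\mathcal{V}_Q$, so $\aut_Q(\Gamma)$ is typically a proper subgroup. Larger $\aut_Q(\Gamma)$ gives a \emph{smaller} contribution $1/|\aut_Q(\Gamma)|$, which is why GM$_6$ comes out smaller. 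Compounding this, there are only $8$ GM$_6$-switching graphs (the regular graphs on six vertices) but $16$ WQH$_6$-switching graphs. The paper records the four GM$_6$ graphs up to complementation with $|\aut(\Gamma)|\in\{720,48,12,72\}$, giving $2\bigl(\tfrac{1}{720}+\tfrac{1}{48}+\tfrac{1}{12}+\tfrac{1}{72}\bigr)=\tfrac{43}{180}$.
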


\begin{proof}
    The matrices for \WQH6, \GM6 and \AH6-switching are $Q(3,2,1)$, $Q(3,2,-1)$ and $Q(2,3,1)$ respectively, so Lemma~\ref{lem:distinffam} implies that the switching methods are all distinguishing and switching-distinguishing. Lemma~\ref{lem:rqinffam} says that $|\mathcal{V}_Q| = 22$ for \WQH6 and \GM6, and $|\mathcal{V}_Q| = 16$ for \AH6-switching. For each of these methods all $Q$-switching graphs are non-isomorphic. One can check by hand each of the $16, 8$ or $2$ graphs respectively.  
    
    \begin{figure}[H]
        \newcommand{\radius}{1.3}
\begin{subfigure}[b]{0.2\textwidth}
    \centering
    \begin{tikzpicture}[remember picture]
    \path (-\radius,-\radius) rectangle (\radius,\radius);
	\path[every node/.append style={circle, fill=black, minimum size=5pt, label distance=2pt, inner sep=0pt}]
    (0:0.5) node (0) {}
    (60:1) node (1) {}
    (120:1) node (2) {}
    (180:0.5) node (3) {}
    (240:1) node (4) {}
    (300:1) node (5) {};
    \end{tikzpicture}
    \caption{$720$}
\end{subfigure}
\hfill
\begin{subfigure}[b]{0.2\textwidth}
    \centering
    \begin{tikzpicture}
    \path (-\radius,-\radius) rectangle (\radius,\radius);
	\path[every node/.append style={circle, fill=black, minimum size=5pt, label distance=2pt, inner sep=0pt}]
    (0:0.5) node (0) {}
    (60:1) node (1) {}
    (120:1) node (2) {}
    (180:0.5) node (3) {}
    (240:1) node (4) {}
    (300:1) node (5) {};
    \draw (0) edge (3)
    (2) edge (1)
    (4) edge (5);
    \end{tikzpicture}
    \caption{$48$}
\end{subfigure}
\hfill
\begin{subfigure}[b]{0.2\textwidth}
    \centering
    \begin{tikzpicture}
    \path (-\radius,-\radius) rectangle (\radius,\radius);
	\path[every node/.append style={circle, fill=black, minimum size=5pt, label distance=2pt, inner sep=0pt}]
    ((0:0.5) node (0) {}
    (60:1) node (1) {}
    (120:1) node (2) {}
    (180:0.5) node (3) {}
    (240:1) node (4) {}
    (300:1) node (5) {};
    \draw (0) edge (1)
    (1) edge (2)
    (2) edge (3)
    (3) edge (4)
    (4) edge (5)
    (5) edge (0);
    \end{tikzpicture}
    \caption{$12$}
\end{subfigure}
\hfill
\begin{subfigure}[b]{0.2\textwidth}
    \centering
    \begin{tikzpicture}
    \path (-\radius,-\radius) rectangle (\radius,\radius);
	\path[every node/.append style={circle, fill=black, minimum size=5pt, label distance=2pt, inner sep=0pt}]
    (0:0.5) node (0) {}
    (60:1) node (1) {}
    (120:1) node (2) {}
    (180:0.5) node (3) {}
    (240:1) node (4) {}
    (300:1) node (5) {};
    \draw (0) edge (1) edge (2)
    (1) edge (2)
    (4) edge (3) edge (5)
    (5) edge (3);
    \end{tikzpicture}
    \caption{$72$}
\end{subfigure}
        \caption{All \GM6-switching graphs up to complementation and the size of their automorphism group.}
    \end{figure}
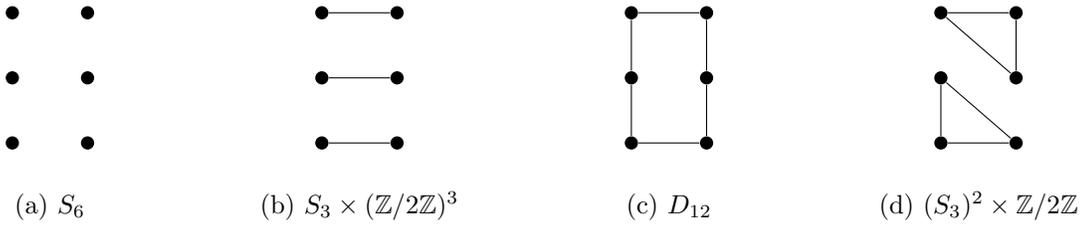
    
    These give sums of $|\aut_Q(\Gamma)|^{-1}$ equal to $\frac{43}{18}, \frac{43}{180}$ and $\frac23$.
    Therefore, Theorem~\ref{thm:generalasymp} gives the bounds given in the statement. 
\end{proof}

Note that $16^{n-6}g_{n-6} = n^6g_{n-2}(1+o(1))$. The proof for \GM4-switching as sketched by Haemers and Spence \cite{enumeration} constructed a graph with \GM4-switching from a graph on $n-1$ vertices. The equality suggests that a similar construction can be done for \AH6 from a graph on $n-2$ vertices. No such construction seems possible for \GM6 and \WQH6, as suggested by the fractions in the formulas. 

\subsubsection{Eight vertex switching}
After six vertex switching the next smallest switching methods from the infinite family are \GM8 and \WQH8. The corresponding matrices are $Q(4,2,1)$ and $Q(4,2,-1)$ respectively. Lemma~\ref{lem:distinffam} implies that the methods are distinguishing and switching-distinguishing and Lemma~\ref{lem:qrespinffam} implies that $|\mathcal{V}_Q| = 72$. We have enumerated the $22$ \GM8-switching graphs and $98$ \WQH8-switching graphs and checked their $|\aut_Q(\Gamma)|$  by computer, see the code at \url{https://github.com/nilsvandeberg/enumeration_switchingmethods.git}.

\begin{corollary}
    The number of graphs on $n$ vertices that have a cospectral mate through a switching method on $8$ vertices is 
    \begin{itemize}
        \item $\tfrac{20213}{576}72^{n-8}g_{n-8}(1+o(1))$ for \WQH8-switching.
        \item $\frac{319}{280}72^{n-8}g_{n-8}(1+o(1))$  for \GM8-switching.
    \end{itemize}
\end{corollary}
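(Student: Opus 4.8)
The plan is to reuse the recipe from the six-vertex case, specialised to $Q(4,2,1)$ for \WQH8 and $Q(4,2,-1)$ for \GM8, and to carry out the combinatorial part by computer. First I would record that $(a,b,c)=(4,2,\pm1)$ satisfies $a=4\geq 2$ and is not of the form $(\text{odd},\text{odd},-1)$, so Lemma~\ref{lem:distinffam} shows that every $(Q,\Gamma)$-switching is distinguishing and switching-distinguishing for $Q\in\{Q(4,2,1),Q(4,2,-1)\}$. Both triples also land in the ``else'' branch of Lemma~\ref{lem:rqinffam} (the other branches require $b$ odd), so
\[
|\mathcal{V}_Q|=2^2+\binom{4}{1}^2+\binom{4}{2}^2+\binom{4}{3}^2=4+16+36+16=72
\]
in both cases. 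Hence Theorem~\ref{thm:generalasymp} already has the shape $\frac{1}{|\aut_Q(\Gamma)|}72^{n-8}g_{n-8}(1+o(1))$ for each $Q$-switching graph $\Gamma$, and all that remains is to enumerate the $\Gamma$'s and compute $|\aut_Q(\Gamma)|$.

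For the enumeration one runs over the graphs $\Gamma$ on $[8]$ and keeps those for which $Q^{\top}A(\Gamma)Q$ is again a symmetric $(0,1)$-matrix with zero diagonal; this produces $\mathcal{B}_Q$, which has $22$ elements for \GM8 and $98$ for \WQH8, all pairwise non-isomorphic. For each such $\Gamma$, the group $\aut_Q(\Gamma)$ is the subgroup of $\aut(\Gamma)$ fixing $\mathcal{V}_Q$ setwise; by Lemma~\ref{lem:qrespinffam} the coordinate permutations preserving $\mathcal{V}_{Q(4,2,c)}$ are exactly those permuting the two blocks $C_1,C_2$ of size $4$ among themselves, i.e.\ the wreath product $S_4\wr S_2$, so $\aut_Q(\Gamma)=\aut(\Gamma)\cap(S_4\wr S_2)$, which is immediate to evaluate once $\aut(\Gamma)$ is known. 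Summing $1/|\aut_Q(\Gamma)|$ over the $22$ (respectively $98$) switching graphs gives $\tfrac{319}{280}$ (respectively $\tfrac{20213}{576}$). Since these switching graphs are pairwise non-isomorphic, Lemma~\ref{lem:combine} applied inductively lets us add the per-$\Gamma$ contributions, and combining this with Theorem~\ref{thm:generalasymp} yields the two stated expressions.

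The main obstacle is purely computational: the enumeration of $22$ and especially $98$ eight-vertex switching graphs together with their automorphism groups is too large to do by hand, so this step is delegated to the computer search whose code is linked above. The only genuinely mathematical points requiring care are that, unlike $Q_{\mathrm{GM}_4}$, the matrices $Q(4,2,\pm1)$ are \emph{not} fully symmetric — so one must really intersect $\aut(\Gamma)$ with $S_4\wr S_2$ rather than take all of $\aut(\Gamma)$ — and that one must check that the listed $22$ and $98$ switching graphs are pairwise non-isomorphic, which is exactly what makes Lemma~\ref{lem:combine} applicable.
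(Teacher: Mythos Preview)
Your overall strategy---applying Lemmas~\ref{lem:distinffam} and~\ref{lem:rqinffam} to verify that the methods are distinguishing and that $|\mathcal{V}_Q|=72$, then enumerating the switching graphs and their $|\aut_Q(\Gamma)|$ by computer and summing via Lemma~\ref{lem:combine}---is exactly what the paper does.

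There is, however, a genuine error in your description of $\aut_Q(\Gamma)$ for \GM8. You assert that the coordinate permutations preserving $\mathcal{V}_{Q(4,2,c)}$ form $S_4\wr S_2$ for both signs of $c$. This is correct for $c=1$ (\WQH8): the weight-two vectors in $\mathcal{V}_{Q(4,2,1)}$ are precisely the $e_i+e_j$ with $i\in C_1$ and $j\in C_2$, and their setwise stabiliser in $S_8$ is indeed $S_4\wr S_2$. But for $c=-1$ (\GM8), Lemma~\ref{lem:qrespinffam} gives the condition $s_1+s_2\equiv 0\pmod 4$, so $\mathcal{V}_{Q(4,2,-1)}$ consists of \emph{all} $(0,1)$-vectors of weight $0$, $4$ or $8$. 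This set is invariant under the full symmetric group $S_8$, hence $\aut_Q(\Gamma)=\aut(\Gamma)$, not $\aut(\Gamma)\cap(S_4\wr S_2)$. The difference is not cosmetic: the empty graph on eight vertices is a \GM8-switching graph with $|\aut(\Gamma)|=|S_8|=40320$, whereas your formula would return $|S_4\wr S_2|=1152$, and similar discrepancies occur for several of the $22$ graphs. Following your recipe literally would therefore not yield $\tfrac{319}{280}$. The paper sidesteps this by computing each $|\aut_Q(\Gamma)|$ directly by machine rather than via an a~priori description of the stabiliser of $\mathcal{V}_Q$; if you want such a description, you need two cases: the stabiliser is $S_4\wr S_2$ for \WQH8 and all of $S_8$ for \GM8.
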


\subsection{Fano switching}  
In this section we perform the enumeration for an orthogonal matrix that is not part of the infinite family, the matrix $Q_\text{Fano}$. Fano switching was introduced in \cite{AHswitching}, and in \cite[~Section 6.2]{switchingpaper} the switching induced by this orthogonal matrix of order $7$ was given an interpretation in terms of the Fano plane (and thus was called \emph{Fano switching}). The two $Q_\text{Fano}$ switching graphs below were identified as the only graphs up to complementation that induced an irreducible switching.\\

\begin{center}
 \(\newcommand{\radius}{1.2}
\begin{tikzpicture}[baseline=(O.base)]
    \path (0,0) node (O) {};
    \draw[thick,backline] (-30:\radius) -- coordinate (P1)
    (90:\radius) -- coordinate (P2)
    (210:\radius) -- coordinate (P3) cycle;
    \draw[thick,backline] (210:\radius) -- (P1) (-30:\radius) -- (P2) (90:\radius) -- (P3);
    \node[draw,thick,backline] at (O) [circle through=(P1)] {};
    \path[every node/.append style={circle, fill=black, minimum size=5pt, label distance=0pt, inner sep=0pt}]
    (O) node[label={[xshift=2pt]90:\(v_6\)}] (6) {}
    (P1) node[label={0:\(v_7\)}] (7) {}
    (P2) node[label={[label distance=3pt,yshift=3pt]180:\(v_4\)}] (4) {}
    (P3) node[label={[label distance=2pt]270:\(v_5\)}] (5) {}
    (-30:\radius) node[label={-40:\(v_3\)}] (3) {}
    (90:\radius) node[label={[label distance=0pt]0:\(v_1\)}] (1) {}
    (210:\radius) node[label={[label distance=0pt]220:\(v_2\)}] (2) {};
    \draw (1) edge[out=-135,in=75] (2)
    (2) edge[out=-15,in=195] (3)
    (3) edge[out=135,in=-15] (4)
    (4) edge (5)
    (5) edge (6)
    (6) edge (7)
    (7) edge (1);
\end{tikzpicture}\quad\text{ or } 
\begin{tikzpicture}[baseline=(O.base)]
    \path (0,0) node (O) {};
    \draw[thick,backline] (-30:\radius) -- coordinate (P1)
    (90:\radius) -- coordinate (P2)
    (210:\radius) -- coordinate (P3) cycle;
    \draw[thick,backline] (210:\radius) -- (P1) (-30:\radius) -- (P2) (90:\radius) -- (P3);
    \node[draw,thick,backline] at (O) [circle through=(P1)] {};
    \path[every node/.append style={circle, fill=black, minimum size=5pt, label distance=0pt, inner sep=0pt}]
    (O) node[label={[xshift=2pt]90:\(v_6\)}] (6) {}
    (P1) node[label={0:\(v_7\)}] (7) {}
    (P2) node[label={[label distance=3pt,yshift=3pt]180:\(v_4\)}] (4) {}
    (P3) node[label={[label distance=2pt]270:\(v_5\)}] (5) {}
    (-30:\radius) node[label={-40:\(v_3\)}] (3) {}
    (90:\radius) node[label={[label distance=0pt]0:\(v_1\)}] (1) {}
    (210:\radius) node[label={[label distance=0pt]220:\(v_2\)}] (2) {};
    \draw (4) edge[out=-45,in=165] (3) edge (5)
    (6) edge (1) edge (3) edge (5)
    (7) edge (1) edge[out=225,in=15] (2) edge (3) edge (4) edge (6);
\end{tikzpicture}\).
\end{center}

It can be checked that these graphs have $|\aut_Q(\Gamma)|$ equal to $7$ and $1$. 

\begin{lemma}\label{lem:Fanoapplicationmainmethod}
Fano switching is distinguishing and switching-distinguishing for all $Q_{\text{Fano}}$ switching graphs.
\end{lemma}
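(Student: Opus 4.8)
The plan is to follow the same strategy as in the proof of Lemma~\ref{lem:distinffam}: exhibit a small collection of explicit $Q_{\text{Fano}}$-respecting vectors that together force any symmetry to be trivial, and separately force any ``switching symmetry'' to be impossible. Concretely, I would first compute the set $\mathcal{V}_Q$ for $Q=Q_{\text{Fano}}$ — or at least enough of it — by solving $Q^T\v\in\{0,1\}^7$ for $\v\in\{0,1\}^7$. Since the columns of $Q_{\text{Fano}}$ have the combinatorial description coming from the Fano plane (each column corresponds to a line, with the signs as in Theorem~\ref{thm:fano}), the condition that $Q^T\v$ be a $(0,1)$-vector should translate, just as in Lemma~\ref{lem:qrespinffam}, into a parity/arithmetic condition on how many ones of $\v$ lie in each line $\ell_i$ of the Fano plane. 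I expect the resulting $Q$-respecting vectors to be: the zero vector, the all-ones vector, the seven characteristic vectors of lines $\ell_i$, the seven characteristic vectors of their complements $\mathcal{O}_i$ (the ``ovals''/hyperovals), and possibly the characteristic vectors of points and of point-complements — matching the four neighbor types in condition (ii) of Theorem~\ref{thm:fano}. I would tabulate a handful of these precisely.

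Next, to show the switching is \emph{distinguishing}, I need: the only permutation $p\in\aut_Q(\Gamma)$ with $p\cdot\v=\v$ for every $\v\in\mathcal{V}_Q$ is the identity. For this it suffices that, for every index $i\in[7]$, there is a pair of $Q$-respecting vectors agreeing everywhere except at position $i$ (so any such $p$ must fix $i$). Two characteristic vectors of distinct lines $\ell_i,\ell_j$ of the Fano plane meet in exactly one point, so $\chi_{\ell_i}$ and $\chi_{\ell_j}$ differ in exactly $|\ell_i\triangle\ell_j|=4$ positions — not quite a single coordinate. But combining line-complements (ovals) with lines gives more flexibility: $\chi_{\ell_i}$ and $\chi_{\mathcal{O}_j}=\1-\chi_{\ell_j}$, or pairs among the point/line vectors, should let me isolate each coordinate as the unique difference. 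If the set I have is not rich enough, I can either add the characteristic vectors of single points (if they are $Q$-respecting) or argue directly on the action of $\aut_Q(\Gamma)$, which by the remark after the definition contains the collineation-induced automorphisms and is therefore a subgroup of $\mathrm{PGL}(3,2)$ of order dividing $168$ intersected with $\aut(\Gamma)$ (which for the first graph has order $7$ and for the second order $1$); faithfulness of a transitive or small group action on a spanning set of $(0,1)$-vectors then follows easily. The cleanest route is probably: show $\mathcal{V}_Q$ spans $\R^7$, hence any permutation matrix fixing all of $\mathcal{V}_Q$ is the identity, giving \emph{distinguishing} for free for \emph{every} $\Gamma$.

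Finally, for \emph{switching-distinguishing} I follow the second half of Lemma~\ref{lem:distinffam}: I exhibit three vectors $\x,\y,\z\in\mathcal{V}_Q$ together with their images $Q^T\x,Q^T\y,Q^T\z$ such that there is a coordinate which is a one in $\x$ and $\y$ but a zero in $\z$, while no coordinate is a one in $Q^T\x$ and $Q^T\y$ but a zero in $Q^T\z$ (or some analogous incidence obstruction that no single permutation $\phi$ can simultaneously satisfy $\v_i=(Q^T\v)_{\phi(i)}$ for all three). Good candidates are $\x=\chi_{\ell}$, $\y=\chi_{\mathcal{O}}=\1-\chi_\ell$, and $\z$ another line or oval characteristic vector; since the switching $v\mapsto Q^Tv$ permutes lines to ovals (that is exactly the content of the Fano switching operation: a vertex adjacent to a line becomes adjacent to the complementary oval), the containment pattern of $\{\x,\y,\z\}$ and $\{Q^T\x,Q^T\y,Q^T\z\}$ will be incompatible. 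Alternatively, invoking the already-proved Lemma stating that any $(Q,\Gamma)$-switching producing a cospectral mate is switching-distinguishing, together with the fact (cited from \cite{AHswitching,switchingpaper}) that Fano switching does produce cospectral mates, would immediately give the switching-distinguishing part — but I would still need the direct argument for \emph{distinguishing}, since that property is not known to follow from producing a cospectral mate. The main obstacle I anticipate is purely computational: correctly identifying $\mathcal{V}_Q$ and the action of $Q^T$ on it from the $7\times7$ matrix, since unlike the infinite family there is no slick block structure — I would double-check this against the computer enumeration mentioned in the paper rather than trusting a hand computation.
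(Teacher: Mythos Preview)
Your outline is sound and, once executed, gives a valid proof; but it remains a plan rather than a proof, and one of your guesses is wrong. The sixteen $Q_{\text{Fano}}$-respecting vectors are exactly $\0$, $\1$, the seven cyclic shifts of $(1,1,0,1,0,0,0)$ (the line vectors), and their complements (the oval vectors); characteristic vectors of single points are \emph{not} $Q$-respecting, so that branch of your speculation should be dropped.

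The paper's proof is more direct than yours: for \emph{distinguishing} it picks three specific line vectors whose only common $1$ is in position $1$, forcing any permutation that fixes $\mathcal{V}_Q$ pointwise to fix that coordinate, and then rotates; for \emph{switching-distinguishing} it observes that the $Q^T$-images of those same three vectors have no common $1$ at all, so no permutation can realise $\v\mapsto Q^T\v$. Your proposed alternatives are genuine and arguably cleaner. The spanning argument works: the seven line vectors form the rows of the Fano incidence matrix $N$, and $NN^T=2I+J$ is positive definite, so $N$ is invertible and $\mathcal{V}_Q$ spans $\R^7$; hence any permutation fixing every $Q$-respecting vector is the identity, giving distinguishing for all $\Gamma$ at once. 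Likewise, deducing switching-distinguishing from the earlier lemma (any $(Q,\Gamma)$-switching that produces at least one cospectral mate is switching-distinguishing) together with the existence of such mates for Fano switching is a legitimate shortcut that the paper does not take here. What your write-up still owes the reader is the actual verification of $\mathcal{V}_Q$ (or at least that the line vectors lie in it) and the rank computation; without those, the argument is incomplete.
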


\begin{proof}
    The $16$ $Q_{Fano}$-respecting vectors are listed in \cite{AHswitching}. Represented as row vectors, these consist of $(0,0,0,0,0,0,0)$, $(1,1
    ,0,1,0,0,0)$ and their rotations and the complements of all these vectors. The first position is the only one that has a $1$ in each of the following vectors 
    \[
    (1,1,0,1,0,0,0), (1,0,1,0,0,0,1) \text{ and } (1,0,0,0,1,1,0), 
    \]
    so any permutation acting trivially on all $Q_\text{Fano}$-respecting vectors needs to fix the first entry. By rotating this triple of vectors we see that the permutation needs to be the identity, so $(Q_\text{Fano},\Gamma)$-switching is distinguishing for any $\Gamma$. After switching these vectors become
    \[
    (0,0,1,0,1,1,0), (0,1,0,1,1,0,0) \text{ and } (0,1,1,0,0,0,1).
    \]
    There is no position with a $1$ in all three of these vectors, so there is no permutation for which all $\v$ are sent to $Q_\text{Fano}^T\v$. Thus we conclude that $Q_\text{Fano}$ is switching-distinguishing for all $Q_{\text{Fano}}$ switching graphs.
\end{proof}

Lemma \ref{lem:Fanoapplicationmainmethod} shows that we can apply Theorem~\ref{thm:generalasymp} to Fano switching. 

\begin{corollary}
    There are 
    \[
    \frac{16}{7}16^{n-7}g_{n-7}(1+o(1))
    \]
    graphs on $n$ vertices that have a cospectral mate through Fano switching.
\end{corollary}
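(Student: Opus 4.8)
The plan is to invoke Theorem~\ref{thm:generalasymp} directly with $Q = Q_{\text{Fano}}$, $m = 7$, aggregating the contributions of the two irreducible $Q_{\text{Fano}}$-switching graphs via Lemma~\ref{lem:combine}. The three inputs the theorem requires are: that Fano switching is distinguishing, that it produces at least one cospectral mate, and the values of $|\mathcal{V}_Q|$ and the $|\aut_Q(\Gamma)|$. The distinguishing and switching-distinguishing properties are exactly Lemma~\ref{lem:Fanoapplicationmainmethod}, so that step is already done. The count $|\mathcal{V}_{Q_{\text{Fano}}}| = 16$ is read off from the list of $16$ $Q_{\text{Fano}}$-respecting vectors (the zero vector, the vector $(1,1,0,1,0,0,0)$, their $7$ cyclic rotations, and the $8$ complements) recorded in \cite{AHswitching} and restated in the proof of Lemma~\ref{lem:Fanoapplicationmainmethod}. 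The two relevant switching graphs $\Gamma$ are the ones displayed just above, with $|\aut_Q(\Gamma)|$ equal to $7$ and $1$ respectively, as already noted in the text; one should double check these by observing that the cyclic group $\mathbb{Z}/7\mathbb{Z}$ generated by $\pi$ preserves the Fano-plane structure and hence $\mathcal{V}_Q$, giving $7 \mid |\aut_Q(\Gamma)|$ for the first graph, while the second (more irregular) graph admits no nontrivial symmetry respecting $\mathcal{V}_Q$.

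Next I would verify the hypothesis that Fano switching produces at least one cospectral mate, which is needed both to apply Theorem~\ref{thm:generalasymp} and (via the Lemma in Section~\ref{sec:prelimswitching}) to conclude it is switching-distinguishing. This follows from \cite{AHswitching}, where an explicit pair of non-isomorphic cospectral graphs obtained through this order-$7$ matrix is exhibited; alternatively, since $\aut_Q(\Gamma)$ acts faithfully on $\mathcal{V}_Q$ (distinguishing) and $Q_{\text{Fano}}$ is switching-distinguishing, the lower bound Lemma~\ref{lem:lowerbound} already guarantees a cospectral mate exists once a single large enough graph with a $(Q_{\text{Fano}},\Gamma)$-switching set exists, which is immediate.

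Then I would combine the two switching graphs using Lemma~\ref{lem:combine} (the two graphs $\Gamma_1, \Gamma_2$ are non-isomorphic, e.g. they have different numbers of edges), obtaining
\[
\left(\frac{1}{7} + \frac{1}{1}\right)|\mathcal{V}_{Q_{\text{Fano}}}|^{\,n-7} g_{n-7}(1+o(1)) = \frac{8}{7}\cdot 16^{\,n-7} g_{n-7}(1+o(1)).
\]
But wait — the factor should be $\tfrac{16}{7}$, not $\tfrac{8}{7}$; this means the two displayed graphs are counted up to complementation, and since Fano switching (being associated with a regular orthogonal matrix) applies equally to a graph and its complement, each unordered pair $\{\Gamma, \overline{\Gamma}\}$ contributes twice, giving $2\left(\tfrac17 + 1\right) = \tfrac{16}{7}$. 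So the careful step is to check that complementation genuinely doubles the count: complementing the whole graph is an isomorphism-respecting bijection between graphs with a $(Q,\Gamma)$-switching and graphs with a $(Q,\overline{\Gamma})$-switching, and $\Gamma \not\cong \overline{\Gamma}$ for both of our two graphs (again an edge-count check), so no over-counting occurs. Substituting $|\mathcal{V}_{Q_{\text{Fano}}}| = 16$ then yields $\tfrac{16}{7} 16^{n-7} g_{n-7}(1+o(1))$, as claimed.

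The main obstacle is the bookkeeping around complementation and the enumeration of the $Q_{\text{Fano}}$-switching graphs: one must be confident that the two displayed graphs (and their complements) really are \emph{all} the switching graphs giving an irreducible switching — this is taken from \cite[Section~6.2]{switchingpaper} — and that the reducible ones contribute nothing new (they are already counted under smaller switching methods, by Definition~\ref{def:reducibility}), so that restricting to these two graphs is legitimate for the irreducible-Fano count. Everything else is a direct substitution into Theorem~\ref{thm:generalasymp} and Lemma~\ref{lem:combine}; the only genuine computation, the $|\aut_Q(\Gamma)|$ values, has already been carried out by hand/computer as recorded above.
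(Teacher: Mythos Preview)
Your proposal is correct and follows essentially the same route as the paper: invoke Lemma~\ref{lem:Fanoapplicationmainmethod} for the distinguishing property, read off $|\mathcal{V}_{Q_{\text{Fano}}}|=16$ from the listed $Q$-respecting vectors, use the computed values $|\aut_Q(\Gamma)|\in\{7,1\}$ for the two displayed switching graphs, double for their complements, and sum via Lemma~\ref{lem:combine} and Theorem~\ref{thm:generalasymp}. The paper's own argument is in fact terser---it simply states that Lemma~\ref{lem:Fanoapplicationmainmethod} allows Theorem~\ref{thm:generalasymp} to be applied---so your explicit bookkeeping around complementation (checking $\Gamma\not\cong\overline\Gamma$ and that $|\aut_Q(\overline\Gamma)|=|\aut_Q(\Gamma)|$) fills in a step the paper leaves implicit.

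One small caution: your alternative route to ``produces at least one cospectral mate'' via Lemma~\ref{lem:lowerbound} is circular, since that lemma only gives ``either $0$ or at least \ldots''. Your primary justification (the explicit example in \cite{AHswitching}, or indeed the nonzero entries in Table~\ref{tab:irredcospectral}) is the right one.
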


\section{Computer enumeration}\label{sec:computer}

Cospectral graphs have been enumerated previously with a complete enumeration up to 12 vertices \cite{cospectral12} as the largest effort. No such enumeration has been done for specific switching methods except by Haemers and Spence for GM-switching \cite{enumeration}. With the variety of different methods that have recently appeared in the literature \cite{WQHswitching,AHswitching,switchingpaper} this gives an incomplete picture. In this section we extend the work by Haemers and Spence to these newer switching methods. This gives a view on the amount of small cospectral graphs that can be obtained by means of each switching method. The code for this enumeration was written in SageMath \cite{sagemath} and ran on a computer cluster with a single core of an Intel Xeon Platinum 8260 CPU (2.4 GHz). The code can be found at \url{https://github.com/nilsvandeberg/enumeration_switchingmethods.git}. Here we give a sketch of our algorithm.\\

The code generates graphs using the block form described in the definition of $\mathcal{T}_n$. The set of $Q$-switching graphs $\mathcal{B}_Q$ is closed under complementation. For a fixed $Q$ and $n$ our approach takes one of each complement pair in $\mathcal{B}_Q$ and generates all adjacency matrices in $\mathcal{T}_n(Q,\Gamma)$. This is achieved by generating all matrices $V$ as a product of elements of $\mathcal{V}_Q$ and combining this with all graphs of order $n-|Q|$. This procedure potentially creates many isomorphic graphs, most of the time $|\aut_Q(\Gamma)|$ of each isomorphism class. In order to diminish the number of isomorphic graphs we deal with some of the $|\aut_Q(\Gamma)|$ symmetries by restricting one of the columns of $V$. In the next step the code checks for each of the generated graphs whether the switched matrix gives a non-isomorphic graph. This creates a list of graphs that have a cospectral mate through the fixed switching. Using canonical labels for each of these graphs, this list is reduced to a list where each isomorphism class of graphs occurs at most once. This approach avoids going through all graphs on $n$ vertices and uses the symmetries that are inherent in the switching method to reduce the number of graphs generated. The largest number of graphs created was 163,577,856 for \GM4-switching and order $10$. This instance took about 1000 hours to check. The numbers in the columns with a dagger in Tables \ref{tab:irredcospectral} and \ref{tab:gmcospectral} are taken from \cite{enumeration}. All the other columns are obtained by running our own code.

\begin{table}[H]
\resizebox{\textwidth}{!}{
    \begin{tabular}{c|| c| c ||c||c | c| c|| c|| c| c}
         order & NDGS$^\dagger$ & GM$^\dagger$ & \GM4 & \AH6 & \GM6 &  \WQH6 & Fano & \GM8 & \WQH8    \\
         \hline
         4 & 0 & 0 & 0 & - & - & - & - & - & - \\
         5 & 0 & 0 & 0 & - & - & - & - & - & -\\
         6 & 0 & 0 & 0 & 0  & 0 & 0 & -& - & - \\
         7 & 40 & 40 & 40 & 0 & 0 & 0 & 0 & - & -\\
         8 & 1166 & 1054 & 1030 & 48 & 48 & 94 & 24 & 0 & 0 \\
         9 & 43,811 & 38,258 & 37070 & 2242 & 2488 & 6212 & 502 & 96 & 96\\
         10 & 2,418,152 & 2,047,008 & 1,977,190 & 96,686 & 131,806 & 407,770  & 12,812 & 6096 & 11,498\\  
    \end{tabular}
    }
    \caption{Number of non-isomorphic graphs that have a cospectral mate per $Q$-switching.}
    \label{tab:irredcospectral}
\end{table}

\begin{table}[H]
\centering
\resizebox{0.85\textwidth}{!}{
    \begin{tabular}{c|| c| c ||c||c | c| c|| c|| c| c}
         order & NDGS$^\dagger$ & GM$^\dagger$ & \GM4 & \AH6 & \GM6 &  \WQH6 & Fano & \GM8 & \WQH8    \\
         \hline
         4 & 0 & 0 & 0 & - & - & - & - & - & - \\
         5 & 0 & 0 & 0 & - & - & - & - & - & -\\
         6 & 0 & 0 & 0 & 0  & 0 & 0 & -& - & - \\
         7 & 0.038 & 0.038 & 0.038 & 0 & 0 & 0 & 0 & - & -\\
         8 & 0.094 & 0.085 & 0.083 & 0.004 & 0.004 & 0.008 & 0.002 & 0 & 0 \\
         9 & 0.160 & 0.139 & 0.135 &  0.008 & 0.009 & 0.023 & 0.002 & 0.000 & 0.000 \\
         10 & 0.201 & 0.171 & 0.165 & 0.008 &  0.011 & 0.034 & 0.001 & 0.001 & 0.001
    \end{tabular}
    }
    \caption{Fraction, rounded to three decimals, of non-isomorphic graphs that have a cospectral mate per $Q$-switching.}
    \label{tab:fracirredcospectral}
\end{table}
 
  From Table~\ref{tab:irredcospectral}, it is clear that \GM4 produces by far the most cospectral mates among these switching methods. More generally we observe that in each row the numbers decrease with the size of the switching set. This matches the intuition, as mentioned at the start of Section~\ref{sec:consequences}. In Table~\ref{tab:compareformula} the formulas from Section~\ref{sec:consequences} are shown side-by-side with the computer results for $n = 10$.

\begin{table}[H]
\resizebox{\textwidth}{!}{
    \begin{tabular}{ l ||c||c | c| c|| c|| c| c}
         Method & \GM4 & \AH6 & \GM6 &  \WQH6 & Fano & \GM8 & \WQH8    \\
         \hline
         Enumeration & 1,977,190 & 96,686 & 131,806 & 407,770  & 12,812 & 6096 & 11,498\\  
         Formula (rounded to nearest integer) & 13,631,488 & 480,597 & 615,573 & 6,155,727 & 37,449 &  11,812 & 363,834\\
         Enumeration/Formula &0.145 &  0.201 & 0.214 &  0.066 & 0.342 & 0.516 &  0.032
    \end{tabular}
    }
    \caption{Comparison of asymptotic formula and exact enumeration for order 10.}
    \label{tab:compareformula}
\end{table}

 The fraction `Enumeration divided by Formula' should be converging to 1 for large $n$. The numbers in the last row of Table~\ref{tab:compareformula} are not close to $1$. We conclude that for $n = 10$  the $o(1)$ term is still significant, especially for the WQH-switching methods. For Fano and \GM8-switching the error is much smaller.

 Even though all considered switching methods are irreducible, there can still be an overlap in the graphs that have a cospectral mate through different switching methods. For example, the size of the intersections between the output lists for GM-switching are enumerated below. Through a quick application of inclusion-exclusion it can be checked that these numbers match the known enumeration results for GM-switching found in \cite{enumeration}.  It was mentioned in Section~\ref{sec:addbounds} that these intersections should be $o(1)$ compared to the enumeration of either switching method. Table~\ref{tab:gmcospectral} shows that in the range of $9$ vertices, the overlap is still significant.

\begin{table}[H]
\resizebox{\textwidth}{!}{
    \begin{tabular}{c||c |c|c | c| c| c| c|c}
         order & GM$^\dagger$ & \GM4 & \GM6 & \GM8 & \GM4 and \GM6 & \GM4 and \GM8 & \GM6 and \GM8 & \GM4, \GM6 and \GM8\\
         \hline
         7 & 40 & 40 & 0 & 0 & 0 & 0 & 0 & 0\\
         8 & 1054 & 1030 & 48 & 0 & 24 & 0 & 0 & 0\\
         9 & 38258 & 37070 & 2488 & 96 & 1336 & 40 & 38 & 18\\
         10 & 2047008 & 1977190 & 131806 & 6098 & 64550 & 2294 & 2388 & 1146
    \end{tabular}
    }
    \caption{Overlap of number of cospectral mates through different GM-switching methods.}
    \label{tab:gmcospectral}
\end{table}

\section{Concluding remarks}\label{sec:conclusion}

This paper provides a general framework to obtain the asymptotically tight number of cospectral mates produced by any switching method for the adjacency matrix. 

The proof of Theorem~\ref{thm:generalasymp} depends strongly on the independence of the entries of the adjacency matrix (the existence of an edge only influences two entries and has no implication for other entries). Thus this approach cannot immediately be applied to existing switching methods for other matrices, such as the Laplacian matrix. Indeed, for the Laplacian matrix, there exists a GM-like switching method where you enforce a stronger so-called GM$^\star$ condition, see \cite{enumeration}. In this case the row sum needs to be constant as well, so the columns of $V$ cannot be chosen independently. Fewer results are known about switching methods for the Laplacian, but the Laplacian appears to be more useful than the adjacency matrix in distinguishing graphs (see for example \cite{enumeration,AA2021}). This pattern could be supported by a version of Theorem~\ref{thm:generalasymp} for the Laplacian. Such a theorem would first require a more rigorous study of switching methods for the Laplacian matrix. A similar comment can be applied to other switching methods for other matrices associated to a graph, such as the distance matrix and the recently developed switching methods for it, see \cite{HEYSSE2017195, friesen2024cospectralconstructiongeneralizeddistance}.

In the case of simple graphs and the adjacency matrix, the formula given by Theorem~\ref{thm:generalasymp} can be applied to any new switching method. This immediately gives an indication of how the new switching method compares to existing switching methods in producing cospectral mates. More ambitiously, Theorem~\ref{thm:generalasymp} can also be used in the context of Haemers' conjecture. Indeed, a bound on the number of switching methods with matrices of level $2$ was used by Wang and Xu to find a family of graphs $\mathscr{H}_n$ of which almost all are determined by the generalized spectrum \cite{level2mats}. One could hope to use Theorem~\ref{thm:generalasymp} to extend this result on $\mathscr{H}_n$ to a larger family by considering higher levels of the orthogonal matrices. Later it was shown that all of $\mathscr{H}_n$ are in fact determined by the spectrum \cite{wangSimple}. This cannot be true for all graphs, but the restriction to the set of controllable graphs, which contains almost all graphs \cite{orourke2016touri}, implies that we only need to consider rational orthogonal matrices \cite{WANGsufficient} for Haemers' conjecture. Using Theorem~\ref{thm:generalasymp} for general bounds requires bounds on the number of $Q$-switching graphs and on $|\mathcal{V}_Q|$. The difficulty in this approach is dealing with all possible different $Q$. These are known and classified for level $2$ \cite{weighing}, but for any larger level a classification of regular orthogonal matrices does not exist. However, if there are some bounds on the number of (regular) orthogonal matrices of fixed sizes (for example derived from the generation of regular rational orthogonal matrices \cite{tang2024generationregularrationalorthogonal}), and the number of $Q$-switching graphs associated to them, then our result could help in tackling Haemers' conjecture through bounds for each switching method.

\subsection*{Acknowledgements}

Aida Abiad is supported by the Dutch Research Council (NWO) through the grants \linebreak VI.Vidi.213.085 and OCENW.KLEIN.475. Nils van de Berg is supported through the grant VI.Vidi.213.085. Robin Simoens is supported by the Research Foundation Flanders (FWO) through the grant 11PG724N.

\printbibliography[heading=bibintoc]

\newpage 
\appendix

\section*{Appendix: WQH-switching where isomorphism does not fix the switching set}

Here we give a construction of graphs where WQH-switching produces an isomorphic graph, but where no isomorphism fixes the switching set. This was done for GM-switching in \cite[Section 3]{aidaiso}. For $\ell \geq 2$ let $N$ be the $\ell \times \ell$-matrix that is zero except for one row of ones. Consider pairs of $k \times \ell$ matrices $V_1, V_2$ such that each column in $V_1$ contains $1 \leq t \leq k-1$ ones and the corresponding column in $V_2$ also has $t$ ones. For any such pair and any adjacency matrix $B$ of order $k$, consider the adjacency matrix
\[
A = \begin{pmatrix}
  O & J & J & V_1 & O & V_2 \\ J & O & O & V_2 & V_1 & O\\ J & O & O & O & V_2 & V_1  \\ V_1^T & V_2^T & O & B & N & N^T \\
    O & V_1^T & V_2^T &  N^T & B & N \\  V_2^T & O & V_1^T & N & N^T & B \\
\end{pmatrix}.
\]
After \WQH{2k} switching on the first two blocks this matrix becomes 
\[
A'= \begin{pmatrix}
  O & J & O & V_1 & O & V_2 \\ J & O & J & V_2 & V_1 & O\\ O & J & O & O & V_2 & V_1  \\ V_1^T & V_2^T & O & B & N & N^T \\
    O & V_1^T & V_2^T &  N^T & B & N \\  V_2^T & O & V_1^T & N & N^T & B \\
\end{pmatrix}.
\]
The obtained graph is isomorphic to the original graph through an isomorphism represented by acting on the blocks with the permutation $(321)(654)$.

Denote the blocks by $(C_i)_{i=1}^6$. Toward a contradiction, suppose that there is an isomorphism $\phi : A \to A'$ that fixes the switching set. There are no edges in $C_1$, so $\phi(C_1)$ must be fully contained in $C_1$ or in $C_2$. In both $A$ and $A'$ the set $C_3$ consists of the only vertices that are attached to all the vertices of one of the blocks and none of the other, so $\phi(C_3) = C_3$ and $\phi$ must swap $C_1$ and $C_2$. Each other block is uniquely determined by which of $C_1,C_2$ or $C_3$ it is not adjacent to. This implies that $C_4$ must be fixed and $C_5$ and $C_6$ must be swapped. The contradiction arises because there is no permutation within the blocks that turns $N$ into $N^T$.


\end{document}